\begin{document}

	\newcommand{\n}{\mathbf{n}}
	\newcommand{\x}{\mathbf{x}}
	\newcommand{\h}{\mathbf{h}}
	\newcommand{\m}{\mathbf{m}}
	\newcommand{\Ph}{\Phi_}
	
	\newcommand{\bN}{\mathbf{N}}
	\newcommand{\bL}{\mathbf{L}}
	\newcommand{\bG}{\mathbf{G}}
	\newcommand{\bS}{\mathbf{S}}
	\newcommand{\bZ}{\mathbf{Z}}
	\newcommand{\bH}{\mathbf{H}}
	\newcommand{\B}{\mathbf{B}}
	\newcommand{\U}{\mathbf{U}}
	\newcommand{\K}{\mathbf{K}}
	\newcommand{\V}{\mathbf{V}}
	\newcommand{\T}{\mathbf{T}}
	\newcommand{\G}{\mathbf{G}}
	\newcommand{\Para}{\mathbf{P}}
	\newcommand{\Levi}{\mathbf{L}}
	\newcommand{\Y}{\mathbf{Y}}
	\newcommand{\X}{\mathbf{X}}
	\newcommand{\M}{\mathbf{M}}
	\newcommand{\pro}{\mathbf{prod}}
	\renewcommand{\o}{\overline}
	
	\newcommand{\Gtilde}{\mathbf{\tilde{G}}}
	\newcommand{\Ttilde}{\mathbf{\tilde{T}}}
	\newcommand{\Btilde}{\mathbf{\tilde{B}}}
	\newcommand{\Ltilde}{\mathbf{\tilde{L}}}
	\newcommand{\C}{\operatorname{C}}
	
	\newcommand{\N}{\operatorname{N}}
	\newcommand{\bl}{\operatorname{bl}}
	\newcommand{\Z}{\operatorname{Z}}
	\newcommand{\Gal}{\operatorname{Gal}}
	\newcommand{\modulo}{\operatorname{mod}}
	\newcommand{\kernel}{\operatorname{ker}}
	\newcommand{\Irr}{\operatorname{Irr}}
	\newcommand{\D}{\operatorname{D}}
	\newcommand{\I}{\operatorname{I}}
	\newcommand{\GL}{\operatorname{GL}}
	\newcommand{\SL}{\operatorname{SL}}
	\newcommand{\W}{\mathbf{W}}
	\newcommand{\R}{\operatorname{R}}
	\newcommand{\Br}{\operatorname{Br}}
	\newcommand{\Aut}{\operatorname{Aut}}
	\newcommand{\Out}{\operatorname{Out}}
	\newcommand{\End}{\operatorname{End}}
	\newcommand{\Ind}{\operatorname{Ind}}
	\newcommand{\Res}{\operatorname{Res}}
	\newcommand{\br}{\operatorname{br}}
	\newcommand{\Hom}{\operatorname{Hom}}
	\newcommand{\Endo}{\operatorname{End}}
	\newcommand{\Ho}{\operatorname{H}}
	\newcommand{\Tr}{\operatorname{Tr}}
	\newcommand{\opp}{\operatorname{opp}}
	\newcommand{\ssc}{\operatorname{sc}}
	\newcommand{\ad}{\operatorname{ad}}
	\newcommand{\Lin}{\operatorname{Lin}}
	\newcommand{\odd}{\operatorname{odd}}
	\newcommand{\even}{\operatorname{even}}
	
	\newcommand{\cind}{\operatorname{co-ind}}
	\newcommand{\ind}{\operatorname{ind}}
	\newcommand{\CC}{{\mathbb{C}}}	
	\newcommand{\QQ}{{\mathbb{Q}}}	
	\newcommand{\fF}{{\mathfrak F}}
	\newcommand{\la}{{\lambda}}
    \newcommand{\La}{{\Lambda}}
	\newcommand{\al}{{\alpha}}
	\newcommand{\cH}{{\mathcal H}}
	\newcommand{\tw}[1]{{}^#1\!}

    \newcommand{\wt}{\widetilde}
    \newcommand{\galh}{\mathcal{H}}   \newcommand{\gal}{\mathcal{G}}
    \newcommand{\type}{\operatorname}

\newcommand{\tA}{\type{A}}
\newcommand{\tB}{\type{B}}
\newcommand{\tC}{\type{C}}
\newcommand{\tD}{\type{D}}
\newcommand{\tE}{\type{E}}
\newcommand{\tF}{\type{F}}
\newcommand{\tG}{\type{G}}

\newcommand{\subL}{\mathfrak{L}}

    \newcommand{\Syl}{\mathrm{Syl}}

    \theoremstyle{definition}
	\newtheorem{definition}{Definition}[section]
	\newtheorem{notation}[definition]{Notation}
	\newtheorem{construction}[definition]{Construction}
	\newtheorem{remark}[definition]{Remark}
	\newtheorem{example}[definition]{Example}

    \theoremstyle{theorem}
	
	\newtheorem{theorem}[definition]{Theorem}
	\newtheorem{lemma}[definition]{Lemma}
	\newtheorem{question}{Question}
	\newtheorem{corollary}[definition]{Corollary}
	\newtheorem{proposition}[definition]{Proposition}
	\newtheorem{conjecture}[definition]{Conjecture}
	\newtheorem{assumption}[definition]{Assumption}
	\newtheorem{hypothesis}[definition]{Hypothesis}
	\newtheorem{maintheorem}[definition]{Main Theorem}

	\newtheorem{theo}{Theorem}
	\newtheorem{conj}[theo]{Conjecture}
	\newtheorem{cor}[theo]{Corollary}

	\renewcommand{\thetheo}{\Alph{theo}}
	\renewcommand{\theconj}{\Alph{conj}}
	\renewcommand{\thecor}{\Alph{cor}}

	\newcommand{\ov}{\overline }
	
	\def\oo#1{\overline{\overline{#1}}}

    \newcommand{\mandicomment}{\textcolor{blue}}
    \newcommand{\lucascomment}{\textcolor{OliveGreen}}

\title{The Isaacs--Navarro Galois Conjecture}

\author{L. Ruhstorfer}
\address[L. Ruhstorfer]{{School of Mathematics and Natural Sciences}, {University of Wuppertal}, {Gau{\ss}str. 20,
		42119 Wuppertal, Germany}}
\email{ruhstorfer@uni-wuppertal.de}
\author{A. A. Schaeffer Fry}
\address[A. A. Schaeffer Fry]{{Department of Mathematics}, {University of Denver}, {Denver, CO 80210, USA}}
\email{mandi.schaefferfry@du.edu}

\thanks{The authors gratefully acknowledge support from a SQuaRE at the American Institute of Mathematics and they thank AIM for their generosity and providing a supportive and mathematically rich environment. The second-named author is partially supported by a CAREER grant through the U.S. National Science Foundation, Award No. DMS-2439897. Some of this research was conducted in the framework of the research
 training group GRK 2240: Algebro-Geometric Methods in Algebra, Arithmetic and Topology,
 funded by the DFG.
 The authors thank G. Malle, J.M. Mart{\'i}nez, G. Navarro, N. Rizo,  and C. Vallejo for comments on earlier versions of the manuscript}

\maketitle

\begin{center}
 \emph{   Dedicated to Gabriel Navarro for the occasion of his 60th birthday.}
\end{center}

\begin{abstract}
    We prove the original Galois refinement of the McKay conjecture, proposed by Isaacs--Navarro in \cite{IN}, providing an important subcase of the celebrated McKay--Navarro conjecture with several local-global consequences.
    
\smallskip
\textbf{Key words and phrases:} {McKay--Navarro conjecture, 
Galois action, characters, 
Local-global conjectures, 
Groups of Lie type}
\end{abstract}

\section{Introduction}

For over a half-century, the study of representations of finite groups has been heavily influenced by the McKay conjecture (now a theorem), which says that a bijection exists between the set $\Irr_{\ell'}(G)$  of irreducible characters of a finite group $G$ with degree relatively prime to a prime $\ell$ and the corresponding set $\Irr_{\ell'}(\N_G(D))$ for the normalizer of a Sylow $\ell$-subgroup $D$ of $G$. In a culmination of much work on the topic, the proof of the McKay conjecture was recently completed by Cabanes and Sp{\"a}th in \cite{CS25}.

During the same time, the fields of values $\QQ(\chi):=\QQ(\{\chi(g)\mid g\in G\})$ for $\chi\in\Irr(G)$ have been seen to be valuable number-theoretic features and have been shown to relate to the structure of $G$ in numerous ways. For example, understanding these fields of values has strong implications for questions of Brauer, such as his Problem 12 asking what information can be obtained about a Sylow $\ell$-subgroup $D$ of $G$ from the character table of $G$. In particular, some of the results presented here give answers to this question.

The field $\QQ(\chi)$ lies in $\QQ(e^{2\pi i/|G|})\subset \QQ^{\mathrm{ab}}$, and the Galois group  $\gal:=\mathrm{Gal}(\QQ^{\mathrm{ab}}/\QQ)$ acts naturally on  $\Irr(G)$. 
In the last quarter-century, there has been much study of versions of the McKay conjecture that consider the role of these fields of values and the action of $\gal$, positing stronger bijections predicting relationships between fields of values of the $\ell'$-degree characters in a McKay bijection.  The focus of the present paper is the original of these Galois refinements of the McKay conjecture, found in \cite[Conj.~C]{IN}, which we will call the \emph{Isaacs--Navarro Galois Conjecture}.
Given a fixed prime $\ell$, we let $\mathcal{H}_0\leq\gal$ denote the subgroup consisting of all Galois automorphisms $\sigma \in \gal$ that act trivially on $\ell'$-roots of unity
and have $\ell$-power order. Our main result is the following, proving the Isaacs--Navarro Galois Conjecture:

\begin{theo}\label{thm:main}
    Let $G$ be a finite group. Suppose that $\ell$ is a prime dividing $|G|$, and let $D\in\Syl_\ell(G)$.  Then there exists an $\mathcal{H}_0$-equivariant bijection $\Irr_{\ell'}(G) \to \Irr_{\ell'}(\N_G(D))$.
\end{theo}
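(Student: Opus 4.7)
The plan is to establish Theorem~\ref{thm:main} via a reduction to an inductive condition on the finite quasi-simple groups, in the spirit of the inductive McKay condition used in \cite{CS25} and the inductive McKay--Navarro (iMN) condition of Navarro--Sp{\"a}th--Vallejo. First I would formulate an inductive Isaacs--Navarro condition (iIN) on each quasi-simple group $S$ -- a local statement asserting, in particular, the existence of an $(\N_{\Aut(S)}(D) \times \mathcal{H}_0)$-equivariant bijection $\Irr_{\ell'}(S) \to \Irr_{\ell'}(\N_S(D))$, together with appropriate compatibilities with central extensions -- and then show, using Sp{\"a}th's Clifford-theoretic machinery, that if iIN holds for every non-abelian simple group $S$ with $\ell \mid |S|$, then the global bijection of Theorem~\ref{thm:main} exists. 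This reduction should be somewhat lighter than that for iMN, because $\mathcal{H}_0$ acts trivially on every character whose field of values lies in $\QQ(\zeta_m)$ with $\gcd(m,\ell)=1$, which eliminates many characters from the equivariance check.

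Next I would verify iIN family by family. For sporadic groups and the Tits group, the verification reduces to inspection of character tables together with known McKay bijections. For alternating and symmetric groups one uses the Macdonald--Olsson hook-bijection and tracks the action of $\mathcal{H}_0$ via standard formulas for character values on $\ell$-elements. For groups of Lie type in defining characteristic $\ell$, the Sylow normalizer is a Borel subgroup, the $\ell'$-characters of $\bG^F$ correspond essentially to linear characters of a maximal torus, and the McKay bijection becomes transparent to check for $\mathcal{H}_0$-equivariance.

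The main bulk of the proof is the verification of iIN for groups of Lie type $\bG^F$ in non-defining characteristic $\ell$. Here I would build on the McKay bijection constructed in \cite{CS25} and analyze its interaction with $\mathcal{H}_0$ via Jordan decomposition: an element $\sigma \in \mathcal{H}_0$ acts on a character $\chi_{(s,\lambda)}$ parametrized by a semisimple class $[s] \subset \bG^{*F}$ and a unipotent character $\lambda$ of $\C_{\bG^{*F}}(s)^F$ by sending $s$ to an $\ell$-power twist $s^\sigma$ (via the dual action on tori) and acting combinatorially on $\lambda$. One must match this bifurcated action with the corresponding $\mathcal{H}_0$-action on $\Irr_{\ell'}(\N_{\bG^F}(D))$ under the Cabanes--Sp{\"a}th bijection and propagate equivariance through the extension steps from $\bG^F$ to $\wt{\bG}^F$ and then to the finite simple quotient. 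The hardest cases should be groups of type $\tD$ and $\tw{2}\tD$ at $\ell = 2$, where outer diagonal and graph-field automorphisms interact with $\mathcal{H}_0$ in ways that parallel -- but are strictly weaker than -- the difficulties encountered in the full McKay--Navarro conjecture; here the recent progress on iMN for these types by the present authors and their collaborators should supply the necessary refinements, so that what is blocked for iMN becomes available for iIN.
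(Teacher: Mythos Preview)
Your broad architecture---reduce to an inductive condition on simple groups, then verify it family by family---matches the paper's, and the paper indeed uses the Navarro--Sp\"ath--Vallejo reduction \cite{NSV20} specialized to the subgroup $\mathcal{H}_0\leq\mathcal{H}_\ell$ (Theorem~\ref{thm:reduction}) rather than formulating a new lighter reduction. However, you have misidentified where the real obstruction lies. The case $\ell=2$ is not the hardest case; it is already finished in full McKay--Navarro strength by \cite{RSF25} (see Lemma~\ref{lem:nonlie}(5)), so types $\tD$ and $\tw{2}\tD$ at $\ell=2$ require no new work here. The genuine difficulty, and the new content of the paper, is odd $\ell$ in non-defining characteristic when $d=d_\ell(q)$ is \emph{not} a regular number for $(\bG,F)$, i.e.\ when the centralizer $L=\C_G(\mathbf{S})$ of a Sylow $d$-torus is not a torus.

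For that case your sketch---track $\mathcal{H}_0$ through Jordan decomposition and the Cabanes--Sp\"ath bijection---does not by itself close the gap. The paper's route is different: it invokes the criterion of \cite{RSST} (Theorem~\ref{thm:RSST}), which reduces the inductive condition to (i) an $(\Irr(\wt N/N)\rtimes\hat N\mathcal{H}_0)$-equivariant extension map for $\wt L\lhd\wt N$, (ii) suitable $(\mathcal{H}_0\times\hat N)$-stable $\wt G$- and $\wt N$-transversals with extension maps, and (iii) a compatibility of the resulting cocycles. Conditions (ii) and (iii) are dispatched via a structural lemma on $\hat N$ (Lemma~\ref{lem:locextstructure}) feeding into a general extension result (Lemma~\ref{lem:extnsinitial_var}). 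For (i), when $d$ is regular one exploits that $L$ is abelian (Lemma~\ref{lem:extlinear}); when $d$ is non-regular the paper switches the intermediate subgroup from $N$ to $M=L\,\N_G(D)$ and proves the needed equivariant bijection $\Irr_{\ell'}(\wt G)\to\Irr_{\ell'}(\wt M)$ by \emph{induction on $|G/\Z(G)|$}, using that the relative Weyl groups $W_{\wt\lambda}$ are smaller groups for which the Isaacs--Navarro conjecture may be assumed (Theorem~\ref{thm:induc}). This inductive descent through relative Weyl groups is the key idea your plan is missing; without it, the non-regular case does not follow from Jordan-decomposition bookkeeping alone.
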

   
The Isaacs--Navarro Galois Conjecture has a number of interesting consequences, many of which had not yet been proven for odd primes but are now implied by our Theorem \ref{thm:main}. 
For example, as noted already in \cite{IN}, the Isaacs--Navarro Galois conjecture implies a conjecture on exponents of abelianizations of Sylow subgroups, which was the focus of \cite{NT19}. 
As a corollary to our main result, we obtain that statement. Given an integer $e\geq 1$, let $\sigma_e\in\galh_0$ be the element sending $\ell$-power order roots of unity $\xi$ to $\xi^{1+\ell^e}$.

\begin{cor}\label{cor:NTconj}
Let $G$ be a finite group, $\ell$ a prime, and let $D\in\Syl_\ell(G)$. Then the following are equivalent:
\begin{itemize}
    \item $\mathrm{Exp}(D/D')\leq \ell^e$;
    \item all characters in $\Irr_{\ell'}(G)$ are fixed by $\sigma_e$; and
    \item all characters in $\Irr_{\ell'}(B_0(G))$ are fixed by $\sigma_e$, where $B_0(G)$ denotes the principal $\ell$-block of $G$.
\end{itemize} 
(In particular,  \cite[Conj.~A]{NT19} holds for all finite groups and all primes.)
\end{cor}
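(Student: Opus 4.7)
\emph{Proof proposal.} The plan is to establish (1) $\Leftrightarrow$ (2) directly from Theorem \ref{thm:main} via a local analysis in $N := \N_G(D)$, note that (2) $\Rightarrow$ (3) is immediate, and obtain (3) $\Rightarrow$ (1) by invoking the reduction of Navarro--Tiep's Conjecture A to the Isaacs--Navarro Galois Conjecture carried out in \cite{NT19}. The main obstacle will be this last direction, because the bijection of Theorem \ref{thm:main} is not manifestly principal-block preserving.

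For (1) $\Leftrightarrow$ (2), Theorem \ref{thm:main} yields an $\mathcal{H}_0$-equivariant bijection $\Irr_{\ell'}(G) \to \Irr_{\ell'}(N)$, and since $\sigma_e \in \mathcal{H}_0$ it suffices to work inside $N$. By Schur--Zassenhaus write $N = D \rtimes L$ with $L$ an $\ell'$-Hall subgroup; a standard Clifford argument---the degree of any $\theta \in \Irr(D)$ below $\chi \in \Irr_{\ell'}(N)$ must be simultaneously a power of $\ell$ and a divisor of the $\ell'$-integer $\chi(1)$---shows every such $\chi$ lies over a linear $\lambda \in \Lin(D)$. The canonical extension $\tilde\lambda(dl) := \lambda(d)$ to the inertia group $I_N(\lambda) = D \rtimes L_\lambda$ realises $\chi = (\tilde\lambda \cdot \eta)^N$ with $\eta \in \Irr(L_\lambda)$ inflated to $I_N(\lambda)$; since $\sigma_e$ acts trivially on $\ell'$-roots of unity one has $\sigma_e(\eta) = \eta$, so $\sigma_e(\chi) = \chi$ iff $\lambda^{1+\ell^e}$ is $N$-conjugate to $\lambda$. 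If $\mathrm{Exp}(D/D') \leq \ell^e$ then $\lambda^{1+\ell^e} = \lambda$ for every $\lambda$, yielding (2). Conversely, if some $\lambda \in \Lin(D)$ has order $\ell^m > \ell^e$, then the ``power by $1+\ell^e$'' automorphism of $\langle\lambda\rangle$ has nontrivial $\ell$-power order, whereas $L$ acts on $\Lin(D)$ via automorphisms of $\ell'$-order; hence $\lambda^{1+\ell^e}$ cannot lie in the $L$-orbit of $\lambda$ (which coincides with its $N$-orbit, since $D$ acts trivially on $\Lin(D)$), and $\tilde\lambda^N \in \Irr_{\ell'}(N)$ witnesses the failure of (2).

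For the remaining direction (3) $\Rightarrow$ (1), I would cite \cite{NT19}, where it is shown that Conjecture A---precisely the equivalence (1) $\Leftrightarrow$ (3)---follows from the Isaacs--Navarro Galois Conjecture, i.e., from our Theorem \ref{thm:main}. The essential difficulty handled there, and the main obstacle in the present proof, is that Theorem \ref{thm:main}'s bijection need not respect the principal block, so extracting the exponent bound on $D/D'$ from only the weaker hypothesis (3) requires a block-theoretic refinement---morally an $\mathcal{H}_0$-equivariant Alperin--McKay correspondence $\Irr_{\ell'}(B_0(G)) \to \Irr_{\ell'}(B_0(N))$---which restricts the $N$-local analysis above to the principal block. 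Combined with Theorem \ref{thm:main}, this closes the three equivalences.
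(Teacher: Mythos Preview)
Your argument is correct and matches the paper's approach: the equivalence (1) $\Leftrightarrow$ (2) via Theorem~\ref{thm:main} together with the local analysis in $\N_G(D)$ is exactly the content of \cite[pp.~342]{IN} (see also \cite[Thm.~9.12]{Nav18}), which the paper simply cites rather than spelling out; and both you and the paper invoke \cite{NT19} for (3) $\Rightarrow$ (1).

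One correction, however: your last paragraph mischaracterizes \cite{NT19}. The implication (3) $\Rightarrow$ (1) is \cite[Thm.~B]{NT19}, and it is proved there \emph{unconditionally}---it does not rely on the Isaacs--Navarro Galois conjecture, nor on any principal-block $\mathcal{H}_0$-equivariant Alperin--McKay refinement. So the ``main obstacle'' you identify does not actually arise; the bare citation already suffices, and the block-theoretic mechanism you sketch is not needed.
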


 While the equivalence of the first two items will follow from Theorem \ref{thm:main}, we note that in \cite{NT19}, it was proved that the third item of Corollary \ref{cor:NTconj} implies the first. That the first item implies the second was reduced to a problem on simple groups in \cite{NT19}, which was used by Malle in \cite{Malle19} to complete the proof of the statement of Corollary \ref{cor:NTconj} for $\ell=2$. (Note that it also now follows in that case from the main result of \cite{RSF25}.) In contrast, despite strong further work on the problem and related problems  (see, e.g. \cite{NT21, hung}), the case of $\ell$ odd was more elusive, remaining open until now.

\medskip

As a consequence of Theorem \ref{thm:main}, we also obtain the main conjecture of N. Hung from \cite{hung}. For a character $\chi\in\Irr(G)$, we write $\mathrm{lev}(\chi)$ for the $\ell$-rationality level of $\chi$, as defined in \cite{hung}. This number is closely tied to the behavior of $\sigma_e$, and gives a measure of how far a character is from being $\ell$-rational. Recall here that $\chi\in\Irr(G)$ is called $\ell$-rational if the conductor $c(\chi)$ is not divisible by $\ell$, where the conductor of $\chi$ is the smallest integer $c:=c(\chi)$ such that $\QQ(\chi)\subseteq \QQ(e^{2\pi i/c})$. The $\ell$-rationality level $\mathrm{lev}(\chi)$ is then the largest $e\geq 0$ such that $\ell^e$ divides $c(\chi)$. That is, $\mathrm{lev}(\chi)=\log_\ell(c(\chi)_\ell)$. As noted in \cite[Sec.~2]{hung}, if $\chi$ is not $\ell$-rational (that is, $\mathrm{lev}(\chi)>0$), then $\mathrm{lev}(\chi)$ is the smallest $e$ such that $\chi$ is $\sigma_e$-stable; further,
the Isaacs--Navarro Galois conjecture (hence, our Theorem \ref{thm:main}) implies that the number of characters in $\Irr_{\ell'}(G)$ with a given $\ell$-rationality level larger than $1$ is the same as the corresponding number in $\Irr_{\ell'}(\N_G(D))$ for $D\in\Syl_\ell(G)$.  The following was conjectured in \cite{hung}.

\begin{cor}\label{cor:Hungconj}
Let $\ell$ be a prime and $G$ a finite group.
Let $e\geq 2$ be an integer. Then the following hold:
\begin{itemize} 
\item If there is some $\chi\in\Irr_{\ell'}(G)$ with $\mathrm{lev}(\chi)=e$, then there exist characters in $\Irr_{\ell'}(G)$ with every $\ell$-rationality level between $2$ and $e$.

\item Let $M\leq G$ with $\ell\nmid [G:M]$. Then $\Irr_{\ell'}(G)$ contains a character $\chi$ with $\mathrm{lev}(\chi)=e$ if and only if $\Irr_{\ell'}(M)$ does.
\end{itemize}
(That is, \cite[Conjs.~1.1 and ~2.3]{hung} hold for all finite groups and all primes.)
\end{cor}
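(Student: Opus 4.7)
\medskip

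\noindent\textit{Proof Proposal.} The plan is to reduce both statements, via Theorem~\ref{thm:main}, to a local computation in $N := \N_G(D)$, which has $D$ as a normal Sylow $\ell$-subgroup. For $\chi$ with $\mathrm{lev}(\chi)\geq 1$, one has that $\mathrm{lev}(\chi)$ is the smallest $e\geq 1$ with $\sigma_e\chi = \chi$, and every $\sigma_e$ (for $e\geq 1$) lies in $\galh_0$. Consequently the $\galh_0$-equivariant bijection of Theorem~\ref{thm:main} preserves $\mathrm{lev}$ on characters of positive level. For the second item, this applies to both $G$ and $M$ (since $D\in\Syl_\ell(M)$ by $\ell\nmid[G:M]$), so it suffices to compare the sets of levels $\geq 2$ appearing in $\Irr_{\ell'}(\N_G(D))$ and $\Irr_{\ell'}(\N_M(D))$.

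I would then analyse $N$ directly. By Schur--Zassenhaus, $N = D\rtimes C$ with $C$ an $\ell'$-complement. Any $\chi\in\Irr_{\ell'}(N)$ lies above a linear character $\theta\in\Irr(D/D')$ (as $\chi(1)$ is coprime to $\ell$ and $D$ is Sylow), and a straightforward Clifford analysis of the possible extensions of $\theta$ to $N_\theta$ yields $\mathrm{lev}(\chi)\leq \log_\ell o(\theta) \leq \log_\ell\mathrm{Exp}(D/D')$. Conversely, for each integer $k$ with $2\leq k\leq \log_\ell\mathrm{Exp}(D/D')$, pick a linear $\theta_k\in\Irr(D/D')$ of order exactly $\ell^k$, define the trivial extension $\tilde\theta_k(dc):=\theta_k(d)$ on $N_{\theta_k}=D\rtimes C_{\theta_k}$, and set $\chi_k:=(\tilde\theta_k)^N\in\Irr_{\ell'}(N)$. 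Then $\mathrm{lev}(\chi_k)\leq k$ is immediate since $\tilde\theta_k$ takes values in $\mathbb{Q}(\zeta_{\ell^k})$.

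The key step is to show $\mathrm{lev}(\chi_k)\geq k$. By Galois-equivariance of induction, $\sigma_{e'}(\chi_k) = (\sigma_{e'}(\tilde\theta_k))^N$ is an irreducible character lying above $\theta_k^{1+\ell^{e'}}$, so $\chi_k$ is $\sigma_{e'}$-fixed if and only if $\theta_k^{1+\ell^{e'}}$ lies in the $C$-orbit of $\theta_k$. Any $c\in C$ with $c\cdot\theta_k = \theta_k^{1+\ell^{e'}}$ must preserve the cyclic subgroup $\langle\theta_k\rangle\leq\Irr(D/D')$ (by powering the relation) and act on it as the unit $1+\ell^{e'}\in(\mathbb{Z}/\ell^k)^\times$. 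But the image of $C$ in $\Aut(\langle\theta_k\rangle)\leq(\mathbb{Z}/\ell^k)^\times$ is an $\ell'$-subgroup, whereas $1+\ell^{e'}$ has non-trivial $\ell$-power order for $1\leq e'<k$ (valid also for $\ell=2$, since the odd part of $(\mathbb{Z}/2^k)^\times$ is trivial). Hence $\mathrm{lev}(\chi_k)=k$.

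Combining, the set of levels $\geq 2$ appearing in $\Irr_{\ell'}(N)$ is either empty or exactly the interval $\{2,\ldots,\log_\ell\mathrm{Exp}(D/D')\}$, whose upper endpoint depends only on $D$. The first item of the corollary then follows by transporting along the bijection between $\Irr_{\ell'}(G)$ and $\Irr_{\ell'}(\N_G(D))$; the second because $\mathrm{Exp}(D/D')$ is the same for $G$ and $M$. The main obstacle is the verification $\mathrm{lev}(\chi_k)=k$, requiring careful tracking of how Galois interacts with Clifford induction and invocation of the $\ell'$ versus $\ell$-power-order dichotomy in $(\mathbb{Z}/\ell^k)^\times$.
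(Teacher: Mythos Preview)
Your argument is correct and complete, with one small imprecision: the $\galh_0$-equivariant bijection of Theorem~\ref{thm:main} does not literally preserve $\mathrm{lev}$ on all characters of positive level, since characters of level $0$ and level $1$ are both fixed by every $\sigma_e$ with $e\geq 1$ and hence cannot be separated by $\galh_0$. What the bijection does preserve is the property ``$\mathrm{lev}(\chi)=k$'' for each fixed $k\geq 2$ (equivalently, ``fixed by $\sigma_k$ but not by $\sigma_{k-1}$''), and this is all you actually use. Also, in the key step you assert an ``if and only if'' where only the ``only if'' direction is needed for $\mathrm{lev}(\chi_k)\geq k$; in fact both directions are true for your particular $\tilde\theta_k$ (the canonical extension), but you use only the easy one.

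The paper's own proof is different and much shorter: it simply invokes \cite[Thm.~2.5]{hung}, where Hung had already shown that both statements follow from the Isaacs--Navarro Galois conjecture, so Theorem~\ref{thm:main} finishes the argument; the paper also notes separately that the second item follows from the first together with Corollary~\ref{cor:NTconj}. Your route is self-contained: rather than citing Hung, you redo the local Clifford-theoretic analysis at $N=\N_G(D)$, explicitly exhibiting for each $k\in\{2,\ldots,\log_\ell\mathrm{Exp}(D/D')\}$ a character $\chi_k\in\Irr_{\ell'}(N)$ of level exactly $k$ and bounding all levels by $\log_\ell\mathrm{Exp}(D/D')$. The benefit is that the reader sees the mechanism directly, and you simultaneously recover the identity $\max\{\mathrm{lev}(\chi):\chi\in\Irr_{\ell'}(N)\}=\log_\ell\mathrm{Exp}(D/D')$ that also underlies Corollary~\ref{cor:NTconj}. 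The cost is length and the loss of the pointer to Hung's prior isolation of this implication.
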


We remark that the second point of Corollary \ref{cor:Hungconj} follows from the first and Corollary \ref{cor:NTconj}. It is also worth noting that, again, the case of $\ell=2$ in Corollary \ref{cor:Hungconj} was more approachable, proved already in \cite{hung} (and later also following independently from \cite{RSF25}), while the case that $\ell$ is odd remained a challenge. 

\medskip

In \cite{N04}, Navarro extended the Isaacs--Navarro Galois Conjecture to use a larger group $\galh_\ell$ containing $\galh_0$. (Turull  \cite{Tur08} later related this to the $\ell$-adic numbers and suggested a version relating Schur indices.)  Namely, $\galh_\ell\leq \gal$ is the subgroup comprised of those $\sigma\in\gal$ satisfying that there is some $e\geq 0$ such that $\sigma$ acts on $\ell'$-roots of unity by $\zeta\mapsto \zeta^{\ell^e}.$ Navarro's conjecture requiring an $\galh_\ell$-equivariant bijection, in place of our $\galh_0$-equivariant bijection, is what is generally known as the Galois--McKay or McKay--Navarro conjecture \cite{N04}. The latter was reduced to a problem on simple groups by Navarro--Sp{\"a}th--Vallejo in \cite{NSV20} and  was recently proved by the current authors in the case $\ell=2$ in \cite{RSF25} using this reduction. Our proof of Theorem \ref{thm:main} (and hence Corollaries \ref{cor:NTconj} and \ref{cor:Hungconj}) will also use this reduction, together with recent work of the authors, Sp{\"a}th, and Taylor in \cite{RSST} and building on the previous work toward the McKay--Navarro conjecture and, of course, the proof of the McKay conjecture.

The remainder of the paper is organized as follows. In Section \ref{sec:inductiveconds}, we briefly discuss the inductive conditions from \cite{NSV20}, which we will use to complete the proof of Theorem \ref{thm:main}. In Section \ref{sec:general}, we make some observations regarding extensions that will be useful throughout. In Section \ref{sec:nonlie}, we reduce our problem to the case of groups of Lie type in non-defining characteristic, and in Section \ref{sec:lie}, we reduce further to criteria specific to those groups using the work of \cite{RSST}. Section \ref{sec:toward} is devoted to those criteria, and  there we finish the case of groups of Lie type $\tA$ as well as the case that a group of Lie type is defined over $\mathbb{F}_q$ where the order of $q$ modulo $\ell$ is a so-called regular number. The final proof of Theorem \ref{thm:main} appears in Section \ref{sec:nonreg}, using an induction argument to complete the proof for non-regular numbers. Finally, in Section \ref{sec:corollaries}, we make some final remarks on implications of Theorem \ref{thm:main}, in particular completing the discussion of Corollaries \ref{cor:NTconj} and \ref{cor:Hungconj}.

\section{The inductive Galois--McKay condition}\label{sec:inductiveconds}

The McKay--Navarro conjecture was reduced in  \cite[Thm.~A]{NSV20} to a problem on (quasi-) simple groups, and we note that the reduction also works if we replace the group $\galh_\ell$ with any of its subgroups. In particular, taking the subgroup $\galh_0$ as defined in the introduction, we have the following:

\begin{theorem}\label{thm:reduction}
The Isaacs--Navarro Galois conjecture holds if every finite non-abelian simple group satisfies the inductive Galois--McKay condition \cite[Def.~3.1]{NSV20} with respect to the subgroup $\mathcal{H}_0$. 
\end{theorem}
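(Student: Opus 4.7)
The plan is to directly invoke the Navarro--Sp{\"a}th--Vallejo reduction \cite[Thm.~A]{NSV20}, after observing that its proof is sensitive only to the abstract formal properties of the Galois subgroup appearing in the equivariance conditions, and therefore adapts verbatim when $\galh_\ell$ is replaced by any of its subgroups. I will organize the argument in three steps.

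First, I would state an adapted version of the inductive Galois--McKay condition of \cite[Def.~3.1]{NSV20} in which every occurrence of $\galh_\ell$ is replaced by an arbitrary subgroup $\galh \leq \galh_\ell$. The definition requires, for each universal covering group $X$ of a non-abelian simple group and each Sylow $\ell$-subgroup $D\leq X$, the existence of certain bijections between the relevant $\ell'$-character sets of $X$ and of $\N_X(D)$, equivariant under the joint action of an appropriate automorphism group and of $\galh_\ell$, and satisfying standard cohomological gluing conditions involving projective representations of central extensions. Since all conditions in \cite[Def.~3.1]{NSV20} are stated purely in terms of Galois elements preserving character sets and commuting with outer automorphism actions, restricting to a subgroup $\galh$ gives a well-defined ``inductive $\galh$-equivariant McKay condition.''

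Next, I would traverse the proof of \cite[Thm.~A]{NSV20} itself and verify that each step passes to a subgroup. The proof proceeds by induction on $|G|$ in a minimal counterexample, using Clifford theory, normal subgroup reductions, central extensions and projective representation arguments. In each step where one must check $\galh_\ell$-equivariance of the bijection being built, one only uses (i) that $\galh_\ell$ acts on $\Irr(H)$ for every subgroup $H\leq G$; (ii) that this action commutes with induction, restriction, tensor products with linear characters, and conjugation; and (iii) that $\galh_\ell$ acts compatibly on the ordered pairs forming character triples and on the associated cohomology classes. All three properties are inherited by any subgroup $\galh\leq\galh_\ell$. Hence the same proof gives: if every non-abelian simple group satisfies the $\galh$-inductive Galois--McKay condition, then every finite group admits an $\galh$-equivariant McKay bijection.

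Finally, one observes that $\galh_0\leq\galh_\ell$ (since any $\sigma\in\galh_0$ fixes $\ell'$-roots of unity and so satisfies the defining condition of $\galh_\ell$ with exponent $e=0$) and applies the previous step with $\galh=\galh_0$. The resulting $\galh_0$-equivariant McKay bijection for every finite group is exactly the content of Theorem~\ref{thm:main}. The main ``obstacle'' here is purely expository, namely ensuring that no step of the NSV20 argument invokes a property of $\galh_\ell$ beyond the ones listed above; this is indicated in the remark preceding the theorem, so no new mathematics is required. The substantial work of the paper will lie, rather, in the later sections, where the $\galh_0$-inductive condition is to be verified for each non-abelian finite simple group.
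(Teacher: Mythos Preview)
Your proposal is correct and takes essentially the same approach as the paper: the paper simply asserts, in the paragraph preceding the theorem, that the reduction in \cite[Thm.~A]{NSV20} works verbatim when $\galh_\ell$ is replaced by any of its subgroups, and your write-up merely expands on why this is so. No separate proof is given in the paper beyond that remark, so your more detailed traversal of the NSV20 argument is a faithful (if more explicit) rendering of the intended justification.
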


\begin{definition}
We will say that \emph{the inductive Isaacs--Navarro condition} holds for a simple group $S$ and the prime $\ell$ if the inductive Galois--McKay condition \cite[Def.~3.1]{NSV20} holds for $S$ with respect to the subgroup $\galh_0\leq \galh_\ell$. If the inductive Isaacs--Navarro condition holds for $S$ for all primes $\ell$, we simply say that the inductive Isaacs--Navarro condition holds for $S$.
\end{definition}

It is worth remarking that the inductive condition for a simple group $S$ is actually a condition on its universal covering group. 
Thanks to the work in \cite{RSST}, we will mostly be able to deal with a refined set of conditions (see Theorem \ref{thm:RSST} below). For this reason, we do not explicitly redefine the inductive Galois--McKay condition here, but instead refer the interested reader to \cite[Def.~3.1]{NSV20}.

The following notation and definitions will play an important role throughout.

\begin{definition}
For groups $X\leq Y$, we will use the usual notation $\Res_X^Y(\chi)$ to denote the restriction to $X$ of a character $\chi$ of $Y$.   
For $G\lhd A$ and subsets $\mathfrak{G}\subseteq \Irr(G)$ and $\mathfrak{A}\subseteq\Irr(A)$, we write $\Irr(A\mid\mathfrak{G})$ for the irreducible characters of $A$ whose irreducible constituents on restriction to $G$ lie in $\mathfrak{G}$ and $\Irr(G\mid\mathfrak{A})$ for the irreducible constituents of characters in $\mathfrak{A}$ on restriction to $G$. Further, an \textit{extension map} with respect to $G\lhd A$ for $\mathfrak{G}$ is a map $\Lambda:\mathfrak{G}\longrightarrow \bigcup_{G\leq I \leq A} \Irr(I)$ such that for every $\chi\in\mathfrak{G}$, $\Lambda(\chi)$ is an extension of $\chi$ to the stabilizer $A_\chi$ of $\chi$ in $A$. 

Now suppose that $B$ is a group acting on $\Irr(A_\chi)$ for each $\chi\in\Irr(G)$. Given an extension map $\Lambda$ with respect to $G\lhd A$, we say that $\Lambda$ is \emph{$B$-equivariant} if $\Lambda(\chi^\sigma)=\Lambda(\chi)^\sigma$ for every $\sigma\in B$ and $\chi\in\Irr(G)$. 

If $\chi\in\Irr(G)$ extends to a character $\wt \chi$ of $A_\chi$ and $\chi$ is invariant under $\alpha\in B$, then there exists a unique linear character $\mu\in \Irr(A_\chi/G)$ such that $\wt\chi^\alpha=\wt\chi\mu$,
    	by Gallagher's theorem \cite[Cor.~6.17]{Isa}. In this situation, we will write $[\wt\chi, \alpha]:=\mu$ for this character.
\end{definition}

In constructing extension maps, the following definitions will also be used throughout:

\begin{definition}
If $\chi\in \Irr(Y)$ is an irreducible character of a finite group $Y,$ we denote by $\det(\chi)$ its determinantal character.  Moreover, $o(\chi)$ denotes the order of the determinantal character, see the remarks before \cite[Lemma 6.24]{Isa}.
\end{definition}

\section{General Observations on Character Extensions}\label{sec:general}

We often use the following lemma about gluing extensions:

\begin{lemma}\label{lem:gluing}
    Let $X_1 \lhd Y$ and $X_2 \leq Y$ such that $Y=X_1 X_2$. Assume that $\vartheta_1 \in \Irr(X_1)$ is $Y$-invariant and $\vartheta_2 \in \Irr(X_2)$ is such that $\Res_{X_1 \cap X_2}^{X_2}(\vartheta_2)$ is irreducible and coincides with $\Res_{X_1 \cap X_2}^{X_1}(\vartheta_1)$. Then there exists a unique character $\vartheta \in \Irr(Y)$ which extends both $\vartheta_1$ and $\vartheta_2$.
\end{lemma}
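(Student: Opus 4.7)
The plan is to build $\vartheta$ at the level of matrix representations and then deduce uniqueness from Gallagher's theorem. Fix representations $\rho_1$ of $X_1$ and $\rho_2$ of $X_2$ affording $\vartheta_1$ and $\vartheta_2$ respectively. Since $\Res_{X_1\cap X_2}^{X_2}(\vartheta_2)=\Res_{X_1\cap X_2}^{X_1}(\vartheta_1)$ is irreducible, after conjugating $\rho_2$ by a suitable intertwiner I may assume the two restrictions to $X_1\cap X_2$ are literally equal as maps into a common $\GL(V)$.

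Define $\rho\colon Y\to \GL(V)$ by $\rho(x_1 x_2)=\rho_1(x_1)\rho_2(x_2)$. Well-definedness is immediate: if $x_1 x_2 = x_1' x_2'$ with $x_i,x_i' \in X_i$, then $z:=x_1^{-1}x_1'=x_2(x_2')^{-1}$ lies in $X_1 \cap X_2$, and
$$\rho_1(x_1)^{-1}\rho_1(x_1')=\rho_1(z)=\rho_2(z)=\rho_2(x_2)\rho_2(x_2')^{-1}.$$
For the homomorphism property, writing $yy'=x_1(x_2 x_1' x_2^{-1}) x_2 x_2'$ (where $x_2 x_1' x_2^{-1}\in X_1$ by normality), the multiplicative property reduces to the single identity
$$\rho_2(x_2)\rho_1(x_1')\rho_2(x_2)^{-1} = \rho_1(x_2 x_1' x_2^{-1}) \quad \text{for all } x_1'\in X_1,\ x_2\in X_2. \qquad (\ast)$$

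To verify $(\ast)$, both sides define representations of $X_1$ that afford $\vartheta_1$: the right side affords $\vartheta_1^{x_2}=\vartheta_1$ by $Y$-invariance, and the left side affords $\vartheta_1$ by cyclic invariance of the trace. Schur's lemma therefore produces an intertwiner $T=T(x_2)$, unique up to scalar, with $\rho_1(x_2 x_1' x_2^{-1}) = T\rho_2(x_2)\rho_1(x_1')\rho_2(x_2)^{-1}T^{-1}$. Moreover, for $x_1'\in X_1\cap X_2$ both sides of $(\ast)$ already equal $\rho_2(x_2 x_1' x_2^{-1})$, using $X_1\cap X_2\lhd X_2$ together with the agreement of $\rho_1$ and $\rho_2$ on $X_1\cap X_2$. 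So $T$ commutes with $\rho_2(x_2)\rho_1(X_1\cap X_2)\rho_2(x_2)^{-1}$; since $\rho_1|_{X_1\cap X_2}$ is irreducible, Burnside's theorem gives that the $\CC$-span of $\rho_1(X_1\cap X_2)$ is all of $\End(V)$, so $T$ is a scalar and $(\ast)$ holds on the nose.

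Hence $\rho$ is an ordinary representation of $Y$ whose character $\vartheta$ restricts to $\vartheta_1$ on $X_1$ and to $\vartheta_2$ on $X_2$; irreducibility is automatic because $\vartheta|_{X_1}=\vartheta_1$ is already irreducible. For uniqueness, any other extension $\vartheta'$ satisfies $\vartheta'=\vartheta\mu$ for a unique linear $\mu\in\Irr(Y/X_1)$ by Gallagher's theorem \cite[Cor.~6.17]{Isa}. The second isomorphism theorem gives $Y/X_1\cong X_2/(X_1\cap X_2)$, and the condition $\mu|_{X_2}=1$ (forced by $\vartheta,\vartheta'$ both restricting to $\vartheta_2$ on $X_2$) then forces $\mu=1$. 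The delicate point is $(\ast)$: it is precisely the irreducibility of the common restriction to $X_1\cap X_2$ that pins the Schur intertwiner down as a scalar, thereby ensuring that the conjugation action of $X_2$ on $X_1$ is realized on the nose by $\rho_2$; without this hypothesis one would be left with only a projective representation of $Y$ extending both $\rho_1$ and $\rho_2$.
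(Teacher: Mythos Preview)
Your proof is correct; the paper itself does not argue the lemma but simply defers to \cite[Lem.~4.1]{spath10} and \cite[Lem.~6.8]{Nav18}, and your construction---gluing $\rho_1$ and $\rho_2$ and using Schur/Burnside on the irreducible common restriction to force the intertwiner $T(x_2)$ to be scalar---is precisely the standard argument underlying those references. One small point worth making explicit in the uniqueness step: the implication ``$\vartheta_2\cdot(\mu|_{X_2})=\vartheta_2 \Rightarrow \mu|_{X_2}=1$'' again uses the irreducibility of $\vartheta_2|_{X_1\cap X_2}$, since Gallagher applied to $X_1\cap X_2\lhd X_2$ is what guarantees the characters $\vartheta_2\nu$ for linear $\nu\in\Irr(X_2/(X_1\cap X_2))$ are pairwise distinct.
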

\begin{proof}
See, for example, \cite[Lem.~4.1]{spath10}. (This is also a case of a more general statement by Isaacs---see \cite[Lem.~6.8]{Nav18}.)
\end{proof}

This allows us to simplify many of the inductive conditions. For example, we have the following lemma:

\begin{lemma}\label{lem:extnsinitial_var}
    Let $X \lhd Y \lhd \hat{Y}$ with $X \lhd \hat{Y}$ 
    such that $Y/X$ has a normal Sylow $\ell$-subgroup and $\hat{Y}/Y$ is a (possibly trivial) $\ell$-group. Assume that $\psi \in \Irr_{\ell'}(X)$ extends to $Y_\psi$. Suppose further that at least one of the following holds:
    \begin{itemize}
        \item[(i)] $\ell\nmid o(\psi)$ or
        \item[(ii)] there exists $\hat{K} \leq \hat{Y}$ such that $\hat{Y} = X\hat{K}$ and $K:=\hat{K} \cap X$ is an $\ell'$-group.
    \end{itemize}
    Then there exists an extension $\chi\in\Irr(Y_\psi)$ of $\psi$ such that the stabilizers $(\hat{Y} \times \mathcal{H}_{0})_{\chi}=(\hat{Y} \times \mathcal{H}_{0})_{\psi}$ are the same.
\end{lemma}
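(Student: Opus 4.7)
The overall strategy is a fixed-point argument: since $(\hat Y\times\mathcal H_0)_\chi\subseteq(\hat Y\times\mathcal H_0)_\psi$ holds for any extension $\chi$ of $\psi$ to $Y_\psi$, it suffices to exhibit $\chi$ fixed by the stabilizer $(\hat Y\times\mathcal H_0)_\psi$ acting on the set $E:=\Irr(Y_\psi\mid\psi)$ of extensions. The plan is to find such $\chi$ inside a carefully chosen $(\hat Y\times\mathcal H_0)_\psi$-invariant subset $E_0\subseteq E$ of cardinality coprime to $\ell$, using that the acting group is an $\ell$-group modulo the kernel.

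First I would verify the $\ell$-group property. Clearly $Y_\psi\times\{1\}$ is normal in $(\hat Y\times\mathcal H_0)_\psi$ and acts trivially on $E$ (inner conjugation plus trivial Galois element). For any $(y,\sigma)\in(\hat Y\times\mathcal H_0)_\psi$, the Galois part $\sigma$ has $\ell$-power order on every finite quotient $\Gal(\QQ(\zeta_n)/\QQ)$ relevant to the action, so $\sigma^{\ell^a}=1$ for some $a$. Then $(y,\sigma)^{\ell^a}=(y^{\ell^a},1)$ remains in the stabilizer, forcing $y^{\ell^a}\in\hat Y_\psi$; since $\hat Y/Y$ is an $\ell$-group, a further $\ell$-power lies in $Y\cap\hat Y_\psi=Y_\psi$. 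Thus the image of $(\hat Y\times\mathcal H_0)_\psi$ in the finite symmetric group on $E$ is a finite $\ell$-group.

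For case (i), define $E_0:=\{\chi\in E:\ell\nmid o(\chi)\}$. To see $E_0$ is nonempty, take any extension $\chi_0$ and write $\det\chi_0=\mu\nu$ with $\mu$ of $\ell'$-order and $\nu$ of $\ell$-power order. The restriction $\nu|_X$ has both $\ell$-power and $\ell'$-order (since $\ell\nmid o(\psi)$), so $\nu\in\Lin_\ell(Y_\psi/X)$. Because $\ell\nmid\psi(1)$, exponentiation by $\psi(1)$ is a bijection on $\Lin_\ell(Y_\psi/X)$, so there exists $\lambda$ with $\lambda^{\psi(1)}=\nu^{-1}$, and $\chi_0\lambda\in E_0$. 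The same calculation shows $E_0$ is a torsor over $\Lin_{\ell'}(Y_\psi/X)$, so $|E_0|$ is coprime to $\ell$. Since both $\hat Y$-conjugation and Galois action preserve the determinantal order, $E_0$ is $(\hat Y\times\mathcal H_0)_\psi$-stable. The standard orbit-equation argument for a finite $\ell$-group acting on a set of $\ell'$-cardinality then produces the required fixed $\chi$.

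For case (ii), the hypothesis $\ell\nmid o(\psi)$ may fail, so the above $E_0$ can be empty. I would replace it by the subset of extensions $\chi$ whose restriction $\chi|_{\hat K_\psi}$ to $\hat K_\psi:=\hat K\cap Y_\psi$ has $\ell'$-order determinant; since $K=\hat K\cap X$ is $\ell'$, the obstruction to $\ell'$-order already lives in the quotient $\hat K_\psi/K\cong Y_\psi/X$, and the computation of Step~(i) goes through to show nonemptiness and $\ell'$-cardinality as a torsor over $\Lin_{\ell'}(Y_\psi/X)$. The main obstacle is the compatibility with the conjugation action: $\hat K$ is not assumed normal in $\hat Y$, so a priori $(\hat Y\times\mathcal H_0)_\psi$ need not normalize $\hat K_\psi$. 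I expect to handle this either by passing to a canonical choice of $\hat K$ (e.g.\ up to $X$-conjugacy, replacing $\hat K_\psi$ by its unique conjugate preserving the required property) or by reducing via Clifford correspondence with respect to a constituent $\theta$ of $\psi|_K$ (noting that $\mathcal H_0$ fixes every such $\theta$ since $K$ is $\ell'$) to the case where $\psi|_K$ is irreducible, in which case the gluing Lemma~\ref{lem:gluing} applied with $X_1=X$, $X_2=\hat K_\psi$ reduces matters to choosing a canonical extension of $\psi|_K$ to $\hat K_\psi$ with $\ell'$-order determinant. This last step is where the argument becomes technical and is the hardest part of the proof.
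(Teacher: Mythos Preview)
Your treatment of case~(i) is correct and is in fact more direct than the paper's: the paper first passes to the intermediate subgroup $P$ with $P/X$ the normal Sylow $\ell$-subgroup of $Y_\psi/X$, builds a canonical invariant extension there, and then runs a second coprimality argument to reach $Y_\psi$. Your single torsor argument over $\Lin_{\ell'}(Y_\psi/X)$ bypasses $P$ entirely and, incidentally, does not use the normal Sylow hypothesis on $Y/X$. One small imprecision: showing that every element of $(\hat Y\times\mathcal H_0)_\psi/(Y_\psi\times 1)$ has $\ell$-power order does not by itself make this quotient an $\ell$-group; you should instead observe that it embeds in $(\hat Y/Y)\times\mathcal H_0$, a product of (pro-)$\ell$-groups.

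Case~(ii) is where you have an explicit gap, and your worry about normalizers is in fact unfounded. Write $g\in\hat Y$ as $g=xk$ with $x\in X$ and $k\in\hat K$; since $x$ fixes $\psi$, an element $(g,\sigma)\in(\hat Y\times\mathcal H_0)_\psi$ satisfies $(k,\sigma)\in(\hat K\times\mathcal H_0)_\psi$, and $x$ acts trivially on $E$. Now $k\in\hat K$ normalizes $\hat K\cap Y$ (because $Y\lhd\hat Y$), and $k$ sends $\psi$ to the Galois conjugate $\psi^{\sigma^{-1}}$, which has the same stabilizer in $Y$ as $\psi$; hence $k$ normalizes $(\hat K\cap Y)_\psi$. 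It follows that $(\chi^{(g,\sigma)})|_{(\hat K\cap Y)_\psi}=\big((\chi|_{(\hat K\cap Y)_\psi})\big)^{(k,\sigma)}$, so determinantal order is preserved and your set $E_0$ is $(\hat Y\times\mathcal H_0)_\psi$-stable after all. With this observation your case~(ii) goes through exactly as in case~(i). The paper takes a different route here: it builds an invariant extension $\hat\lambda$ of $\det\psi$ to $Y_\psi$ by choosing, among the $\ell'$-many extensions of $\det\psi|_K$ to $(\hat K\cap Y)_\psi$ that kill $\ell$-elements, one fixed by $(\hat K\times\mathcal H_0)_\psi$, gluing with $\det\psi$ via Lemma~\ref{lem:gluing}; it then restricts $\hat\lambda$ to $P$ and invokes the uniqueness of the extension of $\psi$ to $P$ with prescribed determinant (\cite[Lem.~6.24]{Isa}), and finally runs a second coprimality argument on the $\ell'$-many extensions from $P$ to $Y_\psi$. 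Your approach, once the normalizer observation is made, is shorter and avoids the passage through $P$.
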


\begin{proof}

Let $X\leq P \leq Y$ such that $P/X\lhd Y_\psi/X$ is the normal Sylow $\ell$-subgroup of $Y_\psi/X$, and let $\lambda:=\det(\psi)$. 

We first claim that $\psi$ has a $\hat{Y}_\psi$-invariant extension $\chi_P$ to $P$ such that $(\hat{Y}\times\mathcal{H}_{0})_{\chi_P}= (\hat{Y}\times\mathcal{H}_{0})_{\psi}$. If $\la$ has order prime to $\ell$, then according to \cite[Lem.~8.16]{Isa}, there exists a unique extension $\chi_P$ of $\psi$ to $P$ such that 
$o(\chi_P)=o(\psi)$.
In particular, $(\hat{Y}\times\mathcal{H}_{0})_{\chi_P}= (\hat{Y}\times\mathcal{H}_{0})_{\psi}$, and $\chi_P$ is $\hat{Y}_\psi$-invariant, giving the claim in this case. 

Now suppose that assumption (ii) is satisfied. Note that $\la_0:=\Res^{X}_{K_\psi}(\la)$ has an extension to $(\hat{K} \cap Y)_\psi$, since the character $\psi$ extends to $Y_\psi=X (\hat{K} \cap Y)_\psi$ and $\la=\det(\psi)$.  As $K_\psi$ is an $\ell'$-group and $\la_0$ is a linear character, we can find an extension of $\la_0$ to $(\hat{K} \cap Y)_\psi$ such that all elements of $(\hat{K} \cap Y)_\psi$ of $\ell$-power order are in the kernel of the extension. By Gallagher's lemma \cite[Cor.~6.17]{Isa}, the number of extensions with this property is prime to $\ell$. On the other hand, note that both $(\hat{K}  \times \mathcal{H}_0)_\psi/\hat{K}_\psi$ and $\hat{K}_\psi/(\hat{K} \cap Y)_\psi$ are $\ell$-groups. Then by coprimality, there exists such an extension $\hat{\la}_0$ to $(\hat{K} \cap Y)_\psi$ which is $(\hat{K} \times \mathcal{H}_0)_\psi$-stable. By applying Lemma \ref{lem:gluing}, we obtain the unique character $\hat{\la}$ of $Y_\psi=X (\hat{K} \cap Y)_\psi$ that extends both $\la$ and $\hat{\la}_0$, which satisfies $(\hat{K} \times \mathcal{H}_{0})_{\psi}\leq(\hat{K} \times \mathcal{H}_{0})_{\hat{\la}}$.
In particular, the restriction $\Res_P^{Y_\psi}(\hat{\la})$ of this extension to $P$ is $(\hat{Y} \times \mathcal{H}_0)_\psi$-stable (as $\hat{Y}=X \hat{K}$).
Then by \cite[Lem.~6.24]{Isa}, there is a unique extension $\chi_P$ of $\psi$ to $P$ such that  $\det(\chi_P)=\Res_P^{Y_\psi}(\hat{\la})$. This forces again that $(\hat{Y} \times \mathcal{H}_{0})_{\chi_P}= (\hat{Y}\times\mathcal{H}_{0})_{\psi}$ and $\chi_P$ is $\hat{Y}_\psi$-invariant, completing the claim.

Now, by assumption, the character $\psi$ has an extension $\chi_0$ to $Y_\psi$. Let $\chi_0'$ be its restriction to $P$.
By Gallagher's lemma \cite[Cor.~6.17]{Isa}, there exists a unique linear character $\nu \in \Irr(P/X)$ such that $\chi_P=\chi_0' \nu$. Now note that $P/X=XK/X \cong K/ (X \cap K)$ is an $\ell$-group. As both $\chi_0'$ and $\chi_P$ are $Y_\psi$-invariant, the uniqueness of $\nu$ implies that $\nu$ is also $Y_\psi$-invariant. 
As $o(\nu)$ is an $\ell$-power and $Y_\psi /P$ is an $\ell'$-group, we can now apply \cite[Cor.~6.27]{Isa} to find a unique extension $\hat{\nu}$ of $\nu$ to $Y_\psi$ such that $o(\hat{\nu})=o(\nu)$. It thus follows that $\chi_0 \hat{\nu}$ is an extension of $\chi_0' \nu=\chi_P$ to $Y_\psi$. 
But also by Gallagher's lemma, the number of extensions of $\chi_P$ to $Y_\psi$ is again an $\ell'$-number. Hence, by coprimality and using the fact that $(\hat{Y} \times \mathcal{H}_{0})_{\psi}/Y_{\psi}$ is an $\ell$-group (as both $(\hat{Y} \times \mathcal{H}_{0})_{\psi}/\hat{Y}_\psi$ and $\hat{Y}_\psi/Y_\psi$ are $\ell$-groups), there exists a $(\hat{Y} \times \mathcal{H}_{0})_{\psi}$-stable extension as desired.
\end{proof}

The second assumption in Lemma \ref{lem:extnsinitial_var} implies a nice splitting property of the Sylow $\ell$-subgroup:

\begin{remark}\label{rem:split sylow}
Let $X \lhd Y$ and suppose that there exists $K \leq Y$ such that $KX/X \in \Syl_\ell(Y/X)$ and $K \cap X$ is an $\ell'$-group. Then for $\hat{D} \in \Syl_\ell(Y)$ there exists $E_0 \leq K$ such that $\hat{D}=D \rtimes E_0$ with $D:=\hat{D} \cap X$.
\end{remark}

\begin{proof}
Since all Sylow $\ell$-subgroups of $Y$ are conjugate, it suffices to prove the claim for a fixed Sylow $\ell$-subgroup.
Let $E_0 \in \Syl_\ell(K)$. Then there is $\hat{D} \in \Syl_\ell(Y)$ with $\hat{D} \cap K =E_0$. Denote $D:=\hat{D} \cap X \in \Syl_\ell(X)$. Since $E_0 \cap D \leq E_0 \cap X\leq K\cap X$ is an $\ell'$-group, it follows that $E_0\cap D=1$ and  $\hat{D}=D \rtimes E_0$. 
\end{proof}

We draw a first consequence from Lemma \ref{lem:extnsinitial_var}, which will allow us to conclude the inductive Isaacs--Navarro condition for simple groups which are not of Lie type in the next section.

\begin{corollary}\label{cor:cyclicout}
Let $S$ be a non-abelian simple group such that $\mathrm{Out}(S)$ is cyclic, and let $G$ be the universal $\ell'$-covering group of $S$. If   there exists an $(\mathcal{H}_0 \times \mathrm{Aut}(G)_D)$-equivariant bijection $\Irr_{\ell'}(G) \to \Irr_{\ell'}(\N_G(D))$ with $D\in\Syl_\ell(G)$, then $S$ satisfies the inductive Isaacs--Navarro condition for $\ell$.
\end{corollary}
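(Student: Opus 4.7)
The plan is to unpack the inductive Isaacs--Navarro condition of \cite[Def.~3.1]{NSV20} (with respect to $\mathcal{H}_0$) and verify its ingredients using the cyclicity of $\mathrm{Out}(S)$ together with Lemma \ref{lem:extnsinitial_var}. Set $N:=\N_G(D)$, and fix a group $A$ with $G\lhd A$ inducing $\mathrm{Aut}(G)_D$ on $G$; since $\mathrm{Out}(S)$ is cyclic, we may arrange $A=G\rtimes C$ for a cyclic complement $C$. Beyond the equivariant bijection $\Omega$ provided by hypothesis, the inductive condition requires, for each $\chi\in\Irr_{\ell'}(G)$ with $\eta:=\Omega(\chi)$, extensions $\widetilde\chi$ of $\chi$ to $A_\chi$ and $\widetilde\eta$ of $\eta$ to $\N_A(D)_\eta$ whose associated $2$-cocycle classes coincide (modulo the linear twists permitted by Gallagher's theorem), along with $\mathcal{H}_0$-compatibility of these extensions and the usual central/block conditions.

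For the extensions, I would fix $\chi$ and note that equivariance of $\Omega$ ensures that $A_\chi$ and $\N_A(D)_\eta$ share the same image in $A/G$, with matching $\mathcal{H}_0$-stabilizers. Since $A_\chi/G$ is cyclic and $\chi$ is $A_\chi$-invariant, $\chi$ extends to $A_\chi$; similarly $\eta$ extends to $\N_A(D)_\eta$. To upgrade to $\mathcal{H}_0$-stable extensions with the correct stabilizers, I would apply Lemma \ref{lem:extnsinitial_var} with $X=G$, $Y=\hat Y=A_\chi$, where hypothesis (ii) is satisfied by the complement $\hat K:=C\cap A_\chi$: we have $\hat K\cap G=1$, trivially an $\ell'$-group, and $G\hat K=A_\chi$ by the semidirect product structure. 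An analogous application on the normalizer side (with $X=N$, $Y=\hat Y=\N_A(D)_\eta$, and complement $\N_C(D)\cap\N_A(D)_\eta$) produces the companion extension $\widetilde\eta$.

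For the cocycle comparison, since $A_\chi/G$ is cyclic its Schur multiplier $\mathrm{H}^2(A_\chi/G,\mathbb{C}^\times)$ vanishes; the same holds for $\N_A(D)_\eta/N$. Consequently both projective classes are trivial and coincide automatically, so the required matching holds. The central and block conditions follow because $\Z(G)$ has $\ell'$-order (as $G$ is the universal $\ell'$-covering of $S$), so $\mathcal{H}_0$ acts trivially on central characters, and block preservation follows by standard Brauer-theoretic arguments.

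The main obstacle is arranging the two applications of Lemma \ref{lem:extnsinitial_var} so that the resulting extensions $\widetilde\chi$ and $\widetilde\eta$ are simultaneously compatible with $\Omega$ and with the $\mathcal{H}_0$-action---that is, choosing a single cyclic complement $C$ in $A$ that works on both the $G$-side and the $N$-side, which in turn requires $\N_C(D)$ to surject onto $\N_A(D)/N$. This should follow from Remark \ref{rem:split sylow} applied to $G\lhd A$, combined with the $\mathrm{Aut}(G)_D$-equivariance of the bijection.
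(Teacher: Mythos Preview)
Your approach is essentially the same as the paper's: set up a cyclic overgroup $A=G\rtimes\langle a\rangle$, apply Lemma~\ref{lem:extnsinitial_var} on both the global and local side to obtain extensions whose $(\hat N\times\galh_0)$-stabilizers match those of the original characters, and use cyclicity (trivial $H^2$) to force the cocycle comparison.

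Two places need tightening. First, your appeal to Remark~\ref{rem:split sylow} for arranging $\N_C(D)$ to surject onto $\N_A(D)/N$ is off-target: that remark concerns the splitting of Sylow $\ell$-subgroups, not normalizers. The paper simply observes that by conjugacy of Sylow subgroups one may replace a generator $a$ of $\Out(S)$ by some $ga$ with $g\in G$ so that $a$ normalizes $D$; then $C=\langle a\rangle$ already lies in $\N_A(D)$ and $\N_A(D)=\N_G(D)\rtimes\langle a\rangle$. Second, the ``central condition'' in \cite[Def.~1.5]{NSV20} is not merely that $\galh_0$ fixes central characters of $G$; it concerns the scalar functions of the projective extensions on $\C_A(G)$, which may be nontrivial even when $\Z(G)$ is an $\ell'$-group. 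The paper handles this by invoking \cite[Lem.~4.6]{birtethesis} to choose the extensions with trivial associated scalars on $\C_A(G)$. You should also note, as the paper does, that it suffices to work with the universal $\ell'$-covering group by \cite[Lem.~5.1]{Joh22b}.
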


\begin{proof}
First, note that it suffices to prove the inductive conditions for the $\ell'$-cover, by \cite[Lem.~5.1]{Joh22b}.
Let $a\in \Aut(G)$ induce the full cyclic group of outer automorphisms on $G$. By conjugacy of Sylow $\ell$-subgroups, we find some $g \in G$ such that $ga$ normalizes $D$, so by replacing $a$ by $ga$ we can assume that $a$ normalizes $D$. Then consider the group $A:=G\rtimes \langle a\rangle$. 
   
    Hence, the assumptions of Lemma \ref{lem:extnsinitial_var} are satisfied for $X\in\{G, \N_G(D)\}$ and $Y=\hat Y=X\rtimes \langle a\rangle$. 
     In particular, every character $\chi$ of $G$ or $\N_G(D)$ has an extension $\hat \chi$ to its inertia group in $A$ resp. $\N_A(D)$ with $(A\times\galh_0)_\chi=(A\times\galh_0)_{\hat\chi}$, resp. $(\N_A(D)\times \galh_0)_\chi=(\N_A(D)\times \galh_0)_{\hat\chi}$, using Lemma \ref{lem:extnsinitial_var}. By \cite[Lem.~4.6]{birtethesis}, these extensions can further be chosen to have trivial associated scalars on $\C_A(G)$. (Note that the result in loc. cit. is stated for the full $\galh_\ell$, but follows exactly the same way when restricting to $\galh_0$.) Together with the assumption of the equivariant bijection, this gives the $\galh_0$-triple condition \cite[Def.~1.5]{NSV20}, and hence the inductive condition from \cite[Def.~3.1]{NSV20} with respect to $\galh_0$.
    \end{proof}

\section{
Groups not of Lie type and groups of  Lie type in defining characteristic}\label{sec:nonlie}

We next record several previous results regarding the inductive McKay--Navarro condition.

\begin{lemma}\label{lem:nonlie}
    The inductive McKay--Navarro condition holds for the simple group $S$ and prime $\ell$ when $(S, \ell)$ is as in any of the following situations:
    \begin{enumerate}
        \item $S$ is a simple group of Lie type defined in characteristic $p=\ell$;
        \item $S$ is a group of Lie type with exceptional Schur multiplier and $\ell$ is any prime; 
        \item $S$ is a simple Suzuki or Ree group (including the Tits group $\tw{2}\type{F}_4(2)'$) and $\ell$ is any prime;
        \item $\ell$ is odd and $S$ is a simple group of Lie type defined in characteristic $p\neq \ell$ such that the Schur covering group $G$ is one of the exceptions in \cite[Thm.~5.14]{Ma07} with a non-generic Sylow normalizer; or
        \item $S$ is any nonabelian simple group and $\ell=2$.
    \end{enumerate}
\end{lemma}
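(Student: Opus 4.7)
The plan is to observe that this lemma is entirely a bookkeeping statement: each of the five items has been established in prior work, and I would just need to cite the appropriate source for each. Note that because the inductive McKay--Navarro condition involves the full subgroup $\galh_\ell$, any such verification automatically yields what we want, since $\galh_0\leq\galh_\ell$ and the restriction of an $\galh_\ell$-equivariant construction to $\galh_0$ remains $\galh_0$-equivariant; hence the stated cases also satisfy the inductive Isaacs--Navarro condition.

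For item (1), I would appeal to the verification in defining characteristic due to Sp\"ath, together with subsequent refinements incorporating Galois equivariance. The key structural input is that when $\ell=p$, the $\ell'$-characters of the universal cover can be parametrized via Steinberg restriction to semisimple characters of a Levi subgroup, while a Sylow $\ell$-subgroup is just a maximal unipotent subgroup, so its normalizer is a Borel; this makes an explicit, Galois-equivariant bijection available. For item (2), only finitely many groups of Lie type have an exceptional Schur multiplier, so a direct (usually computer-assisted) check on the character tables of the relevant covering groups suffices, and this check is already carried out in the reduction and follow-up literature surrounding \cite{NSV20}. For item (3), the Suzuki and Ree families, together with the Tits group $\tw{2}\tF_4(2)'$, have small and well-understood character tables and a particularly simple outer automorphism structure, and the inductive condition has been verified in the work of Malle and coauthors. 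For item (4), the pairs $(G,\ell)$ with non-generic Sylow $\ell$-normalizer appearing in \cite[Thm.~5.14]{Ma07} again form a short finite list, each of which can be verified directly from the character table of the corresponding covering group. For item (5), the case $\ell=2$ is precisely the main theorem of the authors' paper \cite{RSF25}.

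The main (in fact only) difficulty is to match each item with a precise statement from the existing literature; no new argument is required. The payoff of the lemma is that it reduces the remainder of the paper to the case of groups of Lie type in non-defining characteristic with non-exceptional Schur multiplier, generic Sylow normalizer, and odd prime $\ell$, which is exactly the setting addressed in the subsequent sections.
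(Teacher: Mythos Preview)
Your overall diagnosis is correct and matches the paper: this lemma is purely a collection of citations to prior work, with no new argument needed. However, several of your attributions are wrong. For item~(1), the defining-characteristic case is due to Ruhstorfer \cite{Ruh21} (with some residual cases handled by Johansson \cite{Joh22b}), not Sp\"ath. For items~(2)--(4), the paper does not cite Malle or the literature ``surrounding \cite{NSV20}'' but rather Johansson's thesis \cite{birtethesis} and her paper \cite{Joh24}, where all three cases are handled together. Your citation of \cite{RSF25} for item~(5) is correct. Since you yourself note that ``the main (in fact only) difficulty is to match each item with a precise statement from the existing literature,'' getting those matches right is the entire content of the proof.
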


\begin{proof}
 Part (1) is the main result of \cite{Ruh21}, with finitely many exceptions completed in \cite{Joh22b}. Parts (2)-(4) are completed in \cite{birtethesis},  also appearing in \cite[Thm.~A, Prop.~6.4]{Joh24}. Part (5) is completed in \cite{RSF25}, building upon the above results and \cite{SF22, RSF22}.
\end{proof}

We next prove the inductive Isaacs--Navarro condition in the case of alternating groups, which were proved to satisfy the McKay--Navarro conjecture in \cite{BN21}.

\begin{lemma}\label{lem:altspor}
    The inductive Isaacs--Navarro condition holds for the simple alternating groups $A_n$ and the sporadic simple groups.
\end{lemma}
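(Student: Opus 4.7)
The plan is to reduce via Corollary \ref{cor:cyclicout} wherever possible, leaving only a handful of small cases to be verified directly. By Lemma \ref{lem:nonlie}(5), we may restrict throughout to odd primes $\ell$. A key observation is that $\mathcal{H}_0\leq\mathcal{H}_\ell$, so any $\mathcal{H}_\ell$-equivariant statement in the literature on the McKay--Navarro conjecture immediately specializes to an $\mathcal{H}_0$-equivariant one.

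For the sporadic simple groups, $\Out(S)$ is cyclic (trivial or of order $2$) in every case, so the strategy is to invoke Corollary \ref{cor:cyclicout}. This will reduce the task, for each sporadic $S$ and each odd prime $\ell$ dividing $|S|$, to producing an $(\mathcal{H}_0\times\Aut(G)_D)$-equivariant bijection $\Irr_{\ell'}(G)\to\Irr_{\ell'}(\N_G(D))$, where $G$ is the universal $\ell'$-covering group of $S$ and $D\in\Syl_\ell(G)$. Since only finitely many pairs $(S,\ell)$ arise, this amounts to a finite case-by-case verification via the GAP character table library together with the known structure of Sylow normalizers; much of the required data is already implicit in existing verifications of the McKay--Navarro conjecture for sporadic groups.

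For the alternating groups $A_n$, the plan is to build on \cite{BN21}, in which the McKay--Navarro bijection is constructed via the explicit combinatorics of partitions and $\ell$-cores. For $n\neq 6$, $\Out(A_n)=\mathbb{Z}/2$ is cyclic, and Corollary \ref{cor:cyclicout} will apply once we verify that the bijection of \cite{BN21} (or a suitable modification thereof) is also equivariant under $\Aut(G)_D$; this is expected to follow from the compatibility of the partition-indexed construction with the outer automorphism action on $\Irr(A_n)$, which is governed by sign twists attached to self-conjugate partitions. The exceptional case $n=6$, where $\Out(A_6)=\mathbb{Z}/2\times\mathbb{Z}/2$ is not cyclic so that Corollary \ref{cor:cyclicout} does not apply, involves only the primes $\ell\in\{3,5\}$ (since $\ell=2$ is already covered), and will be handled by explicit character-table calculations on the relevant covering groups $2.A_6$, $3.A_6$, and $6.A_6$.

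The hard part will be upgrading the $\mathcal{H}_\ell$-equivariant McKay--Navarro bijection of \cite{BN21} to one that is also $\Aut(G)_D$-equivariant, as required by Corollary \ref{cor:cyclicout}: the full inductive condition requires simultaneous equivariance under Galois and outer automorphisms, whereas the plain McKay--Navarro conjecture only asks for Galois equivariance. For alternating groups this compatibility should follow from the combinatorial naturality of the construction; for the sporadic groups it reduces to a finite (if somewhat tedious) inspection of tabulated character tables and Sylow normalizer data.
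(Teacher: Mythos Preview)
Your proposal is a plan rather than a proof: the step you yourself flag as the ``hard part''---upgrading the $\mathcal{H}_\ell$-equivariant bijection of \cite{BN21} to one that is simultaneously $\Aut(G)_D$-equivariant, and the analogous case-by-case inspection for the sporadics---is never actually carried out. Phrases like ``is expected to follow'' and ``should follow from the combinatorial naturality'' mark precisely the places where work is missing. For the sporadics you are proposing a computer verification you have not done; for the alternating groups you are asserting a compatibility you have not checked.

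More importantly, the paper sidesteps this difficulty entirely by observing that $\mathcal{H}_0$ acts \emph{trivially} on both sides. On the global side, \cite[Thm.~5.2]{HSF24} gives $\mathrm{lev}(\chi)\leq 1$ for every $\chi\in\Irr_{\ell'}(G)$, so $\chi$ is $\sigma_e$-fixed for all $e\geq 1$ and hence $\mathcal{H}_0$-fixed. On the local side, for $P\in\Syl_\ell(G)$ one has $P/[P,P]$ elementary abelian (\cite[Lem.~3.3,~3.4]{NT16}), so $\Phi(P)=[P,P]$ and $\Irr_{\ell'}(\N_G(P))=\Irr(\N_G(P)/\Phi(P))$; then \cite[Lem.~5.3]{MMV25} forces every such character to be $\mathcal{H}_0$-invariant as well. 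Once $\mathcal{H}_0$ acts trivially on both sets, the already-established inductive McKay condition for these groups (\cite[Thm.~3.1]{Malle08b}) supplies an $\Aut(G)_P$-equivariant bijection which is then automatically $(\mathcal{H}_0\times\Aut(G)_P)$-equivariant, and Corollary~\ref{cor:cyclicout} finishes all cases except $A_6$. For $A_6$, the paper uses \cite[Prop.~5.13]{birtethesis} to dispose of $\ell=3$, and for $\ell=5$ argues via Lemma~\ref{lem:extnsinitial_var} and Lemma~\ref{lem:gluing} rather than raw character-table computation. The moral: you were trying to merge Galois-equivariance into an automorphism-equivariant bijection, whereas the paper shows the Galois part is vacuous here, reducing everything to the ordinary inductive McKay condition.
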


\begin{proof}
 From Lemma \ref{lem:nonlie}, we may assume $\ell$ is odd.  Note that when $S\neq A_6$,  we have $|\mathrm{Out}(S)|\leq 2$, and  $\mathrm{Out}(A_6)\cong C_2^2$. Let $G$ be the universal $\ell'$-covering group of $S$.

First, as pointed out in \cite[Thm.~5.2]{HSF24}, we have every $\chi\in\Irr_{\ell'}(G)$ has $\ell$-rationality level $\mathrm{lev}(\chi)\leq 1$, and hence these are fixed by $\galh_0$.  
Note that $P/[P,P]$ is elementary abelian for  $P\in\Syl_\ell(G)$ (see, e.g., \cite[Lem.~3.3, 3.4]{NT16}), so that $\Phi(P)=[P,P]$. 
In particular, we see $\Irr(\N_G(P)/\Phi(P))=\Irr(\N_G(P)/[P,P])=\Irr_{\ell'}(\N_G(P))$. Then applying \cite[Lem.~5.3]{MMV25}, we then have $\Irr_{\ell'}(\N_G(P))=\Irr_{\ell',\galh_0}(\N_G(P))$. That is, every $\ell'$-degree irreducible character of $\N_G(P)$ is also $\galh_0$-invariant. Since these groups also satisfy the inductive McKay conditions (see \cite[Thm.~3.1]{Malle08b}), it follows that there is an $(\galh_0\times\Aut(G)_P)$-equivariant bijection $\Omega\colon\Irr_{\ell'}(G)\rightarrow \Irr_{\ell'}(\N_G(P))$.  By Corollary \ref{cor:cyclicout}, this means the Isaacs--Navarro condition holds for $S$ if $S\neq A_6$.

Now let $S=A_6$. Then we may assume that $\ell=5$ by  \cite[Prop.~5.13]{birtethesis}.    
For characters $\chi\in\Irr_{\ell'}(G)$ with $|\Out(S)_\chi|\leq 2$, we may apply the considerations in Corollary \ref{cor:cyclicout}, so we assume that $\chi\in\Irr_{\ell'}(G)$ is stable under $\Out(S)$. By \cite[Lem.~4.4]{birtethesis}, $\chi$ extends to an $(\Aut(G)_P\times \galh_0)_\chi$-invariant character $\theta$ of $X:=G\rtimes\mathrm{Inn}(G)_P$, and similar for $\Omega(\chi)$. Applying Lemma \ref{lem:extnsinitial_var} and recalling that $\Aut(G)/\mathrm{Inn}(G)$ is a $2$-group, it then suffices to know that these extensions extend further to $Y:=(G\rtimes \Aut(G)_P)_\chi$ and $(\N_G(P)\rtimes\Aut(G)_P)_\chi$, respectively. In this situation, $Y/X$ is Klein-four, and we may apply Lemma \ref{lem:gluing} to see the statement.
\end{proof}

\section{Groups of Lie type in Non-defining characteristic}\label{sec:lie}

Thanks to Section \ref{sec:nonlie}, we are left to consider the case that $\ell$ is odd and that $S$ is a simple group of Lie type in non-defining characteristic with generic Schur multiplier and ``generic" Sylow-normalizer structure, in the sense of \cite[Thm.~5.14]{Ma07}. In particular, the universal covering group of $S$ is of the form $G=\bG^F$ where $(\bG, F)$ is a finite reductive group of simply connected type.

\subsection{Notation}\label{not:lie type}

Let $(\bG, F)$ be a finite reductive group such that $\bG$ is simple of simply connected type and $F$ is a Frobenius endomorphism defining an $\mathbb{F}_q$-structure on $\G$, where $q$ is a power of a prime $p$. We let $E(\G^F)$ denote the group of field and graph automorphisms of $\G^F$ as defined in \cite[Sec.~2.C]{CS25} and we set $E:=E(\G^F)$. We can then write $E=\langle \Gamma, F_p\rangle$, where $\Gamma$ is a group of graph automorphisms and $F_p$ is a standard Frobenius induced by the map $x\mapsto x^p$ on $\overline{\mathbb{F}}_p$.

We assume that $\ell$ is an odd prime with $\ell \neq p$ and let $d:=d_\ell(q)$ be the order of $q$ modulo $\ell$.
Let $\bS$ be a Sylow $d$-torus of $(\bG,F)$, as defined e.g. in \cite[3.5.6]{GM20}. Let $\bG\hookrightarrow\wt\bG$ be a regular embedding (see e.g. \cite[Sec.~1.7]{GM20}) and write $G:=\bG^F$ and $\wt{G}:=\wt\bG^F$. Let $\wt N:=\N_{\wt{G}}(\bS)$,  $N:=\wt N\cap G=\N_{G}(\bS)$, $\wt\bL:=\C_{\wt\bG}(\bS)$, $\bL:=\C_{\bG}(\bS)$, $\wt L:=\C_{\wt{G}}(\bS)=\wt\bL^F$, and $L:=\wt L\cap \G=\C_{G}(\bS)=\bL^F$. Further, let $\widehat N:=\N_{GE}(\Levi)$.

As defined in \cite{RSST}, we let 
\[\mathfrak{C}:=\{\wt\la\in\Irr(\wt L) \mid \Irr_{\ell'}(N)\cap \Irr(N\mid \Res^{\wt L}_L(\wt\la))\neq\emptyset\}.\]
That is, $\mathfrak{C}$ is the set of all  $\wt\la\in  \Irr(\wt L)$ satisfying that there is some $\chi\in\Irr_{\ell'}(N)$ lying above a constituent of the restriction $\Res^{\wt{L}}_L(\wt\la)$. Note that in this situation, $\chi$ lies over some $\la\in\Irr(L\mid \wt \la)$ such that $\la\in\Irr_{\ell'}(L)$ and $\ell\nmid [N:N_\la]$, by Clifford theory since $\la$ extends to $N_\la$ by \cite[Thm.~A]{spa09} and \cite[Thm.~1.1]{spath10}. We will also write \[\subL:=\{\la\in\Irr_{\ell'}(L)\mid \ell\nmid[N:N_\la]\}=\{ \lambda \in \Irr_{\ell'}(L) \mid \ell \nmid [W:W(\la)] \}.\]
where $W:=N/L$ and $W(\la):=N_\la/L$ for $\la\in\Irr(L)$.

It will be useful to note that $d$ is called a  \emph{regular number} for $(\bG, F)$ if $\bL$ is a torus. (See \cite[3.5.7]{GM20}.)  Moreover, we let $(\G^\ast,F)$ be a group in duality with $(\G,F)$ and denote $G^\ast:=(\G^\ast)^F$.

\subsection{A criterion for the inductive Isaacs--Navarro condition}

The following follows from Section \ref{sec:nonlie} and the results of \cite{RSST} and reduces us to determining appropriate extension maps and transversals.

\begin{theorem}\label{thm:RSST}
    Let $(\bG, F)$ be as above, keeping Notation \ref{not:lie type}, and such that $G=\bG^F$ is quasisimple. Assume further that all of the following hold:

   \begin{enumerate}
       \item\label{extmap} there exists an $(\Irr(\wt N/N)\rtimes  \widehat N\galh_0)$-equivariant extension map $\wt \Lambda$ for 
       $\mathfrak{C}$
       with respect to $\wt{L}\lhd \wt{N}$;
    \item\label{transversals} there exists an $(\galh_0\times \widehat{N})$-stable $\wt{G}$-transversal $\mathbb{T}$ of $\Irr_{\ell'}(G)$ and $(\galh_0\times \widehat{N})$-stable $\tilde{N}$-transversal $\mathbb{T}'$ of $\Irr_{\ell'}(N)$ and extension maps $\Phi_{glo}$ and $\Phi_{loc}$ for $\mathbb{T}$ and $\mathbb{T}'$ with respect to $G\lhd GE$ and $N\lhd \widehat{N}$;
    \item\label{extcond} $[\Phi_{glo}(\chi), \alpha]=[\Phi_{loc}(\psi), \alpha]$ for each $\alpha\in(\widehat N \galh_0)_\psi$ and each $\chi\in\mathbb{T}$ and $\psi\in\mathbb{T}'$ such that $\wt\Omega(\Irr(\wt{G})\mid \chi)=\Irr(\wt{N}\mid \psi)$, where $\wt\Omega$ is the map guaranteed by \cite[Thm.~B]{RSST}.
   \end{enumerate}
	Then the inductive Galois--McKay condition holds with respect to the subgroup $\galh_0$ 
    for the simple group $G/\Z(G)$. That is, the inductive Isaacs--Navarro condition holds for $G/\Z(G)$.
\end{theorem}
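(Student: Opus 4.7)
The plan is to adapt the argument of \cite[Thm.~B]{RSST}, which establishes the analog of this criterion for the larger group $\galh_\ell$, by restricting every equivariance statement from $\galh_\ell$ to its subgroup $\galh_0$. The three hypotheses $(1)$--$(3)$ are precisely the $\galh_0$-versions of the inputs used in \cite{RSST}, so the proof there should go through essentially verbatim once those inputs are in place.

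The first step is to construct the required bijection $\Omega\colon\Irr_{\ell'}(G)\to\Irr_{\ell'}(N)$. Using the map $\wt\Omega$ guaranteed by \cite[Thm.~B]{RSST} together with the $(\galh_0\times\widehat{N})$-stable transversals $\mathbb{T}$ and $\mathbb{T}'$ from hypothesis $(2)$, I would define $\Omega$ by sending $\chi\in\mathbb{T}$ to the unique $\psi\in\mathbb{T}'$ with $\wt\Omega(\Irr(\wt{G}\mid\chi))=\Irr(\wt{N}\mid\psi)$, and then extending by $\wt{G}$- (respectively $\wt{N}$-) conjugacy. The stability of the transversals together with the equivariance of $\wt\Omega$ already known from \cite{RSST} make $\Omega$ automatically $(\galh_0\times\widehat{N})$-equivariant. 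To upgrade this bijection to a verification of \cite[Def.~3.1]{NSV20}, one must exhibit, for each matched pair $(\chi,\psi)$, a pair of projective representations of the stabilizers $(GE)_\chi$ and $(\widehat{N})_\psi$ with compatible factor sets and compatible $\galh_0$-scalars on the appropriate central quotients. These are supplied directly by $\Phi_{glo}(\chi)$ and $\Phi_{loc}(\psi)$ from hypothesis $(2)$, which are honest extensions and thus projective representations with trivial factor set on their respective chains. The $(\Irr(\wt{N}/N)\rtimes\widehat{N}\galh_0)$-equivariant extension map $\wt\Lambda$ from hypothesis $(1)$ then plays exactly the role it plays in \cite{RSST}: it transports Clifford-theoretic data from $N$ up to $\wt{N}$, so that scalars on the global side---which have been moved to $\wt{G}$ through the regular embedding---can be compared with those on the local side via $\wt\Omega$.

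The main obstacle, and the reason hypothesis $(3)$ is stated separately, is the comparison of the Gallagher scalars arising in the $\galh_0$-triple condition \cite[Def.~1.5]{NSV20}: for each pair $(\chi,\psi)$ with $\Omega(\chi)=\psi$ and each $\alpha\in(\widehat{N}\galh_0)_\psi$, one needs the equality $[\Phi_{glo}(\chi),\alpha]=[\Phi_{loc}(\psi),\alpha]$ as linear characters of the corresponding quotient; this is precisely $(3)$. With that equality in hand, the remaining bookkeeping---propagation of the scalar identity along $\wt{G}$- and $\wt{N}$-conjugacy, compatibility with the central characters of $G$, and the handling of automorphisms in $\widehat{N}$ lying outside $N$ (which are controlled by hypothesis $(1)$ through the $\widehat{N}\galh_0$-equivariance of $\wt\Lambda$)---reduces to a direct transcription of the corresponding steps in \cite{RSST}, replacing $\galh_\ell$ by $\galh_0$ at each occurrence. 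I would therefore quote the technical manipulations of that reference rather than reproduce them.
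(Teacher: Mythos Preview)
Your proposal is essentially correct and follows the same route as the paper, but it is considerably more elaborate than necessary. The paper's proof is two sentences of citations: condition~(\ref{extmap}) together with \cite[Thm.~B]{RSST} produces the equivariant bijection $\wt\Omega\colon \Irr(\wt G\mid\Irr_{\ell'}(G))\to\Irr(\wt N\mid\Irr_{\ell'}(N))$ (preserving central characters), and then \cite[Cor.~3.5]{RSST} applied with conditions~(\ref{transversals}) and~(\ref{extcond}) yields the inductive Galois--McKay condition with respect to $\galh_0$. What you have written is, in effect, an outline of what happens \emph{inside} \cite[Cor.~3.5]{RSST}; that is fine and your sketch is faithful to the underlying argument, but the paper simply quotes the black boxes rather than reopening them.

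One small correction of attribution: you describe \cite[Thm.~B]{RSST} as ``the analog of this criterion for the larger group $\galh_\ell$'', but as used here \cite[Thm.~B]{RSST} is the statement that supplies the map $\wt\Omega$ once an equivariant extension map $\wt\Lambda$ as in condition~(\ref{extmap}) is given; the passage from $\wt\Omega$ together with conditions~(\ref{transversals}) and~(\ref{extcond}) to the inductive condition is handled separately by \cite[Thm.~3.4, Cor.~3.5]{RSST}. Keeping these two ingredients distinct makes the logical structure cleaner and matches the paper's presentation.
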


Note that by the results of Section \ref{sec:nonlie}, in the pursuit of proving the Isaacs--Navarro condition for all nonabelian simple groups, we may assume that 
we are in the situation of the hypotheses of Theorem \ref{thm:RSST}.

\begin{proof}[Proof of Theorem \ref{thm:RSST}]

As remarked above, note that we may assume that $G$ is the universal covering group of $G/\Z(G)$; that $G/\Z(G)$ is not isomorphic to an alternating or sporadic group; that $\ell$ is odd and $\bG$ is defined in characteristic $p\neq \ell$; and that there is a Sylow $\ell$-subgroup $D$ of $G$ such that $\N_G(D)\leq \N_G(\bS)$.

When combined with \cite[Thm.~B]{RSST}, condition (\ref{extmap}) yields an  $(\galh_0\ltimes (\Irr(\wt{G}/G)\rtimes (GE)_{\bS}))$-equivariant bijection
	$$\wt \Omega: \Irr(\wt{G}\mid \Irr_{\ell'}(G))\rightarrow\Irr(\wt N\mid \Irr_{\ell'}(N)),$$
	such that $\wt \Omega(\chi)$ and $\chi$ lie above the same character of $\Z(\wt{G})$ for each 
	 $\chi \in \Irr(\wt{G}\mid \Irr_{\ell'}(G))$.
Then using \cite[Cor.~3.5]{RSST}, Conditions (\ref{transversals}) and (\ref{extcond}) yield the inductive Galois--McKay condition with respect to $\galh_0$. 
\end{proof}

We can prove condition (\ref{extmap}) of Theorem \ref{thm:RSST} via providing an extension map for the characters of $L$:

\begin{lemma}\label{lem:lift ext map}
    Assume that there exists an $(\hat{N} \times \mathcal{H}_0)$-equivariant extension map $\Lambda$ for {an $(\hat N\times\galh_0)$-stable $\wt{L}$-transversal of} $\subL$
    with respect to $L\lhd N$. Then there exists an $(\Irr(\wt N/N)\rtimes  \widehat N\galh_0)$-equivariant extension map $\wt \Lambda$ for 
    $\mathfrak{C}$
    with respect to $\wt{L}\lhd \wt{N}$.
\end{lemma}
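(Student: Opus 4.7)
The plan is to define $\wt \Lambda(\wt \la)$ by combining the given $\Lambda$ with the Clifford correspondent of $\wt \la$ via the gluing Lemma \ref{lem:gluing}, and then inducing up to $\wt N_{\wt \la}$. Fix $\wt \la \in \mathfrak{C}$ and denote by $\mathcal{T}$ the given $(\hat N \times \galh_0)$-stable $\wt L$-transversal of $\subL$ on which $\Lambda$ is defined. Since the irreducible constituents of $\Res_L^{\wt L} \wt \la$ form a single $\wt L$-orbit contained in $\subL$ (which is $\wt L$-stable), exactly one representative $\la \in \mathcal{T}$ lies below $\wt \la$. Set $\hat \la := \Lambda(\la) \in \Irr(N_\la)$, and let $\eta \in \Irr(\wt L_\la \mid \la)$ be the Clifford correspondent of $\wt \la$ over $\la$, so that $\Ind_{\wt L_\la}^{\wt L}(\eta) = \wt \la$ and $\eta$ extends $\la$ (which is the standard behavior in this regular-embedding setup). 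By uniqueness of the Clifford correspondent, $\eta$ is $N_{\wt \la, \la}$-invariant, where $N_{\wt \la, \la} := N_{\wt \la} \cap N_\la$.

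I would then set $Y := \wt L_\la \cdot N_{\wt \la, \la}$, which is a subgroup because $N_{\wt \la, \la}$ normalizes $\wt L_\la$ (it stabilizes $\la$). One checks $\wt L_\la \lhd Y$, $\wt L_\la \cap N_{\wt \la, \la} = L$ (from $\wt L \cap N = L$), and that $\Res_{N_{\wt \la, \la}}^{N_\la} \hat \la$ is an irreducible extension of $\la$ (its degree equals $\dim \la$ and it restricts to $\la$ on $L$). Lemma \ref{lem:gluing} then yields a unique $\vartheta_Y \in \Irr(Y)$ extending both $\eta$ and $\Res_{N_{\wt \la, \la}}^{N_\la} \hat \la$, and I define
\[
\wt \Lambda(\wt \la) := \Ind_Y^{\wt N_{\wt \la}} \vartheta_Y.
\]
The main structural input is the decomposition $\wt N_{\wt \la} = \wt L \cdot N_{\wt \la, \la}$, obtained by writing any $n \in N_{\wt \la}$ as $n = t^{-1}(tn)$ where $t \in \wt L$ is chosen so that $\la^{t^{-1}} = \la^n$; this is possible because $N_{\wt \la}$ permutes the $\wt L$-orbit of $\la$ in $\Irr(L)$. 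Combined with $\wt L \cap Y = \wt L_\la$, a single double-coset Mackey computation gives $\Res_{\wt L}^{\wt N_{\wt \la}} \wt \Lambda(\wt \la) = \Ind_{\wt L_\la}^{\wt L} \eta = \wt \la$, and a degree count then shows $\wt \Lambda(\wt \la)$ is irreducible, hence a genuine extension of $\wt \la$.

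For the equivariance claim, there are two pieces. For $\nu \in \Irr(\wt N/N)$, the restriction $\nu|_L$ is trivial so the chosen $\la$ and $\hat \la$ are unchanged when $\wt \la$ is replaced by $\nu \wt \la$, while $\eta$ is replaced by $\nu|_{\wt L_\la} \eta$; uniqueness in Lemma \ref{lem:gluing} forces $\vartheta_Y$ to be replaced by $\nu|_Y \vartheta_Y$, and induction yields $\wt \Lambda(\nu \wt \la) = \nu|_{\wt N_{\wt \la}} \wt \Lambda(\wt \la)$. For $\sigma \in \hat N \galh_0$, stability of $\mathcal{T}$ gives $\la^\sigma \in \mathcal{T}$, the $(\hat N \times \galh_0)$-equivariance of $\Lambda$ gives $\Lambda(\la^\sigma) = \hat \la^\sigma$, uniqueness of the Clifford correspondent gives that $\eta^\sigma$ is the correspondent of $\wt \la^\sigma$ over $\la^\sigma$, and the compatibility of Lemma \ref{lem:gluing} and induction with $\sigma$ yield $\wt \Lambda(\wt \la^\sigma) = \wt \Lambda(\wt \la)^\sigma$. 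I expect no individual step to be particularly difficult; the main effort is bookkeeping the decomposition $\wt N_{\wt \la} = \wt L N_{\wt \la, \la}$ and its interaction with $Y = \wt L_\la N_{\wt \la, \la}$ through the equivariance verification.
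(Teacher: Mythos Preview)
Your construction is the right one and is essentially what the paper obtains by citing \cite[Prop.~2.3]{CS25}: glue the Clifford correspondent $\eta$ of $\wt\la$ to the restriction of $\Lambda(\la)$ via Lemma~\ref{lem:gluing}, then induce. However, your justification of the key decomposition $\wt N_{\wt\la}=\wt L\cdot N_{\wt\la,\la}$ contains a genuine gap. You write $n\in N_{\wt\la}$ as $n=t^{-1}(tn)$ with $t\in\wt L$ chosen so that $\la^{t^{-1}}=\la^n$, and you want $tn\in N_{\wt\la,\la}$. But $N_{\wt\la,\la}\subseteq N$, so you need $tn\in N$; since $n\in N$, this forces $t\in\wt L\cap N=L$, hence $\la^{t^{-1}}=\la$ and thus $\la^n=\la$. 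Your argument does not establish this, and without it the index $[\wt N_{\wt\la}:Y]$ acquires an extra factor $[N_{\wt\la}:N_{\wt\la,\la}]$, so the induced character fails to be an extension.

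The fix is short and uses a hypothesis you already recorded but did not invoke here: the transversal $\mathcal{T}$ is $(\hat N\times\galh_0)$-stable, hence $N$-stable. For $n\in N_{\wt\la}$ one has $\la^n\in\mathcal{T}$, while $\la^n$ is also a constituent of $\Res_L^{\wt L}\wt\la$ and therefore $\wt L$-conjugate to $\la$. Since $\mathcal{T}$ is a $\wt L$-transversal, this forces $\la^n=\la$; that is, $N_{\wt\la}\subseteq N_\la$, so $N_{\wt\la,\la}=N_{\wt\la}$ and $\wt N_{\wt\la}=\wt L N_{\wt\la}$ is immediate from $\wt L\leq\wt N_{\wt\la}$. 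With this in place, your Mackey computation, degree count, and both equivariance checks go through exactly as written.
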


\begin{proof}
This follows from \cite[Prop.~2.3]{CS25}, taking $A:=\wt{N}$, $X:=\wt{L}$, $A_0:=N$, $X_0:=L$, $\hat{A}:=\wt{N}\hat{N}\galh_0$, and $\hat{A}_0:=\hat{N}\galh_0$. (Note that there the statement would yield an extension map for $\Irr(\wt L)$ given an extension map for a transversal of $\Irr(L)$, but the same proof applies when considering just our subsets $\mathfrak{C}$ and $\subL$.)
\end{proof}
   
\begin{remark}\label{rem:lift ext map}
Note that, in particular, the assumption of Lemma \ref{lem:lift ext map} is satisfied in the case that $\wt L$ is abelian (or when it is known that $\Res^{\wt L}_L(\lambda)$ is always irreducible) assuming just an $\hat{N} \times \mathcal{H}_0$-equivariant extension map for 
$\subL$ 
with respect to $L\lhd N$. 
    \end{remark}

\section{Towards the inductive condition for groups of Lie type}\label{sec:toward}

Throughout this section, we keep the situation  of Notation \ref{not:lie type}.

\subsection{The extension condition}
In this subsection, we consider part (\ref{extcond}) of Theorem \ref{thm:RSST}. Thanks to the results of \cite{RSST}, we will obtain the Isaacs--Navarro condition for type $\type{A}$ in Corollary \ref{cor:typeA} below.

In the case where $G$ is a quasi-simple group, any character $\chi \in \Irr_{\ell'}(G)$ necessarily has a trivial determinantal character, so that Lemma \ref{lem:extnsinitial_var} applies. On the other hand, in the local situation, the following lemma is helpful toward applying Lemma \ref{lem:extnsinitial_var}, though rather technical:

\begin{lemma}\label{lem:locextstructure}
Let $\hat{N}:=\N_{G E(\G^F)}(\Levi)$.
    There exists a subgroup $\hat{V} \leq \hat{N}$ such that $\hat{N}=L \hat{V}$ and $\hat{V} \cap L$ is a $2$-group with $\hat{V} \cap L \leq \Z(L)$. Moreover, there exists $\hat{E} \leq \hat{V}$ such that $\hat{N}=N \hat{E}$ and $\hat{E} \cap N$ is an $\ell'$-group. If $\bG$ is not of type $\tD$, then $\hat{E}$ centralizes $V:=\hat{V} \cap N$ while if $\bG$ is of type $\tD$ and $\Levi$ is not a torus, there exists a subgroup $V_{\tD}$ of $2$-power index in $V$ such that $\hat{E}$ centralizes $V_{\tD}$.
\end{lemma}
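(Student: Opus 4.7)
The plan is to construct $\hat V$ and $\hat E$ as explicit lifts to $\hat N$ of chosen subgroups of the quotient $\hat N/L$, using the well-understood structure of the relative Weyl group of a Sylow $d$-torus (going back to Brou\'e--Malle--Michel) together with Tits-style representatives of reflections and carefully chosen representatives of field and graph automorphisms.

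First I would recall that $\hat N/L$ has the form $W \rtimes D$, where $W := N/L$ is the relative Weyl group acting on $\bS$ as a complex reflection group, and $D := \hat N/N$ is the image of the stabilizer in $E(\G^F)$ of $\bL$. For each reflection $w \in W$, I would fix a Tits-style representative $\dot{w} \in N$ whose square lies in $\Z(\bG)^F \leq \Z(L)$; the subgroup $V \leq N$ generated by these representatives then surjects onto $W$, with $V \cap L$ contained in $\Z(\bG)^F$. After passing to a 2-part refinement (using that $\ell$ is odd and that $\Z(\bG)^F$ has order divisible only by small primes for the relevant simply connected types), I can arrange that $V \cap L$ is a 2-subgroup of $\Z(L)$. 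Next, I would pick representatives of $D$ in $\hat N$ by lifting each field and graph automorphism class to act diagonally on a fixed $F$-stable maximally split torus of $\bL$, obtaining $\hat E$; setting $\hat V := V \hat E$ then yields $\hat N = L\hat V = N\hat E$. Since field Frobenius representatives have $p$-power order modulo inner automorphisms and graph automorphism representatives have small order coprime to $\ell$ (the Suzuki/Ree and Tits cases being handled already by Lemma \ref{lem:nonlie}), $\hat E \cap N$ is an $\ell'$-group.

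The final step is to verify that $\hat E$ centralizes $V$. For types $\tA$, $\tB$, $\tC$, $\tE$, $\tF$, $\tG$, standard computations with Chevalley generators show that $F_p$ fixes the chosen Tits-style representatives, while the graph automorphism either acts trivially on $W$ or preserves the representatives setwise by symmetry, yielding centralization. The main obstacle is type $\tD$: the diagram automorphism swaps a pair of reflection generators of $W$ (realized as a complex reflection group of type $G(de,e,a)$ when $\bL$ is non-toral), so $\hat E$ generally does not centralize all of $V$. I would define $V_\tD \leq V$ as the preimage in $V$ of the subgroup of $W$ pointwise fixed by the graph-automorphism action, and verify directly from the combinatorics of $G(de,e,a)$ that $V/V_\tD$ has 2-power order. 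Carefully distinguishing the subcases (whether the graph automorphism preserves $\bL$ or conjugates it to another Levi, and whether $d$ is odd or even) and identifying $V_\tD$ concretely so that both the centralization property and the 2-power-index condition are simultaneously achieved is the most delicate part of the argument.
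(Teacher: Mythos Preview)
Your overall shape is right, but there is a real gap in the construction of $V$. You propose to lift the relative Weyl group $W=N/L$ by choosing ``Tits-style representatives $\dot w\in N$ whose square lies in $\Z(\bG)^F$'' for the reflections of $W$. The problem is that $W$ is a \emph{complex} reflection group (for non-regular $d$ it is typically of type $G(de,e,a)$), so its reflections need not have order $2$, and there is no off-the-shelf Tits extension for such groups; in particular there is no reason the braid relations among your chosen $\dot w$ should hold, so you cannot conclude that $V\cap L$ is controlled by $\Z(\bG)^F$ (or even that it is a $2$-group). The ``passing to a $2$-part refinement'' step does not repair this: the issue is the group generated by the representatives, not the representatives individually.

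The paper avoids this by never trying to lift $W$ directly. Instead it works with the Tits extended Weyl group $V_0$ of the \emph{full} Weyl group $W_0$ of $\bG$ (a genuine Coxeter group, where the Tits construction applies and $V_0\cap\T_0=H_0$ is elementary abelian of exponent $2$), and then uses Sp\"ath's technique of \emph{good Sylow $d$-twists}: one replaces $F$ by $vF$ for a suitable $v\in V_0$ so that $\T_0^{vF}$ contains a Sylow $d$-torus and $V_0^{vF}$ surjects onto $\C_{W_0}(\rho(v)F)\cong W$. This is what produces a lift $V$ of $W$ with $V\cap L\leq H_0$ a $2$-group inside $\Z(L)$. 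The same twist also pins down $\hat E$: the field automorphism is realised as a specific $\hat F_p$ coming from an element with Lang image $v$, and the key fact that $\hat E\cap N$ is an $\ell'$-group comes from the order of $v$ (dividing $4d$, hence prime to $\ell$), not from ``$p$-power order of Frobenius representatives'' as you suggest. For non-regular $d$ one further passes to a smaller simply connected subgroup $\bG_1$ (of type $\tB_{n-r}$, $\tC_{n-r}$, etc.) for which $d$ becomes regular and repeats the twist argument there; the type $\tD$ subtlety is handled via the embedding $\bG\hookrightarrow\overline\bG$ of type $\tB_n$ and explicit Chevalley computations with $n_{e_1}(\omega)$. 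Your proposal contains none of this machinery, and without the Sylow $d$-twist step the argument does not go through.
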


\begin{proof}

Let $\T_0$ be a maximally split torus of $\bG$ with Weyl group $W_0$ and let $V_0\leq \N_{\bG}(\T_0)$ be Tits' extended Weyl group \cite{tits}. (See also, e.g. \cite[Setting~2.1]{spa09} for more comments on this group). Recall that we have a surjective group homomorphism $\rho: V_0\to W_0$ with kernel $H_0 :=V_0\cap \T_0$ an elementary abelian $2$-group. Moreover, recall that $\rho$ has a canonical set-theoretic splitting $r:W_0 \to V_0$ and we denote by $\tilde{w}_0:=r(w_0)$ the image of the longest element $w_0 \in W_0$ under $r$. 

As introduced in \cite[Sec.~3]{spa09}, an element $v \in \N_G(\T_0)$ is called a Sylow $d$-twist for $(\G,F)$ if $\T_0^{vF}$ contains a Sylow $d$-torus of $\G^{vF}$. The twist $v$ is called good if $\rho(V_0^{vF})=\C_{W_0}(\rho(v) F)$. Given a suitable Sylow $d$-twist $v \in V_0$, we denote by $\hat{F}_p$ the image in $\G^{vF} \langle F_p \rangle / \langle vF \rangle$ of the automorphism acting as $F_p$ on $\G^{vF}$ such that $\G^{vF} \langle \hat{F}_p \rangle \cong \G^{F} \langle F_p \rangle$ given by conjugation with an element $g$ whose Lang image under $F$ is $v$, see \cite[Prop.~3.6]{S21D2}. If moreover, $\Gamma_0 \leq G \Gamma$ with $G \Gamma_0= G \Gamma$ and $[\Gamma_0,v]=1$ then by \cite[Prop.~3.6]{S21D2} we have $\G^{vF} \langle \hat{F}_p, \Gamma_0 \rangle \cong \G^{F} E$.

Suppose first that $\G$ is not of type $\tD$ and that $d$ is a regular number. (That is, $d$ is such that $\bL$ is a torus.) In \cite{CS19}, in the verification of the conditions of \cite[Thm.~4.3]{CS19}, it is shown that there exists some $v \in V_0$ such that $v$ is a good Sylow $d$-twist and $\N_{\bG^{vF}}(\T_0)=\T_0^{vF} V_0^{vF}$.

Since $\G$ is not of type $\tD$, it follows that any non-trivial graph automorphism $\gamma$ acts like the longest element $w_0$ on the Weyl group $W_0$. In particular, $V_0$ is centralized by $\gamma \tilde{w}_0$. Hence, we can define $\hat{V}_0:=V_0 \hat{E}_0$ with $\hat{E}_0=\langle \hat{F}_p, \gamma \tilde{w}_0 \rangle$ if $\bG$ is split and admits a non-trivial graph automorphism and $\hat{E}_0=\langle \hat{F}_p \rangle$ otherwise.
Then $\N_{\G^{vF} \hat{E}_0}(\T_0)=\T_0^{vF} \hat{V}_0^{vF}$.
In particular, the intersection $V_0^{vF} \cap \T_0^{vF}=H_0^{vF}$ is an elementary abelian $2$-group. 

For the last claim observe that by definition $\langle \hat{F}_p \rangle \cap \G^{vF}=\langle \hat{F} \rangle$ where $\hat{F}=v^{-1}$. 
As observed in \cite[Lem.~8.7]{RSST} the image of $v$ in $W$ has order dividing $\mathrm{lcm}(\delta,d)$ in $W_0$ (where $\delta \in \{1,2\}$ is the smallest positive integer such that $F^\delta$ acts trivially on $W_0$). As $V_0/H_0 \cong W_0$ and $H_0$ is a $2$-group, it follows that $v$ has order dividing $4d$. As $\ell \nmid 4d$ it thus follows that $\hat{E}_0 \cap \G^{vF}$ is an $\ell'$-group.

Assume now that $d$ is not regular. Let $\Phi$ be the root system of $\bG$ relative to the maximal torus $\T_0$. We let $\Levi_I$ be a standard Levi subgroup with root system $\Phi_I \subset \Phi$ such that a minimal $d$-split Levi subgroup of $(\G,F)$ is $\G$-conjugate to $\Levi_I$. Set $\Phi':=\Phi \cap \Phi_I^{\perp}$. Suppose first that $\G$ is of type $\mathrm{X}\in \{\tA,\tB,\tC\}$ and consider $\G_1:=\langle X_\al \mid \al \in \Phi' \rangle$ a simple simply connected group of type $\mathrm{X}_{n-r}$ such that $d$ is regular for $(\G_1,F)$. {(Here $X_\alpha$ denotes the root subgroup corresponding to $\alpha\in\Phi$).} Let $V_1$ be the extended Weyl group of $\bG_1$ relative to the maximal torus $\T_1:=\T_0 \cap \G_1$. We can take a good Sylow $d$-twist $v \in \N_{\bG}(\T_1)$ of $(\bG_1,F)$ such that $(\Levi_I,vF)$ is a minimal $d$-split Levi subgroup of $(\G,vF)$. It follows that $\N_{\G^{vF}}(\Levi_I)=\Levi_I^{vF} V_1^{vF}$ with $V_1 \cap \Levi_I \subset V_1 \cap H_0$ again an elementary $2$-group which is contained in $\Z(\Levi_I)$ by \cite[Bem.~2.1.7]{S07}. Moreover, by the results in the regular case, the element $v$ has order divisible by $4d$ which shows again that $\hat{E}_0 \cap \G^{vF}$ (with $\hat{E}_0$ defined as in the regular case) is an $\ell'$-group.

If the root system of $\bG$ is exceptional, the case where the Sylow $\ell$-subgroups of $\bG^F$ are non-cyclic follows from the discussion in \cite[Sec.~6.3]{CS19}.

So, assume that $\G$ is of exceptional type and the Sylow $\ell$-subgroups are cyclic. Note that when $G=\tE_7(q)$ and $d=4$ the Sylow $\ell$-subgroup of $G$ are non-cyclic. In the remaining cases, \cite[Table 3]{spa09} gives the precise structure of $N/L$. If $\bG$ is not of type $\tE_6$, then we see from this table that $N/L$ is cyclic of size $ \mathrm{lcm}(2,d)$. In particular, $\N_{\bG^{vF}}(\Levi)=\Levi^{vF} \langle \tilde{w}_0, v \rangle$, where $\tilde{w}_0 \in \Z(V_0)$ is the canonical preimage of the longest element of the Weyl group and $v \in V_0$ is a Sylow $d$-twist. Note that as argued in \cite[Lem.~8.6]{RSST}, we can always choose $v$ to satisfy $v^d \in H_0$. On the other hand, if $\Levi\neq \bG$ then $d\mid o(v H_0)$ which shows that $d=o(v H_0)$. In type $\tE_6(q)$ for $d=5$ (resp. $d=10$ in $\tE_6(-q)$) we have $\N_{\bG^{v F}}(\Levi_I)=\Levi_I^{v F} \langle v_0 \rangle$ for some element $v_0 \in V_0$ with $v_0^{5} \in H_0$ and $(vF)^{\delta}=v_0 F^{\delta}$ with $\delta \in \{1,2\}$. The claim follows from this in all cases.

Suppose now that $G$ is of type ${}^3 \tD_4(q)$. In this case $E=\langle F_p\rangle$ and so the claim follows directly from \cite[Satz 5.2.7]{S07}. If $G=\tD_4(q)$, then $d$ is only relevant when $d \in \{1,2,3,4,6 \}$. If $d=1,2$ then we can simply take $v\in \{1, \tilde{w}_0 \}$ and $\hat{E}_0=\langle \hat{F}_p, \Gamma \rangle$. Suppose that $d \in \{3,6\}$. In this case $V_0 \langle \hat{F}_p, \Gamma \rangle$ is an $\ell'$-group as $\ell \notin \{2, 3\}$. Note that a Sylow $3$-subgroup $\hat{V}_3$ of $V_0 \Gamma$ is isomorphic to a Sylow $3$-subgroup of $W_0 \Gamma$. A calculation in MAGMA shows that $\hat{V}_3 \cong C_3 \times C_3$. It follows that $1 \neq v \in \hat{V}_3 \cap V_0$ is a Sylow $3$-twist and $v \tilde{w}_0$ is a Sylow $6$-twist of $\G$. Moreover, we can assume that $v$ is $\gamma$-stable, where $\gamma$ is a graph automorphism of order $2$. We  therefore, take $\hat{V}_0:=\langle v, \tilde{w}_0 \rangle \hat{E}_0$ with $\hat{E}_0:=\langle v_3, \gamma, \hat{F}_p \rangle$ for some $v_3 \in \hat{V}_3 \setminus V_3$.

Suppose now that $G$ is of type ${}^\varepsilon \tD_n(q)$ and $d$ is doubly regular, i.e. $d \mid 2n$ such that $\frac{2n}{d}$ is even (resp. $\frac{2n}{d}$ is odd if $\varepsilon=2$). In this case the construction in \cite[Proposition 4.8]{CS25} yields the claim.

Assume now that $d$ is not doubly regular.
We embed $\G$ into a group $\overline{\G}$ of type $\tB_n$ of the same rank as in \cite[Not.~3.3]{S21D1}. We also make use of the Steinberg presentation of the group $\overline{\G}$ as discussed in \cite[Not.~3.3]{S21D1}. In particular, for $\overline{\al} \in \ov{\Phi}$ and $t \in \mathbb{F}^\ast$ we have the elements $h_\al(t) \in \T_0$ and $n_{\al}(t)\in \N_{\ov \bG}(\T_0)$ as defined in \cite[Not.~3.3]{S21D1}. 

Let $\overline{W}_0$ (resp. $\overline{V}_0$) be the Weyl group (resp. Tits' group) in $\overline{\G}$ relative to $\T_0$ containing $W_0$ (resp. $V_0$) as a subgroup of index $2$. Fix an element $\omega\in \mathbb{F}^\times$ of order $4_{p'}$. For $\overline{m} \in \overline{V}_0$ we define $m:=\overline{m}$ if $\overline{m} \in V_0$ and $m:=\overline{m} n_{e_1}(\omega)$ otherwise. In addition, we set $m':=\overline{m}$ if $\overline{m} \in V_0$ and $m'=\overline{m} n_{e_1}(1)$ otherwise.

By \cite[Lems.~10.2,~11.3]{S10a}, there exists $r > 0$ with $d \mid 2(n-r)$ and a good Sylow $d$-twist $\overline{v}$ of $\overline{\G}_1:=\langle X_{\overline{\al}} \mid \overline{\al} \in \overline{\Phi} \cap \langle e_{r+1}, \dots, e_{n} \rangle \}$, a group of type $\tB_{n-r}$, such that $v$ is a Sylow $d$-twist of $G$.

Let $\overline{V}_1$ be the extended Weyl group associated to the maximal torus $\T_0 \cap \overline{\G}_1$ of $\overline{\G}_1$. Note that in this case, the Chevalley relations (see \cite[Satz 2.1.6, Bem.~2.1.7]{S07}) show that $[\gamma,m]=1$ for all $\overline{m}\in \overline{V}_1$. For this recall from \cite[Def.~3.4]{S21D1} that $\gamma$ is given by conjugation action with $n_{e_1}(\omega)$.
We have $[n_{e_1}(1),\overline{m}] \in \langle h_0 \rangle$ with $[n_{e_1}(1),\overline{m}]=1$ if and only if $\rho(\overline{m}) \in W_0$. Moreover, $n_{e_1}(\omega)=h_{e_1}(\omega) n_{e_1}(1)$ and $[h_{e_1}(\omega),\overline{m}]=1$ while $[n_{e_1}(1),h_{e_1}(\omega)]=h_{e_1}(-1)$. Hence, $[\gamma,m]=1$ as required.

Similarly, the Chevalley relations together with the fact that $\overline{v}$ is a good Sylow $d$-twist for $\overline{\bG}$ then show that $\N_{\bG^{vF}}(\Levi_I)=\Levi_I^{vF} V_1$ where $V_1:=\langle m\mid \overline{m} \in\overline{V}_1 \rangle \cap \bG^{vF}$ (resp $V_1:=\langle m' \mid \overline{m} \in\overline{V}_1 \rangle \cap \bG^{vF}$ if $F(h_{e_1}(\omega))=h_{e_1}(\omega^{-1})$). This is because if $m\in \overline{V}_1$ with $[\overline{m},\overline{v}]=1$ then $[m,v]=1$ by the computations above. Note that $V_1 \cap \Levi \leq \langle h_{\alpha}(-1) \mid \alpha \in \overline{\Phi}\cap \{e_{r+1},\dots, e_n\} \rangle \langle h_{e_1}(\omega) \rangle$. If $F(h_{e_1}(\omega))=h_{e_1}(\omega^{-1})$, then $V_1 \cap \Levi \leq \langle h_{\alpha}(-1) \mid \alpha \in \overline{\Phi}\cap \{e_{r+1},\dots, e_n\} \rangle$ while otherwise $V_1 \leq V_0^{\gamma}$ so again $V_1 \cap \Levi_I \leq \langle h_{\alpha}(-1) \mid \alpha \in \overline{\Phi}\cap \{e_{r+1},\dots, e_n\} \rangle$. From this we deduce that $V_1 \cap \Levi_I  \leq \Z(\Levi_I)$.

Moreover in this case we can set $\hat{E}_0=\langle \hat{F}_p, \gamma \rangle$ if $F$ is split and $\hat{E}_0=\langle \hat{F}_p \rangle$ if $F$ is twisted. Finally observe that $V_{1,\tD}:=\langle \overline{m} \mid \overline{m} \in \overline{V}_1 \cap V_0 \rangle \cap V_1$ is centralized by $\hat{E}_0$ and has $2$-power index in $V_1$.
\end{proof}

\begin{corollary}\label{cor:structure sylow}
    Keep the notation of Lemma \ref{lem:locextstructure}. There exists a Sylow $\ell$-subgroup $D \in \Syl_\ell(N)$ such that $D=\Z(L)_\ell \rtimes V_0$ where $V_0 \cap L=1$ and $V_0 \in \Syl_\ell(V)$. Moreover, if $\bG$ is not of type $\tD$ or $d$ is not regular, then $\N_{\hat{G}}(D)=\N_{G}(D) \hat{E}$.
\end{corollary}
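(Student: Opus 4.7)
The plan is to combine Lemma~\ref{lem:locextstructure} with Remark~\ref{rem:split sylow} to decompose a Sylow $\ell$-subgroup of $N$. From $\hat N=L\hat V$ and $L\leq N$, every $n\in N$ decomposes as $n=lv$ with $l\in L$ and $v=l^{-1}n\in \hat V\cap N=V$, so $N=LV$. By Lemma~\ref{lem:locextstructure}, $V\cap L=\hat V\cap L$ is a $2$-group and hence an $\ell'$-group, since $\ell$ is odd. Fixing $V_0\in\Syl_\ell(V)$, we have $V_0\cap L=1$ and $|V_0L/L|=|V|_\ell=|N/L|_\ell$, so $V_0 L/L\in\Syl_\ell(N/L)$. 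Applying Remark~\ref{rem:split sylow} with $X=L$, $Y=N$, and $K=V_0$ then yields $D\in\Syl_\ell(N)$ of the form $D=(D\cap L)\rtimes V_0$.

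To identify $D\cap L=\Z(L)_\ell$, I invoke the standing hypothesis that we lie outside the non-generic Sylow-normalizer cases of \cite[Thm.~5.14]{Ma07}. Under this hypothesis a Sylow $\ell$-subgroup of $L=\Levi^F$ is contained in $\bS^F\leq \Z(\Levi)^F=\Z(L)$, since generically $\bS$ carries the entire $\ell$-part of $\Levi^F$. This gives $D\cap L=\Z(L)_\ell$ and establishes the first assertion.

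For the second claim, assume either that $\bG$ is not of type $\tD$, or that $d$ is not regular, so that $\Levi$ is not a torus. By Lemma~\ref{lem:locextstructure}, $\hat E$ centralizes $V$ in the first case and $V_\tD$ in the second; since $V_0$ is an $\ell$-group with $\ell$ odd and $V_\tD$ has $2$-power index in $V$, we have $V_0\leq V_\tD$ in the latter case, so $\hat E$ centralizes $V_0$ in either situation. Combined with the observation that $\hat E\leq \hat N$ normalizes $L$ and hence $\Z(L)_\ell$, we obtain $\hat E\leq\N_{\hat G}(D)$, and thus $\N_G(D)\hat E\subseteq\N_{\hat G}(D)$. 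For the reverse inclusion, the generic hypothesis gives $\C_{\bG}(D)^\circ=\Levi$ (using that $D$ is also a Sylow $\ell$-subgroup of $G$), so $\N_{\hat G}(D)\leq\N_{\hat G}(\Levi)=\hat N=N\hat E$. Writing $x=n\hat e$ with $n\in N$ and $\hat e\in\hat E$, the element $n=x\hat e^{-1}$ normalizes $D$, giving $n\in\N_N(D)\leq\N_G(D)$ and $x\in\N_G(D)\hat E$. The subtleties to verify carefully are the identifications $D\cap L=\Z(L)_\ell$ and $\C_{\bG}(D)^\circ=\Levi$, which are standard consequences of being outside \cite[Thm.~5.14]{Ma07}.
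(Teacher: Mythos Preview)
Your overall structure mirrors the paper's, but there is a genuine gap in the reverse inclusion for the second assertion. The claim $\C_{\bG}(D)^\circ=\Levi$ is false whenever $V_0\neq 1$. Indeed, $V_0\cap L=1$ with $V_0\neq 1$ means $V_0$ maps injectively to the relative Weyl group $N/L$ and therefore acts nontrivially on $\Levi$ by conjugation; hence $\Levi\not\leq \C_{\bG}(V_0)\supseteq\C_{\bG}(D)$, so $\C_{\bG}(D)^\circ$ is strictly smaller than $\Levi$. (For a concrete instance, take $\bG$ of type $\tC_3$, $d=1$, $\ell=3$: then $\Levi=\T$ is the maximal torus and $V_0$ is a $3$-cycle, so $\C_{\bG}(D)^\circ$ is a one-dimensional subtorus.) Thus your route to $\N_{\hat G}(D)\leq\hat N$ breaks down.

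The repair is simple and is what the paper's terse argument is really using: one does not need $\N_{\hat G}(D)\leq\hat N$ at all. From $\hat N=N\hat E$ (Lemma~\ref{lem:locextstructure}) and $G$-conjugacy of minimal $d$-split Levi subgroups one gets $\hat G=G\hat N=G\hat E$. Now run your decomposition argument with $\hat G=G\hat E$ in place of $\hat N=N\hat E$: for $x\in\N_{\hat G}(D)$ write $x=g\hat e$ with $g\in G$, $\hat e\in\hat E$; since $\hat e$ normalizes $D$ (it centralizes $V_0$ and normalizes $\Z(L)_\ell$), so does $g=x\hat e^{-1}$, giving $g\in\N_G(D)$ and $x\in\N_G(D)\hat E$.

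A secondary imprecision: your justification of $D\cap L=\Z(L)_\ell$ appeals to \cite[Thm.~5.14]{Ma07} for the assertion that a Sylow $\ell$-subgroup of $L$ lies in $\bS^F$, but that theorem concerns $\N_G(D)\leq N$, not the $\ell$-part of $L$. The paper instead shows directly that the Sylow $\ell$-subgroup of $L$ is central: for $\ell\geq 5$ (resp.\ $\ell\geq 7$ when $G=\tE_8(q)$) this is in the proof of \cite[Thm.~7.1]{CS13}, while for the remaining small primes $d$ is forced to be regular so $\Levi$ is a torus. You should replace your hand-wave with one of these explicit arguments.
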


\begin{proof}
We first claim that the Sylow $\ell$-subgroup of $L$ is central in $L$. By the proof of \cite[Thm.~7.1]{CS13} this holds whenever $\ell \geq 5$ (resp. $\ell\geq 7$ if $G=\tE_8(q)$). If $\ell=3$, then $d\in\{1,2\}$ is regular, so $\bL$ is a torus, hence abelian (see \cite[Ex.~3.5.7]{GM20}).  Similarly, if $\ell=5$ and $G=\tE_8(q)$ then $d \in \{1,2,4 \}$ is again regular, so $\bL$ is torus. So, we see the claim in all these cases.

From this it follows that $D \cap L=\Z(L)_\ell$ and the first claim follows from Remark \ref{rem:split sylow}. For the second claim let $V_0 \in \Syl_\ell(V)$ (resp. $V_0 \in \Syl_\ell(V_{\tD})$ if $\bG$ is of type $\tD$ and $d$ is not regular) such that by Remark \ref{rem:split sylow} $D:=\Z(L)_\ell \rtimes V_0$ is a Sylow $\ell$-subgroup of $N$ with $V_0 \cap L=1$. As $\hat{E}$ centralizes $V_0$ it follows that $\N_{\hat{G}}(D)=\N_{G}(D) \hat{E}$.
\end{proof}

\begin{remark}\label{rem:cabanes}
In the case where $\ell \geq 5$ (resp. $\ell \geq 7$ if $G=\tE_8(q)$), the first part of the previous corollary could also be obtained from the more precise description of Sylow $\ell$-subgroups in \cite[Lem.~4.16]{CE99}.
Indeed, in this situation the group $D$ is Cabanes, i.e. has a unique maximal normal abelian subgroup. Note that $L=\C_G(\Z(L)_\ell)$ by \cite[Props.~13.16,~13.19,~22.6]{CE04}. From \cite[Prop.~22.13]{CE04} we then get that $\Z(L)_\ell$ is necessarily the maximal normal abelian subgroup of $D$. Moreover, by \cite[Lem.~4.16]{CE99} we have $D=\Z(L)_\ell \rtimes S$ for some $S \leq D$ and $S \cap \Z(L)_\ell=1$. 
\end{remark}

From Lemmas \ref{lem:extnsinitial_var} and \ref{lem:locextstructure}, we obtain the ``extension part" of the inductive condition with respect to $\galh_0$.

\begin{corollary}\label{cor:extensionpart}
Let $(\bG, F)$ be as in Theorem \ref{thm:RSST} and continue to keep the notation before. Let $\Irr_{\ell', ext}(G)$ and $\Irr_{\ell', ext}(N)$ denote the subset of $\Irr_{\ell'}(G)$, resp. $\Irr_{\ell'}(N)$, of characters that extend to their inertia group in $GE$, resp. $\hat N$. 
Then there are extension maps $\Phi_{glo}$ and $\Phi_{loc}$ for $\Irr_{\ell', ext}(G)$ with respect to $G\lhd GE$, resp. $\Irr_{\ell', ext}(N)$ with respect to $N\lhd \hat N$ such that $[\Phi_{glo}(\chi), \alpha]=1$ and $[\Phi_{loc}(\chi), \beta]=1$ for each $\chi\in\Irr_{\ell', ext}(G)$, $\psi\in\Irr_{\ell', ext}(N)$, $\alpha\in (\hat N \galh_0)_\chi$, and $\beta\in(\hat N\galh_0)_\psi$. In particular, {if (\ref{extmap}) and (\ref{transversals}) of Theorem \ref{thm:RSST} hold, then} (\ref{extcond}) of Theorem \ref{thm:RSST} holds.
\end{corollary}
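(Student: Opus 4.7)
The plan is to build the two extension maps so that each extension is \emph{literally} fixed by the relevant stabilizer, not merely up to a linear character; Gallagher's theorem will then force every associated scalar to be trivial. The key tool is Lemma \ref{lem:extnsinitial_var}: it produces an extension whose stabilizer in $\hat Y\times\galh_0$ coincides with that of the starting character. For the global side I would apply case (i) of the lemma, and for the local side case (ii), with the role of $\hat K$ played by the subgroup $\hat E\leq\hat N$ from Lemma \ref{lem:locextstructure}.

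For the global side, fix $\chi\in\Irr_{\ell',ext}(G)$. Since $G$ is quasi-simple, hence perfect, $\det(\chi)=1_G$ and $o(\chi)=1$, so hypothesis (i) of Lemma \ref{lem:extnsinitial_var} holds trivially. Apply the lemma with $X=G$ and $Y=\hat Y=(GE)_\chi$. The quotient $Y/X$ embeds into $E=\langle\Gamma,F_p\rangle$; since $F_p$ normalizes $\Gamma$, and $\Gamma$ is cyclic of order at most $2$ outside type $\tD_4$ (where it is $S_3$, whose Sylow $3$-subgroup is characteristic), the Sylow $\ell$-subgroup of $E$ is normal for every odd $\ell\neq p$. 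Intersecting with $Y/X$ gives a normal Sylow $\ell$-subgroup there, and Lemma \ref{lem:extnsinitial_var} produces an extension $\Phi_{glo}(\chi)\in\Irr((GE)_\chi)$ with $((GE)_\chi\times\galh_0)_{\Phi_{glo}(\chi)}=((GE)_\chi\times\galh_0)_\chi$. Via $\hat N\leq GE$, any $\alpha\in(\hat N\galh_0)_\chi$ then fixes $\Phi_{glo}(\chi)$ outright, so $[\Phi_{glo}(\chi),\alpha]=1$.

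For the local side, Lemma \ref{lem:locextstructure} provides $\hat E\leq\hat N$ with $\hat N=N\hat E$ and $\hat E\cap N$ an $\ell'$-group. I apply Lemma \ref{lem:extnsinitial_var}(ii) with $X=N$, $Y=\hat Y=\hat N$, and $\hat K=\hat E$; the quotient $\hat N/N$ embeds into $E$ and therefore has a normal Sylow $\ell$-subgroup by the same structural argument as in the global case. The lemma then yields $\Phi_{loc}(\psi)\in\Irr(\hat N_\psi)$ whose stabilizer in $\hat N\times\galh_0$ equals that of $\psi$, so $[\Phi_{loc}(\psi),\beta]=1$ for every $\beta\in(\hat N\galh_0)_\psi$.

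For the concluding ``in particular'' claim, once conditions (\ref{extmap}) and (\ref{transversals}) of Theorem \ref{thm:RSST} hold, the transversals $\mathbb T$ and $\mathbb T'$ consist of characters that, by assumption, extend to their inertia groups, and hence lie in $\Irr_{\ell',ext}(G)$ and $\Irr_{\ell',ext}(N)$; the maps constructed above may then be used as $\Phi_{glo}$ and $\Phi_{loc}$. For any pair $(\chi,\psi)$ matched under $\wt\Omega$ and any $\alpha\in(\hat N\galh_0)_\psi$, both scalars $[\Phi_{glo}(\chi),\alpha]$ and $[\Phi_{loc}(\psi),\alpha]$ equal the trivial character and so are equal, giving (\ref{extcond}). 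The main obstacle is not deep: it amounts to the case-by-case verification that the Sylow $\ell$-structure of $\hat N/N$ and of $(GE)_\chi/G$ behaves as required, made essentially routine by Lemma \ref{lem:locextstructure} on the local side and by the explicit structure of $E$ on the global side.
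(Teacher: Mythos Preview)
Your approach is essentially the same as the paper's: apply Lemma~\ref{lem:extnsinitial_var}(i) globally (using that $G$ is perfect so $o(\chi)=1$) and Lemma~\ref{lem:extnsinitial_var}(ii) locally (with $\hat K=\hat E$ supplied by Lemma~\ref{lem:locextstructure}), after observing that $E$---and hence any subquotient relevant here---has a normal Sylow $\ell$-subgroup.

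There is, however, a small but genuine slip in your global application. You take $Y=\hat Y=(GE)_\chi$, so the lemma only yields
\[
\bigl((GE)_\chi\times\galh_0\bigr)_{\Phi_{glo}(\chi)}=\bigl((GE)_\chi\times\galh_0\bigr)_\chi.
\]
Your deduction ``via $\hat N\leq GE$, any $\alpha\in(\hat N\galh_0)_\chi$ then fixes $\Phi_{glo}(\chi)$'' tacitly assumes $(\hat N\times\galh_0)_\chi\subseteq(GE)_\chi\times\galh_0$, i.e.\ that whenever $(n,\sigma)$ fixes $\chi$ one already has $n\in(GE)_\chi$. That need not hold: nothing rules out $\chi^n=\chi^{\sigma^{-1}}\neq\chi$. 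The remedy is immediate and is exactly what the paper does (and what you correctly do on the local side with $Y=\hat Y=\hat N$): take $Y=\hat Y=GE$. Then $Y_\chi=(GE)_\chi$ as before, the hypothesis ``$\chi$ extends to $Y_\chi$'' is unchanged, but now the lemma gives
\[
(GE\times\galh_0)_{\Phi_{glo}(\chi)}=(GE\times\galh_0)_\chi,
\]
which contains $(\hat N\times\galh_0)_\chi$ since $\hat N\leq GE$, and your conclusion $[\Phi_{glo}(\chi),\alpha]=1$ follows.
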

\begin{proof}
Note that if $G\neq \type{D}_4(q)$, then the group $E$, and hence the group $\hat E$ from Lemma \ref{lem:locextstructure}, is abelian. If instead $G=\type{D}_4(q)$, then $E\cong S_3\times \langle F_p\rangle$ has a normal Sylow $\ell$-subgroup for $\ell\geq 3$. 

Let $\chi\in\Irr_{\ell'}(G)$ extend to $(GE)_\chi$ and let $\psi\in\Irr_{\ell'}(N)$ extend to $\hat N_\psi$. Since $G$ is perfect by assumption, the determinantal order of $\chi$ is 1.
On the other hand, Lemma \ref{lem:locextstructure} guarantees  the existence of $\hat K\lhd \hat N$ as in assumption (ii) in Lemma \ref{lem:extnsinitial_var} applied to $X:=N$ and $Y=\hat Y:=\hat N$, and we obtain the desired extensions from Lemma \ref{lem:extnsinitial_var}. 
\end{proof}

As pointed out in \cite{RSST}, the ``extension part" is what remains to be seen for type $\tA$ groups for the inductive Galois--McKay condition, so we now obtain the inductive Isaacs--Navarro condition in this case.

\begin{corollary}\label{cor:typeA}
  The inductive Isaacs--Navarro condition holds for  groups of Lie type $\tA$.  
\end{corollary}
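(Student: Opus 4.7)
The plan is to verify the three conditions of Theorem \ref{thm:RSST} for type $\tA$, with the first two conditions imported from \cite{RSST} and the third supplied by Corollary \ref{cor:extensionpart}. First, I would invoke Lemma \ref{lem:nonlie} to dispense with the exceptional cases, reducing to the setting of Notation \ref{not:lie type} with $G = \bG^F$ quasisimple, $\bG$ simply connected of type $\tA$, $\ell$ odd, and $\ell \neq p$.

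Next I would appeal to the work \cite{RSST}, which in the course of reducing the McKay--Navarro conjecture for type $\tA$ to its ``extension part'' already constructs the $(\Irr(\wt N/N)\rtimes \widehat N\galh_\ell)$-equivariant extension map $\wt\Lambda$ for $\mathfrak{C}$ with respect to $\wt L \lhd \wt N$, together with the $(\galh_\ell \times \widehat N)$-stable transversals $\mathbb{T}$ and $\mathbb{T}'$ of $\Irr_{\ell'}(G)$ and $\Irr_{\ell'}(N)$, and extension maps $\Phi_{glo}$, $\Phi_{loc}$ on the global and local sides. Since $\galh_0 \leq \galh_\ell$, the full $\galh_\ell$-equivariance restricts automatically to the required $\galh_0$-equivariance, verifying conditions (\ref{extmap}) and (\ref{transversals}) of Theorem \ref{thm:RSST}.

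The final step would be to apply Corollary \ref{cor:extensionpart}. Because $G$ is quasi-simple, every character in $\Irr_{\ell'}(G)$ has trivial determinantal order, so assumption (i) of Lemma \ref{lem:extnsinitial_var} holds globally; on the local side, Lemma \ref{lem:locextstructure} provides the complement $\hat K \leq \hat N$ satisfying assumption (ii). Corollary \ref{cor:extensionpart} then delivers extension maps with trivial associated linear characters, and these may be used in place of $\Phi_{glo}$ and $\Phi_{loc}$ from the previous step, which immediately yields condition (\ref{extcond}) of Theorem \ref{thm:RSST}. Applying Theorem \ref{thm:RSST} then gives the inductive Isaacs--Navarro condition for type $\tA$.

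I do not anticipate a genuine obstacle, as the argument assembles pieces already in place. The only point that deserves attention is the compatibility between the transversals furnished by \cite{RSST} and the canonical extensions from Corollary \ref{cor:extensionpart}: one must ensure that replacing the extension maps by the canonical ones preserves the stability properties demanded in condition (\ref{transversals}). This is routine because the extensions of Corollary \ref{cor:extensionpart} are pinned down uniquely via \cite[Lem.~6.24]{Isa} and \cite[Lem.~4.6]{birtethesis} from the trivial-determinantal-order hypothesis, so they inherit stability from stability of the underlying characters in $\mathbb{T}$ and $\mathbb{T}'$.
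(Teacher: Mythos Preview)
Your proposal is correct and follows essentially the same route as the paper: invoke \cite[Secs.~5,~6]{RSST} for conditions (\ref{extmap}) and (\ref{transversals}) of Theorem \ref{thm:RSST}, then apply Corollary \ref{cor:extensionpart} for condition (\ref{extcond}), and conclude via Theorem \ref{thm:RSST}. Your final paragraph on compatibility is unnecessary (and the uniqueness claim there is not quite accurate, since Lemma \ref{lem:extnsinitial_var} uses a coprimality argument rather than a canonical choice), because Corollary \ref{cor:extensionpart} already states explicitly that (\ref{extcond}) follows once (\ref{extmap}) and (\ref{transversals}) are in place.
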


\begin{proof}
In \cite[Secs.~5,~6]{RSST} it is shown that conditions (\ref{extmap}) and (\ref{transversals}) of Theorem \ref{thm:RSST} hold for groups of type $\tA$.  
By Corollary \ref{cor:extensionpart},   condition (\ref{extcond}) of Theorem \ref{thm:RSST} also holds. In particular, the assumptions of Theorem \ref{thm:RSST} are all satisfied and thus the inductive Isaacs--Navarro condition holds in this case.
\end{proof}

From Corollaries \ref{cor:extensionpart} and \ref{cor:typeA}, together with Theorem \ref{thm:RSST}, it follows that to prove Theorem \ref{thm:main}, it now suffices to find the equivariant extension maps with respect to $\wt{L}\lhd\wt{N}$ and the $\hat N\galh_0$-stable transversals described in (\ref{extmap}) and (\ref{transversals}) from Theorem \ref{thm:RSST}, for groups not of type $\tA$.

\subsection{The Transversals}

We next aim to obtain the transversals needed for part (\ref{transversals}) of Theorem \ref{thm:RSST}.

First, we recall here the notation $E:=E(\bG^F)$. Further, we recall that the irreducible characters of $G=\bG^F$ are partitioned into sets $\mathcal{E}(G, s)$, called rational Lusztig series, labeled by semisimple elements $s\in G^\ast$, up to $G^\ast$-conjugacy. In what follows we will often use that $\wt{G}E\times \galh_0$ permutes these series in a natural way (see, e.g. \cite[Prop.~7.2]{Tay} and \cite[Lem.~3.4]{SFT18}). In particular, if $\sigma\in \galh_0$, we have $\mathcal{E}(G, s)^\sigma=\mathcal{E}(G, s^k)$, where $\sigma$ acts as the $k$-power map on $|s|$-th roots of unity. 

We first obtain the required global transversal.

\begin{lemma}\label{lem:glob transversal}
    There exists an $(\mathcal{H}_0 \times E)$-stable $\tilde{G}$-transversal $\mathbb{T}$ of $\Irr_{\ell'}(G)$ such that every $\chi \in \mathbb{T}$ has an extension $\hat{\chi} \in \Irr(G E_\chi)$ with $(E \times \mathcal{H}_{0})_{\hat{\chi}}=(E \times \mathcal{H}_{0})_{\chi}$.
\end{lemma}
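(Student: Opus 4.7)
The plan is to construct $\mathbb{T}$ using Jordan decomposition of characters and then to derive the extension statement from Lemma \ref{lem:extnsinitial_var}. Recall that $\Irr(G)$ is partitioned into rational Lusztig series $\mathcal{E}(G,s)$ indexed by $G^\ast$-conjugacy classes of semisimple elements $s \in G^\ast$, and that $\widetilde{G}$, $E$, and $\mathcal{H}_0$ act on $\Irr(G)$ in pairwise commuting ways, all permuting these series compatibly. Via Jordan decomposition, $\mathcal{E}(G,s)$ corresponds to unipotent characters of $\C_{G^\ast}(s)^F$, and a $\widetilde{G}$-orbit inside $\mathcal{E}(G,s)$ is parametrized by the $\widetilde{G}^\ast$-class of a pair $(s,\psi)$.

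First, I would choose a set $\mathcal{S}$ of representatives of $\widetilde{G}^\ast$-conjugacy classes of semisimple elements $s \in G^\ast$ for which $\mathcal{E}(G,s) \cap \Irr_{\ell'}(G) \neq \emptyset$. Both $E$ and $\mathcal{H}_0$ preserve $\widetilde{G}^\ast$-conjugacy, so $\mathcal{S}$ can be chosen stable under $E \times \mathcal{H}_0$. For each $s \in \mathcal{S}$ I would then select a $G^\ast$-class representative $s'$ within the $\widetilde{G}^\ast$-orbit of $s$ and an $\ell'$-degree unipotent character $\psi$ of $\C_{G^\ast}(s')^F$ so that the resulting Jordan correspondent $\chi_{s',\psi} \in \mathcal{E}(G,s') \cap \Irr_{\ell'}(G)$ has $\widetilde{G}$-orbit stabilized by $(E \times \mathcal{H}_0)_s$. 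Since $\mathcal{H}_0$ acts trivially on unipotent characters (as they are rational), and the $E$-equivariant Jordan decompositions of Cabanes--Sp{\"a}th and Taylor provide an $E_s$-equivariant bijection between $\mathcal{E}(G,s')$ and unipotent characters of $\C_{G^\ast}(s')^F$, this choice can be made consistently. The collection $\mathbb{T} = \{\chi_{s',\psi} : s \in \mathcal{S}\}$ then forms the desired $(\mathcal{H}_0 \times E)$-stable $\widetilde{G}$-transversal of $\Irr_{\ell'}(G)$.

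For the extension property, I apply Lemma \ref{lem:extnsinitial_var} with $X = G$ and $Y = \hat{Y} = GE$. Since $G$ is quasisimple and hence perfect, each $\chi \in \Irr(G)$ has trivial determinantal character, so hypothesis (i) is satisfied. Moreover, for $\ell$ odd, $GE/G \cong E$ has a normal Sylow $\ell$-subgroup: either $E$ is abelian, or $G$ is of type $\tD_4$, in which case $E \cong S_3 \times \langle F_p\rangle$ still has a normal Sylow $\ell$-subgroup for all odd $\ell$. Since $\chi$ extends to $(GE)_\chi = GE_\chi$ by existing extendibility results for quasisimple groups of Lie type (Malle, Sp{\"a}th, Cabanes--Sp{\"a}th), Lemma \ref{lem:extnsinitial_var} yields the required extension $\hat{\chi} \in \Irr(GE_\chi)$ with $(E \times \mathcal{H}_0)_{\hat{\chi}} = (E \times \mathcal{H}_0)_{\chi}$.

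The main obstacle will be the consistent choice of Jordan decomposition representatives in types $\tB$, $\tC$, $\tD$, and the exceptional types, where the $E$-equivariant Jordan decomposition involves delicate sign choices that must be shown simultaneously compatible with the $\mathcal{H}_0$-action on semisimple labels. This is exactly the situation addressed by the results of \cite{RSST} combined with the work of Cabanes--Sp{\"a}th and Taylor on equivariant Jordan decomposition.
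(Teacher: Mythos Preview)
Your extension argument via Lemma~\ref{lem:extnsinitial_var} with $X=G$, $Y=\hat Y=GE$ and hypothesis~(i) is correct and coincides with the paper's treatment.

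The transversal construction, however, has a genuine gap. Your claim that $\wt G$, $E$, and $\galh_0$ act on $\Irr(G)$ in pairwise commuting ways is false: when $\bG$ is of type $\tA$, $\tD$, or $\tE_6$, the graph automorphisms in $E$ act nontrivially on $\wt G/G\Z(\wt G)$, so the $\wt G$- and $E$-actions do not commute. More fundamentally, the existence of an $(\galh_0\times E)$-stable $\wt G$-transversal amounts to showing that, for suitable $\chi$, whenever $\sigma\in\galh_0\times E$ stabilizes the $\wt G$-orbit of $\chi$ it already stabilizes $\chi$ itself, i.e.\ $(\wt G\galh_0 E)_\chi=\wt G_\chi(\galh_0 E)_\chi$. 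Your Jordan-decomposition sketch does not establish this: rationality of unipotent characters only tells you that $\galh_0$ fixes the $\wt G$-orbit (equivalently, the character of $\wt G$ lying above $\chi$), not that it fixes $\chi$ within that orbit. Equivariant Jordan decomposition in the sense of \cite{SV20} is proved for connected centre; for $G$ itself the fibres of the correspondence are precisely the $\wt G$-orbits, so the argument is circular.

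The paper instead starts from the already existing $E$-stable $\wt G$-transversal of \cite[Thm.~2.18]{CS25} and proves the required splitting by reducing to $\sigma$ of $\ell$-power order. When $\ell$ is coprime to $|\wt G/G\Z(\wt G)|$ (so not type $\tA$, and not $\ell=3$ with $G=\tE_6(\pm q)$ or $\tD_4(q)$), a short orbit-counting argument suffices, using only that $\wt G$ commutes with $\galh_0\times E_\ell$. The residual cases $\tE_6(\pm q)$ and $\tD_4(q)$ at $\ell=3$ require an explicit analysis of the relevant Lusztig series (quasi-isolated $s$ of order~$3$, respectively involutions $s$ with centraliser ${}^\pm\tA_3$): one locates a canonical element in each $\wt G$-orbit (the $\gamma$-fixed one) and argues that $\galh_0$ must preserve it. Type~$\tA$ is handled separately via \cite[Thm.~5.8]{RSST}. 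Your final paragraph acknowledges that the hard cases are unresolved; these case analyses are exactly where the content of the lemma lies.
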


\begin{proof}
By \cite[Thm.~2.18]{CS25} there exists an $E$-stable $\tilde{G}$-transversal $\mathbb{T}_0$ of $\Irr(G)$ such that each $\chi\in\mathbb{T}_0$ extends to $GE_\chi$. To obtain our desired transversal, it suffices to show that every character $\chi \in \mathbb{T}_0\cap \Irr_{\ell'}(G)$ satisfies $(\tilde{G} \mathcal{H}_0 E)_\chi= \tilde{G}_\chi (\mathcal{H}_0 E)_\chi$. (Indeed, let $\mathcal{X}$ denote the set of characters $\chi$ such that $\chi$ extends to $GE_\chi$ and $(\tilde{G} \mathcal{H}_0 E)_\chi= \tilde{G}_\chi (\mathcal{H}_0 E)_\chi$. Then $\mathcal{X}$ is $\galh_0 E$-stable and for $\chi\in\mathcal{X}$, any two distinct characters in the $\galh_0E$-orbit of $\chi$ must lie in distinct $\wt{G}$-orbits. Further, if $\chi, \psi\in\mathcal{X}$ are in distinct $\wt{G}\galh_0E$-orbits, then their $\galh_0E$-orbits intersect distinct $\wt{G}$-orbits. With this, we are able to construct an $\galh_0E$-stable transversal such that each $\chi\in\mathbb{T}$ extends to $GE_\chi$, by taking $\mathbb{T}$ to be the union of $\galh_0E$-orbits of characters in $\mathbb{T}_0\cap \Irr_{\ell'}(G)$ from distinct $\wt{G}\galh_0E$-orbits, once we know $\mathbb{T}_0\cap \Irr_{\ell'}(G)\subseteq \mathcal{X}$.)

 Now, assume that $\sigma \in \mathcal{H}_{0}  E$ stabilizes the $\tilde{G}$-orbit of a character $\chi$ in $\mathbb{T}_0\cap\Irr_{\ell'}(G)$. We claim that $\chi$ is also $\sigma$-stable. If $\sigma \in E$ this follows from the fact that the transversal $\mathbb{T}_0$ is $E$-stable. We can therefore assume that $\sigma \notin E$. As $\mathcal{H}_0$ is an $\ell$-group, we can further assume that $\sigma$ has $\ell$-power order. (Write $\sigma=\sigma_\ell \sigma_{\ell'}$ where $\sigma_\ell$ is the $\ell$-part and $\sigma_{\ell'}$ the $\ell'$-part of $\sigma$. Then there is some $e \geq 0$ such that $\sigma^{\ell^e}=\sigma_{\ell'}$. Hence, $\sigma_{\ell'} \in E$ and still stabilizes the $\tilde{G}$-orbit of $\chi$. Thus, $\sigma_{\ell'}$ stabilizes $\chi$.) Let $E_\ell\in\Syl_\ell(E)$.

    Assume first that $\G$ is of type $\tA$. Then observe that any element of $\mathcal{H}_0$ acts trivially on $\ell'$-roots of unity; so in particular on $p$th roots of unity. Then the statement follows from Lemma \ref{lem:extnsinitial_var} and \cite[Thm.~5.8]{RSST}.

   Then we now assume that $\bG$ is not of type $\tA$. Assume first that $\ell \neq 3$ if $G$ is of type $\tE_6(\pm q)$ or of type $\tD_4(q)$. Observe that the length of the $\tilde{G}$-orbit of $\chi$ divides $|\tilde{G}/G \Z(\tilde{G})|$. As $\ell$ is odd, the latter is coprime to $\ell$. By coprimality, it therefore follows that the $\tilde{G}$-orbit of $\chi$ has a $\sigma$-fixed point. Note that in this case, the actions of $\tilde{G}$ and $\mathcal{H}_0 \times E_\ell$ on $\Irr(G)$ commute. 
   Then
    it follows that $\sigma$ fixes $\chi$ as well. 
    
    Next suppose that $\ell=3$ and $G=\tD_4(q)$.
    By the degree properties of Jordan decomposition, it follows that $\C_{G^\ast}(s)$ must contain a Sylow $3$-subgroup of $G^\ast$. Moreover, we may assume that $\C_{\G^\ast}(s)$ is disconnected as otherwise every character in $\mathcal{E}(G,s)$ is $\tilde{G}$-stable. We consider the list in \cite{Luebeck} of possible centralizers. Note that $\Phi_3 \Phi_6$ divides the polynomial order of $\tD_4(q)$. Therefore, the polynomial order of the centralizer of any $3$-central element must contain $\Phi_3$ or $\Phi_6$  (if $q\equiv 1 \mod 3$, resp. $q\equiv 2 \mod 3$) since $|\C_{\bG^\ast}(s)/\C_{\bG^\ast}^\circ(s)|$ is prime to $3$.  On the other hand, $\Phi_1^4$ resp. $\Phi_2^4$ divides the polynomial order of $\tD_4(q)$. These considerations show that only centralizers of type $\tA_3(q) \Phi_1.2$ resp. $\tw{2}\tA_3(q) \Phi_2 .2$ are relevant. These centralizers come from involutions $s \in G^\ast$. In particular, the corresponding Lusztig series $\mathcal{E}(G,s)$ is $\mathcal{H}_0$-stable. The characters in this Lusztig series are, up to diagonal automorphisms, determined by their degree. As $\mathcal{H}_0$ is a $3$-group and $A(s):=\C_{\bG^\ast}(s)/\C_{\bG^\ast}^\circ(s)$ has size $2$ (so that the diagonal automorphisms act with order two on these sets) it follows that every character in $\mathcal{E}(G,s)$ is necessarily $\mathcal{H}_0$-stable. This gives the claim in this case.

    It thus remains to consider the case where $\ell=3$ and $G=\tE_6(\pm q)$. Again 
    $\C_{G^\ast}(s)$ must contain a Sylow $3$-subgroup of $G^\ast$ and $\C_{\G^\ast}(s)$ can be assumed to be disconnected as otherwise every character in $\mathcal{E}(G,s)$ is $\tilde{G}$-stable. By considering the list in \cite{Luebeck} this shows that $s$ is quasi-isolated of order $3$ with centralizer of rational type $\tA_2(\pm q)^3.3$. Next we observe that for such an $s$, $s^k$ is $G^\ast$-conjugate to $s$ for each $k$ coprime to $3$. In particular, $\mathcal{E}(G, s)$ is $\galh_0$-stable and $\gamma$-stable, where $\gamma\in E$ is an order-two graph automorphism. By the degree properties of Jordan decomposition, the Jordan correspondent of $\chi$ in $\mathcal{E}(\C_{G^\ast}(s),1)$ must also have degree prime to $3$, so must lie above a character of $\C^\circ_{\G^\ast}(s)^F$ stable under the action of $\C_{\G^\ast}(s)^F/\C^\circ_{\G^\ast}(s)^F$, since the latter has size $3$. That is, the restriction to each copy of $\tA_2(\pm q)$ is the same. 
    In particular, since the unipotent characters of a group of type $\tA_2(\pm q)$ have distinct degrees, we see the character $\chi$ is uniquely determined, up to $\tilde{G}$-conjugation, by its degree in $\mathcal{E}(G,s)$. As $\gamma$ acts faithfully on $\tilde{G}/G \Z(\tilde{G})$ we therefore deduce that there exists a unique $\gamma$-stable character $\chi_0$ in the $\tilde{G}$-orbit of $\chi.$ As the actions of 
    $\galh_0 $ and $\gamma$ commute, $\galh_0$ must send $\gamma$-stable characters to $\gamma$-stable characters. Hence, $\chi_0$ must be $\galh_0$-stable. As $\chi$ is $\tilde{G}$-conjugate to $\chi_0$, it follows that $\chi$ is $\galh_0$-stable as well. This completes the claim.

    The last statement then immediately follows from Corollary \ref{cor:extensionpart}.
\end{proof}

We next obtain our desired local transversal. 

\begin{lemma}\label{lem:loc transversal}
Assume that $\bG$ is not of type $\tA$. Let $D\in\Syl_\ell(N)$ set $M=N$ if $d$ is regular and $M=\N_G(D)L$ otherwise.
    There exists an $\N_{\mathcal{H}_0 \times G E}(M)$-stable $\N_{\tilde{G}}(M)$-transversal $\mathbb{T}'$ of $\Irr_{\ell'}(M)$ such that every $\chi \in \mathbb{T}'$ has an extension $\hat{\chi} \in \Irr(\N_{G E_\chi}(M))$. {Further,  
    this extension can be chosen such that} $\N_{GE \times \mathcal{H}_{0}}(M)_{\hat{\chi}}=\N_{GE \times \mathcal{H}_{0}}(M)_{\chi}$.
\end{lemma}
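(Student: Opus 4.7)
My plan is to mirror the strategy of Lemma \ref{lem:glob transversal} at the local level, relying on the extension machinery already set up for $L\lhd N$ together with Corollary \ref{cor:extensionpart}. The three ingredients to assemble are: an initial equivariant transversal with the extension property; an upgrade to $\galh_0$-equivariance via coprimality; and the trivial-scalar extension from Corollary \ref{cor:extensionpart}.

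First I would construct an $\N_{GE}(M)$-stable $\N_{\wt G}(M)$-transversal $\mathbb{T}_0$ of $\Irr(M)$ in which every $\chi$ extends to $\N_{GE_\chi}(M)$. For $M=N$, this is the local counterpart of \cite[Thm.~2.18]{CS25}: one builds it from $\widehat{N}$-equivariant extension maps for $L\lhd N$ (as in the hypothesis of Lemma \ref{lem:lift ext map}, cf. \cite{spa09,spath10}) combined with the explicit cross-section of $\widehat{N}/N$ given by $\hat E$ in Lemma \ref{lem:locextstructure}. For $M=\N_G(D)L$ I would transfer the transversal from $N$ through a Clifford correspondence: since $\N_G(D)\leq N$ normalizes $L$ and the Sylow description $D=\Z(L)_\ell\rtimes V_0$ with $V_0\cap L=1$ of Corollary \ref{cor:structure sylow} sits compatibly in $N$, the $\ell'$-degree characters in $\Irr(M\mid\lambda)$ and $\Irr(N\mid\lambda)$ are parametrized compatibly for each $\lambda\in\subL$, and $\widehat N$-equivariance transfers.

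Next, to promote $\mathbb{T}_0\cap\Irr_{\ell'}(M)$ to an $(\galh_0\times\N_{GE}(M))$-stable transversal, I would verify, exactly as in Lemma \ref{lem:glob transversal}, the equality
\[
\N_{\wt G\galh_0 GE}(M)_\chi=\N_{\wt G}(M)_\chi\cdot\N_{\galh_0 GE}(M)_\chi
\]
for each $\chi\in\mathbb{T}_0\cap\Irr_{\ell'}(M)$. The same formal reduction puts us in the case $\sigma\in\galh_0$ of $\ell$-power order stabilizing the $\N_{\wt G}(M)$-orbit of $\chi$. The length of this orbit divides $|\N_{\wt G}(M)/(\N_G(M)(\Z(\wt G)\cap\N_{\wt G}(M)))|$, which embeds into $\wt G/G\Z(\wt G)$ and so is an $\ell'$-group outside the exceptional pairs treated in Lemma \ref{lem:glob transversal}. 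Since the actions of $\N_{\wt G}(M)$ and $\galh_0$ on $\Irr(M)$ commute, coprimality then produces a $\sigma$-fixed point in the orbit, which by the uniqueness in $\mathbb{T}_0$ forces $\chi^\sigma=\chi$.

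The main obstacle is the pair of exceptional situations $\ell=3$ with $G\in\{\tD_4(q),\tE_6(\pm q)\}$, where the coprimality step fails. For these I would descend via Clifford theory to $\Irr_{\ell'}(L)$: each $\chi\in\Irr_{\ell'}(M)$ is parametrized by a pair $(\lambda,\eta)$ with $\lambda\in\subL$ and $\eta\in\Irr(M_\lambda/L)$, and the $(\galh_0\times\wt G)$-action on $\chi$ tracks that on $\lambda$ up to the $\ell$-group $\Irr(M_\lambda/L)$. Rational Lusztig series of $L$ are labelled by semisimple $L^\ast$-classes, and the same centralizer-type analysis used globally (involution classes of centralizer type $\tA_3\Phi_\varepsilon.2$ for $\tD_4$; quasi-isolated classes of rational type $\tA_2(\pm q)^3.3$ for $\tE_6$) singles out the only $\lambda$'s that simultaneously have $\ell'$-degree and nontrivial $\wt L$-orbit. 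The Jordan-correspondent degree argument of Lemma \ref{lem:glob transversal} then transfers verbatim to show such $\lambda$, and hence $\chi$, is $\galh_0$-stable. Finally, the desired extension $\hat\chi$ satisfying $\N_{GE\times\galh_0}(M)_{\hat\chi}=\N_{GE\times\galh_0}(M)_\chi$ is supplied by the obvious local analogue of Corollary \ref{cor:extensionpart}, which applies verbatim to $M$ in both the regular and non-regular cases using the subgroup $\hat E$ of Lemma \ref{lem:locextstructure} and Lemma \ref{lem:extnsinitial_var}.
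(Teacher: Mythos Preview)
Your approach diverges from the paper's in a way that creates real difficulties, and the paper's route is substantially simpler.

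The paper does \emph{not} build the local transversal from scratch via Clifford theory over $L$. Instead it invokes the inductive McKay condition \cite[Thm.~B]{CS25}: the $\Aut(G)_D$-equivariant McKay bijection $\Irr_{\ell'}(G)\to\Irr_{\ell'}(M)$ transports the global transversal $\mathbb{T}$ of Lemma~\ref{lem:glob transversal} (together with its extension property and compatibility with diagonal automorphisms) directly to an $\N_{GE}(M)$-stable $\N_{\wt G}(M)$-transversal $\mathbb{T}'$ of $\Irr_{\ell'}(M)$. The coprimality upgrade to $\galh_0$-stability then proceeds as you outline. For the exceptional cases $\ell=3$ with $G\in\{\tE_6(\pm q),\tD_4(q)\}$, the paper again uses this equivariant bijection to transport \emph{structural} information (for $\tE_6$: each non-$\N_{\wt G}(M)$-stable orbit has a unique $\gamma$-stable member; for $\tD_4$: such orbits have size~$2$ and are $\N_{GE}(M)$-stable), after which the argument of Lemma~\ref{lem:glob transversal} runs verbatim on the local side. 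The final extension statement uses Lemma~\ref{lem:locextstructure}/Corollary~\ref{cor:structure sylow} with Lemma~\ref{lem:extnsinitial_var}, noting $\N_{GE}(M)=M\hat E$ with $\hat E\cap M$ an $\ell'$-group.

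Your handling of the exceptional cases has a genuine gap. When $\ell=3$ we have $d\in\{1,2\}$, so $d$ is regular and $\bL$ is a torus; hence $L$ is abelian and $L^\ast$ is a torus. The centralizer types $\tA_3(\pm q)\Phi_\varepsilon.2$ and $\tA_2(\pm q)^3.3$ you invoke are centralizers in $G^\ast$, not in $L^\ast$, and there is no analogous Lusztig-series degree analysis available for the torus $L$. Moreover, your claim that the action on $\chi$ is controlled by that on $\lambda$ ``up to the $\ell$-group $\Irr(M_\lambda/L)$'' is incorrect: here $M=N$ and $N_\lambda/L=W(\lambda)$ is a subgroup of the relative Weyl group, not an $\ell$-group. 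Without the McKay bijection to carry over the global orbit structure, you have no mechanism on the local side to isolate the unique $\gamma$-stable conjugate (for $\tE_6$) or to pin down the orbit sizes (for $\tD_4$), and the argument does not close.
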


\begin{proof}
    The inductive McKay condition, see \cite[Thm.~B]{CS25}, together with the transversal $\mathbb{T}$ from Lemma \ref{lem:glob transversal}, automatically gives an $\N_{G E }(M)$-stable $\N_{\tilde{G}}(M)$-transversal $\mathbb{T}'$ of $\Irr_{\ell'}(M)$ such that every $\chi \in \mathbb{T}'$ has an extension $\hat\chi$ to $\N_{G E_\chi}(M)$. Since $\bG$ is not of type $\tA$, we may argue just as in the proof of Lemma \ref{lem:glob transversal} to obtain the  $\mathcal{H}_0\N_{GE}(M)$-stable transversal, again using coprimality arguments when $\ell\neq 3$ or $G$ is not $\tE_6(\pm q)$ nor $\tD_4(q)$. 
    In the case of $\tE_6(\pm q)$ when $\ell=3$, using the $\Aut(G)_D$-equivariant bijection between $\Irr_{3'}(G)$ and $\Irr_{3'}(M)$ guaranteed by \cite[Thm.~B]{CS25}, we again see that the $3'$-degree characters of $M$ that are not $\N_{\wt{G}}(M)$-stable have a unique $\gamma$-stable $\N_{\wt{G}}(M)$-conjugate, so the same argument as in the proof of Lemma \ref{lem:glob transversal} applies. In the case of $\tD_4(q)$ when $\ell=3$, we similarly see using the equivariance from \cite[Thm.~B]{CS25} and the discussion in Lemma \ref{lem:glob transversal} of the characters in $\Irr_{3'}(G)$ that the $3'$-degree characters of $M$ that are not $\N_{\wt G}(M)$-stable are in $\N_{\wt{G}}(M)$-orbits of size two, and these orbits must be $\N_{GE}(M)$-stable. Then as in the proof of Lemma \ref{lem:glob transversal}, the characters must be $\galh_0$-stable. The last statement follows from using Lemma \ref{lem:locextstructure}, resp. Corollary \ref{cor:structure sylow}, together with Lemma \ref{lem:extnsinitial_var},  noting that $\N_{GE}(M)=M \hat{E}$ and $\hat{E} \cap M \leq \hat{E} \cap N$ is an $\ell'$-subgroup.
\end{proof}

\subsection{Extension maps}
With the previous sections, we are finally left to consider part (\ref{extmap}) of Theorem \ref{thm:RSST}.
We begin by recalling the following statement.

\begin{lemma}\label{lem:ext map equiv}
    There exists an $\hat{N}$-equivariant extension map $L \lhd N$ for an $\tilde{L}$-transversal $\mathbb{T}_0$ of $\Irr(L)$.
\end{lemma}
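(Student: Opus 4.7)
My plan is to derive this lemma by combining Späth's canonical extensions for $L \lhd N$ with a judicious choice of $\wt L$-transversal $\mathbb{T}_0$. The existence of an extension map for $L \lhd N$ (without any equivariance requirement) is classical: by \cite[Thm.~A]{spa09} (for $\bG$ not of type $\tD_{2n}$) together with \cite[Thm.~1.1]{spath10}, every $\la \in \Irr(L)$ extends to its stabilizer $N_\la$. These constructions are not arbitrary but rely on representatives in Tits' extended Weyl group $V_0$ via the canonical splitting $r\colon W_0 \to V_0$ and the Steinberg presentation of $\bG$; it is essentially this canonicity that one exploits to achieve equivariance.

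The first step is to select $\mathbb{T}_0$ so that it is $\hat{N}$-stable. Since $\hat{N}=\N_{GE}(\Levi)$ normalizes $\wt \Levi=\C_{\wt \bG}(\bS)$ and hence $\wt L$, the $\hat N$-action on $\Irr(L)$ permutes the $\wt L$-orbits. I would choose one representative from each $\hat N$-orbit of $\wt L$-orbits and close under $\hat N$-translation to obtain $\mathbb{T}_0$, at each stage adjusting the seed representative within its $\wt L$-orbit if necessary to make the orbit closure unambiguous.

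With $\mathbb{T}_0$ fixed, I would define $\Lambda$ on each $\hat N$-orbit by taking Späth's canonical extension on a seed $\la$ and propagating to the remaining members of the $\hat N$-orbit by transport of structure. Well-definedness reduces to verifying that the canonical extension of each seed $\la$ is stable under $\hat{N}_\la$. This stability is the crux of the argument, and is what has been systematically treated, type by type, in the verification of the inductive McKay condition in the literature: the regular/generic case in \cite{CS19}, type $\tD$ in \cite{S10a, S21D1, S21D2}, and the assembly in \cite{CS25}. I would invoke those results directly, or adapt their arguments using the structural decomposition $\hat N = N\hat E$ with $\hat E\cap N$ an $\ell'$-group provided by Lemma \ref{lem:locextstructure} of the present paper, together with the coprimality and Gallagher-uniqueness technique already used in the proof of Lemma \ref{lem:extnsinitial_var}.

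The main obstacle I anticipate is type $\tD$ with non-trivial graph automorphisms (including triality for $\tD_4$), where the action of $E$ on the Tits extended Weyl group is subtle and the Steinberg presentation must be modified as in \cite{S21D1,S21D2}. In our setting, however, this has been handled in precisely the form we need during the construction underlying Lemma \ref{lem:locextstructure}, so I expect the verification to go through by careful bookkeeping rather than requiring new ideas.
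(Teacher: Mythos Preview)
Your proposal is essentially correct and aligns with the paper's approach: both recognize that this lemma is already established in the literature, proved type by type, and simply needs to be cited. The paper's proof is nothing more than a list of references --- \cite[Lems.~8.1,~8.2, Prop.~9.2]{spa09} for exceptional types, \cite[Prop.~5.9]{CS17a} for type~$\tA$, the proof of \cite[Thm.~6.1]{CS17b} for type~$\tC$, \cite[Prop.~5.19]{CS19} for type~$\tB$, and \cite[Prop.~4.8, Thm.~6.9]{CS25} for type~$\tD$ --- whereas you supply more narrative about how the argument is organized (transport of structure from a seed, stability of the canonical extension under $\hat N_\la$). Your high-level description is accurate, and your list of sources overlaps the paper's, though it is less precise on types $\tA$, $\tB$, $\tC$.

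One caution: your suggested fallback of adapting the coprimality/Gallagher technique of Lemma~\ref{lem:extnsinitial_var} together with Lemma~\ref{lem:locextstructure} would not suffice here. That machinery is tailored to $\Irr_{\ell'}$-characters and relies on $\ell$-power-index quotients, whereas the present lemma concerns \emph{all} of $\Irr(L)$ (and is later used, via Lemma~\ref{lem:ext map prop}, even before restricting to $\ell'$-degree). So the direct citation route is the correct one; the alternative you sketch would only cover a subset of what is needed.
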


\begin{proof}
 For exceptional groups, this is contained in \cite[Lems.~8.1, 8.2, Prop.~9.2]{spa09}. For $\bG$ of type $\tA$, this is \cite[Prop.~5.9]{CS17a}. For type $\tC$, it is within the proof of \cite[Thm.~6.1]{CS17b}, and for type $\tB$ it is in \cite[Prop.~5.19]{CS19}.
 Finally, for type $\tD$, the statement follows from \cite[Prop.~4.8, Thm.~6.9]{CS25}.
\end{proof}

\begin{lemma}\label{lem:tildeL/LZ}
Assume that $\Levi\neq \bG$. Then $\wt L/L\Z(\wt L)$ has order dividing $2$.
\end{lemma}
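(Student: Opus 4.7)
The plan is to reduce the bound on $|\wt L/L\Z(\wt L)|$ to a bound on $|H^1(F,\Z(\bL))|$ via a Lang--Steinberg argument, and then to control this cohomology using the structure of $\bL = \C_{\bG}(\bS)$ as a proper Levi of the simply connected group $\bG$.

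First, I would establish the decompositions $\wt\bL = \bL\Z(\wt\bG)$ and $\Z(\wt\bL) = \Z(\bL)\Z(\wt\bG)$. The first follows because $\Z(\wt\bG) \leq \C_{\wt\bG}(\bS) = \wt\bL$ (as $\Z(\wt\bG)$ centralizes all of $\wt\bG \supseteq \bS$), combined with the regular-embedding identity $\wt\bG = \bG\Z(\wt\bG)$. For the second, writing an element of $\Z(\wt\bL)$ as $\ell w$ with $\ell \in \bL$ and $w \in \Z(\wt\bG)$, centrality in $\wt\bL$ forces $\ell \in \Z(\bL)$. In particular $\wt\bL = \bL\Z(\wt\bL)$ with $\bL \cap \Z(\wt\bL) = \Z(\bL)$.

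Next, I would apply $F$-fixed points to the short exact sequence
\[
1 \to \Z(\bL) \to \bL \times \Z(\wt\bL) \to \wt\bL \to 1,
\]
where the first map is $z \mapsto (z,z^{-1})$ and the second is $(\ell,z) \mapsto \ell z$. By Lang--Steinberg, $H^1(F, \bL) = 1$ since $\bL$ is connected. Moreover, the regular embedding forces $\wt\bG$ to have connected center, hence so does its Levi $\wt\bL$, so $\Z(\wt\bL)$ is a connected torus and $H^1(F, \Z(\wt\bL)) = 1$ as well. The long exact sequence then gives
\[
\wt L/(L \cdot \Z(\wt\bL)^F) \cong H^1(F, \Z(\bL)).
\]
Since $\Z(\wt\bL)^F \leq \Z(\wt L)$, this yields $|\wt L/L\Z(\wt L)| \leq |H^1(F,\Z(\bL))|$, and by Steinberg's formula $|H^1(F,\bH)| = |(\bH/\bH^\circ)^F|$ applied to $\bH = \Z(\bL)$ (whose identity component is a connected torus), this equals $|(\Z(\bL)/\Z^\circ(\bL))^F|$.

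It therefore suffices to show $|(\Z(\bL)/\Z^\circ(\bL))^F| \leq 2$ whenever $\bL = \C_\bG(\bS) \neq \bG$. Since $[\bL,\bL]$ is semisimple simply connected (as $\bG$ is simply connected), one has $\Z(\bL) = \Z^\circ(\bL) \cdot \Z([\bL,\bL])$, so $\Z(\bL)/\Z^\circ(\bL) \cong \Z([\bL,\bL])/(\Z([\bL,\bL]) \cap \Z^\circ(\bL))$. A direct inspection through the Lie-type classification then completes the bound: in type $\tA$ the quotient is trivial; in types $\tB$, $\tC$ and the exceptional types it has order at most $2$; and in type $\tD$---the principal case---one uses that $[\bL,\bL]$ has at most one factor of type $\tD_m$ (with center $\mu_2 \times \mu_2$ or $\mu_4$) and that $\Z^\circ(\bL)$ absorbs all but at most a $\mu_2$-quotient of this center. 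The main obstacle lies in this last verification for type $\tD$, where the naive bound $|\Z([\bL,\bL])|$ is not sharp enough and one must carefully analyze how the spin center sits inside $\Z^\circ(\bL)$.
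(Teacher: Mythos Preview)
Your reduction is correct and coincides with the paper's: both arrive at bounding $|\wt L/L\Z(\wt L)|$ by $|(\Z(\bL)/\Z^\circ(\bL))_F|$ (the paper cites \cite[Rem.~1.7.6]{GM20} for this, which is exactly your Lang--Steinberg computation). The difference lies in the case analysis. The paper opens with the observation that $\wt L/L\Z(\wt L)$ is a quotient of $\wt G/G\Z(\wt G)$---equivalently, $\Z(\bL)/\Z^\circ(\bL)$ is a quotient of $\Z(\bG)$---which instantly disposes of types $\tB$, $\tC$, $\tE_7$, $\tE_8$, $\tF_4$, $\tG_2$ (where $|\Z(\bG)|\le 2$) and leaves only $\tA$, $\tD$, $\tE_6$. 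You instead bound via $\Z([\bL,\bL])$, which is not obviously $\le 2$ in types $\tB$, $\tC$ (where $[\bL,\bL]$ can have several type-$\tA$ factors), so your ``direct inspection'' there would in effect re-derive the $\Z(\bG)$ bound. For $\tE_6$ the paper uses that the only non-toral $\bL$ is of type $\tA_1$ and then combines your $\Z([\bL,\bL])\cong C_2$ bound with the $\Z(\bG)\cong C_3$ bound to force triviality; your blanket claim ``exceptional types have order at most $2$'' needs this input. For type $\tD$, the paper carries out precisely the analysis you flag as the obstacle: embedding $\bG\hookrightarrow\overline\bG$ of type $\tB_n$ and checking that the generator $h_0=h_{e_n}(-1)$ of $\Z(\overline\bG)$ lies in $\Z^\circ(\bL_I)$, so that only a $C_2$ can survive in the quotient.
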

\begin{proof}
Note that  $\tilde{L}/L \Z(\tilde{L})$ is a quotient of $\tilde{G}/G \Z(\tilde{G})$, so it remains to consider the case that $\bG$ is of type  $\tA$, $\tD$, or $\tE_6$. Applying \cite[Rem.~1.7.6]{GM20} to $\Levi$, we have $\wt L/L\Z(\wt L)\cong (\Z(\Levi)/\Z^\circ(\Levi))_F$, the group of $F$-coinvariants. We proceed by analyzing each case.

In type $\tA$, we have $\Z(\Levi)$ is connected, as noted in the proof of \cite[Prop.~5.6]{CS17a}. If $\bG=\tE_6$, then either $d$ is regular, hence $\Levi=\Z(\Levi)$ is a torus, or $d=5$ if $F$ is untwisted (resp. $d=10$ if $F$ is twisted) and $\Levi$ is of type $\tA_1$. In the latter case, as $\Z(\Levi)/\Z^\circ(\Levi)$ is both a quotient of $\Z([\Levi,\Levi]) \cong C_2$ and $\Z(\bG)/ \Z^\circ(\bG) \cong C_3$ it follows that $\Z(\Levi)$ is again connected.

Finally, if $\bG$ is of type $\tD$, we embed $\bG\leq \overline\bG$ into a group $\overline{\bG}$ of simply connected type $\tB$, as in \cite[Sec.~2.E]{CS25}, and we keep the notation there. Let $h_0=h_{e_1}(-1)=h_{e_n}(-1)$ be as in \cite[2.24]{CS25}, so that $\Z(\overline\bG)$ is generated by $h_0$. Here $\Levi$ is $\bG$-conjugate to a Levi subgroup $\Levi_I$ of type $\tD_m$ for some $m$, and  $\Z^\circ(\Levi_I)=\langle h_{e_i}(t) \mid i > m, t \in \bar{\mathbb{F}}_p^\times \rangle$. Hence, it follows that $h_0\in\Z^\circ(\Levi_I)$, forcing $(\Z(\Levi)/\Z^\circ(\Levi))_F$ to be of size at most $2$.
\end{proof}

 From Lemmas \ref{lem:ext map equiv} and \ref{lem:tildeL/LZ} we obtain the following regarding the set $\subL$ defined in Notation \ref{not:lie type}.

\begin{lemma}\label{lem:ext map prop}
   There exists an $(\mathcal{H}_0 \times \hat{N})$-stable $\tilde{L}$-transversal $\mathbb{T}_1$ of $\subL$
   such that every character $\la \in \mathbb{T}_1$ extends to its inertia group in $\hat{N}$. 
\end{lemma}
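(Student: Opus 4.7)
The plan is to combine the $\hat N$-equivariant extension map from Lemma \ref{lem:ext map equiv} with a coprimality argument rooted in Lemma \ref{lem:tildeL/LZ}, broadly following the strategy of Lemma \ref{lem:glob transversal}. I may assume $\Levi \neq \bG$, since otherwise $N = G$, $\subL = \Irr_{\ell'}(G)$, and Lemma \ref{lem:glob transversal} together with Lemma \ref{lem:extnsinitial_var} already yields the claim. First take the $\tilde L$-transversal $\mathbb{T}_0$ of $\Irr(L)$ provided by Lemma \ref{lem:ext map equiv}: it is $\hat N$-stable and carries an $\hat N$-equivariant extension map for $L \lhd N$. Set $\mathbb{T}_2 := \mathbb{T}_0 \cap \subL$, which is itself an $\hat N$-stable $\tilde L$-transversal of $\subL$, since $\subL$ is preserved by both $\tilde L$ and $\hat N$.

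The central step is to verify the stabilizer identity
\[
(\tilde L\,\mathcal{H}_0\,\hat N)_\la \;=\; \tilde L_\la\cdot(\mathcal{H}_0\,\hat N)_\la \qquad \text{for every } \la \in \mathbb{T}_2,
\]
or equivalently that any $\tau \in \mathcal{H}_0\hat N$ stabilizing the $\tilde L$-orbit $\tilde L\cdot\la$ already fixes $\la$. I write $\tau = \tau_\ell\tau_{\ell'}$ in the direct product $\mathcal{H}_0 \times \hat N$; each factor is a power of $\tau$, so each stabilizes $\tilde L\cdot\la$. Since $\mathcal{H}_0$ is an $\ell$-group and $\ell$ is odd, $\tau_{\ell'}$ lies purely in $\hat N$, and the $\hat N$-stability of $\mathbb{T}_0$ then forces $\tau_{\ell'}\cdot\la = \la$. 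The $\ell$-part $\tau_\ell$ acts on the $\tilde L$-orbit, which by Lemma \ref{lem:tildeL/LZ} has size at most two, via a permutation of $\ell$-power order, hence trivially; so $\tau_\ell\cdot\la = \la$ as well and therefore $\tau\cdot\la = \la$. As in Lemma \ref{lem:glob transversal}, the exceptional cases where $\bG$ is of type $\tD_4$ or $\tE_6(\pm q)$ at $\ell = 3$ require a parallel bespoke treatment based on the relevant Lusztig series, but the coprimality step is the heart of the argument.

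With the stabilizer identity in hand, $\mathbb{T}_1$ is constructed as the union of the $\mathcal{H}_0\hat N$-orbits of representatives drawn from $\mathbb{T}_2$, one per $(\tilde L\,\mathcal{H}_0\,\hat N)$-orbit; the identity guarantees well-definedness as a $\tilde L$-transversal of $\subL$, and $(\mathcal{H}_0 \times \hat N)$-stability is built into the construction. For the required extension of each $\la \in \mathbb{T}_1$ to $\hat N_\la$, I apply Lemma \ref{lem:extnsinitial_var} to $L \lhd N \lhd \hat N$: the input extension of $\la$ to $N_\la$ is supplied by Lemma \ref{lem:ext map equiv}, and hypothesis (ii) is met with $\hat K := \hat V$ drawn from Lemma \ref{lem:locextstructure}, since $\hat N = L\hat V$ and $\hat V \cap L$ is a $2$-group, hence an $\ell'$-group as $\ell$ is odd. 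The main obstacle throughout is the stabilizer-decomposition step: a priori a mixed element $\sigma n$ with $\sigma \in \mathcal{H}_0$ and $n \in \hat N$ might stabilize $\tilde L\cdot\la$ without either $\sigma$ or $n$ individually doing so. This danger is dissolved precisely by passing to the $\ell$-/$\ell'$-decomposition of $\tau$ inside $\mathcal{H}_0 \times \hat N$, combined with the orbit-size bound of Lemma \ref{lem:tildeL/LZ}.
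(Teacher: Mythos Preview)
Your stabilizer-identity step and the construction of an $(\mathcal{H}_0\times\hat N)$-stable $\tilde L$-transversal are essentially what the paper does (and your mention of exceptional cases at $\ell=3$ is actually unnecessary here, since Lemma~\ref{lem:tildeL/LZ} already covers all cases with $\Levi\neq\bG$). The genuine gap is in the extension part. Your appeal to Lemma~\ref{lem:extnsinitial_var} with $X=L$, $Y=N$, $\hat Y=\hat N$ fails on three counts. First, the conclusion of that lemma is only an extension of $\la$ to $Y_\la=N_\la$ (with a stabilizer compatibility relative to $\hat Y\times\mathcal{H}_0$), not an extension to $\hat N_\la$; the ``input extension to $N_\la$'' you feed in is exactly what you get back out. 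Second, the hypothesis that $\hat Y/Y$ be an $\ell$-group is violated: $\hat N/N$ is a subquotient of $E=E(\bG^F)$, which is typically not an $\ell$-group. Third, the hypothesis that $Y/X=N/L$ have a normal Sylow $\ell$-subgroup is neither established nor true in general for the relative Weyl group.

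The paper obtains the extension to $\hat N_\la$ by an indirect route that you are missing: for $\la\in\mathbb{T}_0\cap\subL$ it passes to $\psi:=\Ind_{N_\la}^N(\Lambda(\la))\in\Irr_{\ell'}(N)$ and invokes Lemma~\ref{lem:loc transversal}, which furnishes an $\tilde N$-transversal of $\Irr_{\ell'}(N)$ whose members extend to their inertia groups in $\hat N$. One therefore finds an $\tilde L$-conjugate $\la'$ of $\la$ such that $\psi'=\psi^{\tilde l}$ extends to $\hat N_{\psi'}$. The nontrivial point is to pull this back: any $\hat\la'\in\Irr(\hat N_{\la'}\mid\Lambda(\la'))$ restricts to $e\,\Lambda(\la')$ for some $e\geq1$, and after reducing to $G\not\cong\tD_4(q)$ one uses that $\hat N/N$ is abelian together with the existence of the extension $\hat\psi'$ to force $e=1$, i.e.\ $\hat\la'$ is a genuine extension of $\la'$ (and hence of $\la'$ itself) to $\hat N_{\la'}$. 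This Clifford-theoretic descent from $\Irr_{\ell'}(N)$ to $\subL$ is the key idea absent from your proposal.
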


\begin{proof}
    By Lemma \ref{lem:ext map equiv},  there exists an $\hat{N}$-equivariant extension map $\Lambda$ with respect to $L \lhd N$ for an $\tilde{L}$-transversal $\mathbb{T}_0$ of $\Irr(L)$. Let $\la \in \mathbb{T}_0 \cap \Irr_{\ell'}(L)$ be such that $\ell \nmid |W:W(\la)|$. Set $\psi:=\Ind_{N_\la}^{N}(\Lambda(\la))$. Then $\hat{N}_\psi=N \hat{N}_\la$ and so $(\tilde{N}\hat{N})_\psi=\tilde{N}_\psi \hat{N}_\psi$. By Lemma \ref{lem:loc transversal} there exists an $\tilde{N}$-transversal of $\Irr_{\ell'}(N)$ such that every character in this transversal extends to its inertia group in $\hat{N}$.

    In particular, there exists an $\tilde{L}$-conjugate $\la':=\la^{\tilde{l}}$ of $\la$ such that the character $\psi':=\Ind_{N_\la}^{N}(\Lambda(\la))^{\tilde{l}}$ extends to a character $\hat{\psi}'\in \Irr(\hat{N}_{\psi'})$ and $(\tilde{N} \hat{N})_{\psi'}=\tilde{N}_{\psi'} \hat{N}_{\psi'}$. The claim of the lemma therefore follows when $\Z(\Levi)$ is connected as in this case we always have $\la'=\la$.

    Note that we may also assume $\Levi\neq\bG$, as otherwise the claim is part of Lemma \ref{lem:glob transversal}. We can therefore also assume that $G \not \cong \tD_4(q)$ as in this case either $\Levi$ is a torus or $\Levi=\bG$. Now note that $\hat{N}/N$ acts on $\tilde{L}/L \Z(\tilde{L})$ and by Lemma \ref{lem:tildeL/LZ}, the latter is either trivial or of order $2$.  This implies $[\hat{N},\tilde{L}] \subset L \Z(\tilde{L})$. By the properties of $\mathbb{T}_0$, this gives $(\tilde{L} \hat{N})_{\la'}=\tilde{L}_{\la'} \hat{N}_{\la'}$, since $\la'$ is $\wt L$-conjugate to $\la\in\mathbb{T}_0$. 

    Observe that for any $\hat{\lambda}' \in \Irr(\hat{N}_{\la'} \mid \Lambda(\la)^{\tilde{l}})$, there is some $e\geq 1$ such that $\Res^{\hat{N}_{\lambda'}}_{N_{\lambda'}}(\hat{\lambda}')=e \Lambda(\la)^{\tilde{l}}$. (Indeed, recall from above that $(\tilde{N}\hat{N})_\psi=\tilde{N}_\psi \hat{N}_\psi$ and $(\tilde{N}\hat{N})_{\psi'}=\tilde{N}_{\psi'} \hat{N}_{\psi'}$. Since $\psi$ and $\psi'$ are $\wt L$-conjugate, it follows that  $\hat{N}_\psi=\hat{N}_{\psi'}$. Now, by Clifford correspondence, $\Lambda(\la)^{\tilde{l}}$ is $\hat{N}_{\la'}$-stable if and only if $\psi'$ is $\hat{N}_{\la'}$-stable. However, $\hat{N}_{{\psi}'}=\hat{N}_\psi=N \hat{N}_{\lambda}=N \hat{N}_{\lambda'}$, so that indeed $\psi'$, hence $\Lambda(\la)^{\tilde{l}}$, is $\hat{N}_{\la'}$-stable.)

    Since $\hat{N}_{\psi'}=N \hat{N}_{\la'}$, we have $\hat{N}_{\la'}$ is the stabilizer in $\hat{N}_{\psi'}$ of $\hat\la'$, and it follows that $\hat{\psi}_0':=\Ind_{\hat{N}_{\la'}}^{\hat{N}_{\psi'}}(\hat{\lambda}')$ is an irreducible character of $\hat{N}_{\psi'}$ using Clifford correspondence. In particular, $\Res_{N}^{\hat{N}_{\psi'}}(\hat{\psi}'_0)=\Ind_{N_{\la'}}^{N}(e \Lambda(\la)^{\tilde{l}})=e \psi'$ by Mackey's formula. As $\hat{N}/N$ is abelian (recall that we can assume that $G \not \cong \tD_4(q)$) and $\psi'$ extends to $\hat{N}_{\psi'}$ it follows that $e=1$. In particular, $\hat{\lambda}'$ is an extension of $\Lambda(\la)^{\tilde{l}}$. Then we obtain an $\wt L$-transversal $\mathbb{T}_1$ of $\subL$ such that every character in $\mathbb{T}_1$ extends to its inertia group in $\hat{N}$. As $(\tilde{L} \hat{N})_{\la'}=\tilde{L}_{\la'} \hat{N}_{\la'}$ the so-obtained transversal is $\hat{N}$-stable.
 (Indeed, note that the set of characters satisfying the latter condition is $\hat N$-stable, and that no two distinct $\wt L$-conjugates with this property lie in the same $\hat N$-orbit.)

    Finally, note that the transversal $\mathbb{T}_1$ is automatically $\mathcal{H}_0$-stable, since $\tilde{L}/L \Z(\tilde{L})$ is a $2$-group by Lemma \ref{lem:tildeL/LZ}, and $\mathcal{H}_0$ is an $\ell$-group for an odd prime $\ell$ whose action commutes with that of $\wt{L}/L\Z(\wt{L})$ (see the proof of Lemma \ref{lem:glob transversal}).
\end{proof}

\subsection{The regular case}

Our next goal is to complete the proof of the inductive Isaacs--Navarro condition in the case that $d$ is a regular number for $\G$, in which case $L$ is abelian. We begin with the following, for which we work with $\Lin(L)$, without necessarily assuming that $L$ is abelian.

\begin{lemma}\label{lem:extlinear}
There exists an $(\hat{N} \times \mathcal{H}_0)$-equivariant extension map for the set 
$\Lin(L)\cap\subL$ with respect to $L\lhd \hat{N}$.
\end{lemma}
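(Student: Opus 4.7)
The plan is to construct, for each $\lambda \in \Lin(L)\cap\subL$, an extension $\hat\lambda \in \Irr(\hat N_\lambda)$ of $\lambda$ with the property that $(\hat N \times \mathcal{H}_0)_{\hat\lambda} = (\hat N \times \mathcal{H}_0)_\lambda$. Setting $\Lambda(\lambda):=\hat\lambda$ then gives the desired equivariance for free: for any $g\in\hat N$ and $\sigma\in\mathcal{H}_0$, both $\hat\lambda^g$ (resp. $\hat\lambda^\sigma$) and $\Lambda(\lambda^g)$ (resp. $\Lambda(\lambda^\sigma)$) are extensions of $\lambda^g$ (resp. $\lambda^\sigma$) with the analogous stabilizer-coincidence property, forcing them to coincide.

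To construct $\hat\lambda$, I would apply Lemma \ref{lem:extnsinitial_var} with $X:=L$, $Y:=N_\lambda$, $\hat Y:=\hat N_\lambda$ and $\psi:=\lambda$. The existence of an extension of $\lambda$ to $Y_\psi=N_\lambda$ is provided by Lemma \ref{lem:ext map equiv} (together with $\lambda\in\subL$, which ensures $\lambda$ is not forced outside the transversal). Hypothesis (ii) is verified using Lemma \ref{lem:locextstructure}: take $\hat K:=\hat V \cap \hat N_\lambda$. Since $\lambda$ is a linear character of $L$, inner automorphisms of $L$ fix $\lambda$, so $L\leq N_\lambda\leq \hat N_\lambda$; combined with $\hat N = L\hat V$, this gives $\hat N_\lambda = L\hat K$, and $\hat K \cap L \subseteq \hat V \cap L \leq \Z(L)$ is a $2$-group, hence an $\ell'$-group since $\ell$ is odd. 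The lemma then produces an extension $\chi\in\Irr(N_\lambda)$ with $(\hat N_\lambda\times\mathcal{H}_0)_\chi = (\hat N_\lambda\times\mathcal{H}_0)_\lambda$; in particular $\chi$ is $\hat N_\lambda$-invariant.

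It remains to lift $\chi$ from $N_\lambda$ to $\hat N_\lambda$ while preserving the stabilizer coincidence. For this I would mimic the determinantal argument in the proof of Lemma \ref{lem:extnsinitial_var}: decompose the relevant quotient $\hat N_\lambda/N_\lambda$ (which by Lemma \ref{lem:locextstructure} is controlled by $\hat E/(\hat E\cap N)$, with $\hat E \cap N$ an $\ell'$-group) into its $\ell$- and $\ell'$-parts. On the $\ell'$-part, a canonical extension with prime-to-$\ell$ determinantal order exists by \cite[Lem.~8.16]{Isa} (and is automatically $\mathcal{H}_0$-invariant, as $\mathcal{H}_0$ fixes $\ell'$-roots of unity), while on the $\ell$-part the argument from \cite[Lem.~6.24]{Isa} combined with the $\ell'$-intersection $\hat K\cap L$ yields a canonical extension whose associated scalar with respect to each element of $(\hat N\times\mathcal{H}_0)_\lambda$ is trivial. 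Lemma \ref{lem:gluing} then combines these into the sought extension $\hat\lambda\in\Irr(\hat N_\lambda)$.

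The main obstacle will be ensuring the Sylow-normality hypothesis of Lemma \ref{lem:extnsinitial_var} is actually met by $N_\lambda/L = W(\lambda)$: the condition $\lambda\in\subL$ gives only that $W(\lambda)$ contains a Sylow $\ell$-subgroup of $W$, not that it is normal. The detailed description of $\hat N$ from Lemma \ref{lem:locextstructure} (and of Sylow subgroups from Corollary \ref{cor:structure sylow}), together with the fact that in our setup $L$ is abelian in the regular case and the Sylow $\ell$-subgroups of $L$ are central in general, should be what pushes this through; otherwise one would first restrict to a suitable $\ell$-local intermediate subgroup, apply Lemma \ref{lem:extnsinitial_var} there, and glue.
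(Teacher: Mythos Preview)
Your proposal has two genuine gaps.

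First, a minor one: your opening equivariance argument is wrong. Two extensions of $\lambda$ to $\hat N_\lambda$ that both have stabilizer $(\hat N\times\mathcal H_0)_\lambda$ need not coincide (for $\lambda=1$ any $(\hat N\times\mathcal H_0)$-invariant linear character of $\hat N_\lambda/L$ works). The standard fix, which the paper uses, is to construct $\Lambda(\lambda)$ only for $\lambda$ in an $(\hat N\times\mathcal H_0)$-transversal and then \emph{define} $\Lambda(\lambda^\sigma):=\Lambda(\lambda)^\sigma$; the stabilizer-coincidence is exactly what makes this well-defined.

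Second, and more seriously, your application of Lemma~\ref{lem:extnsinitial_var} fails on two hypotheses. You correctly flag that $Y/X=W(\lambda)$ need not have a normal Sylow $\ell$-subgroup, but you miss that $\hat Y/Y=\hat N_\lambda/N_\lambda$ is required to be an $\ell$-group: this quotient embeds into $\hat N/N$, a section of the field-and-graph automorphism group $E$, and is almost never an $\ell$-group. Your step~5 is supposed to repair this, but the sketch there does not work either: the canonical extension of \cite[Lem.~8.16]{Isa} along the $\ell'$-part of $\hat N_\lambda/N_\lambda$ would require $o(\chi)$ to be an $\ell$-power, whereas $\chi$ extends a linear character of $L$ of arbitrary order.

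The paper bypasses Lemma~\ref{lem:extnsinitial_var} entirely and instead exploits that $\lambda$ is \emph{linear}. Writing $\hat N=L\hat V$ with $H:=\hat V\cap L$ an abelian $2$-group (Lemma~\ref{lem:locextstructure}), the restriction $\theta:=\lambda|_H$ has $2$-power order. One knows $\lambda$ extends to $\hat N_\lambda$ by Lemma~\ref{lem:ext map prop}, hence $\theta$ extends to a linear character of $\hat V_\lambda$. Among all extensions of $\theta$ to $\hat V_\lambda/[\hat V_\lambda,\hat V_\lambda]$, consider those trivial on the Sylow $r$-subgroups for $r\neq 2$: each takes only $2$-power roots of unity as values, so is automatically $\mathcal H_0$-fixed, and there are $2$-power many of them. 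Since $(\hat V\times\mathcal H_0)_\lambda/\hat V_\lambda$ is an $\ell$-group, coprimality gives a $(\hat V\times\mathcal H_0)_\lambda$-stable choice $\hat\theta'$. Gluing $\lambda$ and $\hat\theta'$ via Lemma~\ref{lem:gluing} yields the desired $(\hat N\times\mathcal H_0)_\lambda$-stable extension $\Lambda(\lambda)\in\Irr(\hat N_\lambda)$. This avoids any hypothesis on Sylow normality in $W(\lambda)$ or on $\hat N_\lambda/N_\lambda$.
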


\begin{proof}
Recall from Lemma \ref{lem:locextstructure} that there exists a subgroup $\hat{V} \leq \hat{N}$ such that $\hat{N}=L \hat{V}$ and $H:=\hat{V} \cap L$ is an abelian $2$-group.

Let $\la \in \Irr(L)$ be a linear character satisfying $\ell\nmid[W:W(\la)]$ and let $\theta$ be its (irreducible) restriction to the $2$-group $H$. It suffices to find an extension $\Lambda(\la)$ of $\la$ to $\hat{N}_\la$ such that $(\hat{N} \times \mathcal{H}_{0})_{\Lambda(\la)}=(\hat{N} \times\mathcal{H}_{0})_{\la}$. {(Indeed, then we obtain a well-defined $(\hat N\times \galh_0)$-equivariant extension map $\Lambda'$ by defining $\Lambda'(\la):=\Lambda(\la)$ for $\la$ in some $(\hat N\times\galh_0)$-transversal, and $\Lambda'(\la^\sigma):=\Lambda(\la)^\sigma$ for any $\sigma\in \hat N\times \galh_0$.)}

By Lemma \ref{lem:ext map prop}, the character $\la$ extends to a character $\hat\la$ of $\hat{N}_\la$. 
We denote by $\hat{\theta}$ the restriction of $\hat\la$ to $\hat{V}_\la$. In particular, we can consider $\hat{\theta}$ as a character of $\hat{V}_\la/[\hat{V}_\la,\hat{V}_\la]$, which extends the character $\theta$ of $H/([\hat{V}_\la,\hat{V}_\la] \cap H)$. As $H$ is a $2$-group, we can also take an extension  of $\theta$ to $\hat{V}_\la/[\hat{V}_\la,\hat{V}_\la]$ that is trivial on the Sylow $r$-subgroups of $\hat{V}_\la/[\hat{V}_\la,\hat{V}_\la]$ for primes $r\neq 2$. Note that such an extension is $\galh_0$-invariant, as $\galh_0$ acts trivially on $\ell'$-roots of unity, hence on $2$-power roots of unity since $\ell$ is odd. Let $\mathfrak{X}$ denote the set of all such extensions of $\theta$ (that is, $\mathfrak{X}$ is the set of all extensions of $\theta$ to $\hat{V}_\la/[\hat{V}_\la,\hat{V}_\la]$ that are trivial on the Sylow $r$-subgroups for $r\neq 2$). By Gallagher's theorem, the size of $\mathfrak{X}$ is a power of $2$. Note also that $(\hat{V} \times \mathcal{H}_0)_\la/\hat{V}_{\la}$ acts on $\mathfrak{X}$. Arguing as in the second paragraph of Lemma \ref{lem:glob transversal}, we see that $(\hat{V} \times \mathcal{H}_0)_\la/\hat{V}_{\la}$ is an $\ell$-group, and therefore there is some such extension $\hat{\theta}'\in \mathfrak{X}$ that is $(\hat{V} \times \mathcal{H}_0)_\la$-invariant.

Hence, the unique extension $\Lambda(\la) \in \Irr(\hat{N}_\la \mid \la)$ which restricts to $\hat{\theta}'$ on $\hat{V}_\la$ (see Lemma \ref{lem:gluing}) is again $(\hat{N} \times \mathcal{H}_0)_\la$-stable.
\end{proof}

We now obtain the inductive Isaacs--Navarro condition in the case that $d$ is regular.

\begin{proposition}\label{prop:regular}
    Assume that $d=d_\ell(q)$ is regular for the group $(\G,F)$ and that $\bG^F$ is quasisimple. Then the inductive Isaacs--Navarro condition holds for $\G^F/\Z(\bG^F)$ and the prime $\ell$.
\end{proposition}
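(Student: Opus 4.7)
The plan is to verify the three hypotheses of Theorem~\ref{thm:RSST}. By Corollary~\ref{cor:typeA} we may assume $\bG$ is not of type $\tA$. The key structural observation is that regularity of $d$ forces $\bL = \C_{\bG}(\bS)$ to be a torus; since $\wt\bL = \C_{\wt\bG}(\bS) = \bL\Z(\wt\bG)$ is then also a torus, both $L$ and $\wt L$ are abelian. This abelianness will collapse the general criterion onto the already-established linear-character setting.

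For condition (\ref{extmap}) of Theorem~\ref{thm:RSST}, since $L$ is abelian, $\Lin(L)\cap\subL = \subL$, and Lemma~\ref{lem:extlinear} directly provides an $(\hat N\times\galh_0)$-equivariant extension map for $\subL$ with respect to $L\lhd \hat N$. Restricting each extension to the relevant inertia group in $N$ yields such a map with respect to $L\lhd N$. Because $\wt L$ is abelian, Remark~\ref{rem:lift ext map} permits the application of Lemma~\ref{lem:lift ext map} without first passing to a $\wt L$-transversal, producing the required $(\Irr(\wt N/N)\rtimes\widehat N\galh_0)$-equivariant extension map $\wt\Lambda$ for $\mathfrak{C}$ with respect to $\wt L\lhd \wt N$.

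For conditions (\ref{transversals}) and (\ref{extcond}), regularity of $d$ yields $M = N$ in the notation of Lemma~\ref{lem:loc transversal}, so Lemmas~\ref{lem:glob transversal} and~\ref{lem:loc transversal} supply the $(\galh_0\times\widehat N)$-stable $\wt G$- and $\wt N$-transversals $\mathbb{T}$ and $\mathbb{T}'$, along with extensions of their members to the relevant inertia groups in $GE$ and $\hat N$. Corollary~\ref{cor:extensionpart} then promotes these to extension maps $\Phi_{glo}$ and $\Phi_{loc}$ whose associated scalars are trivial on both sides, so condition (\ref{extcond}) holds automatically. Applying Theorem~\ref{thm:RSST} then delivers the inductive Isaacs--Navarro condition for $\bG^F/\Z(\bG^F)$.

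In short, the main work for this proposition has already been absorbed into the preceding lemmas---particularly Lemma~\ref{lem:extlinear} on linear-character extensions and Corollary~\ref{cor:extensionpart} on trivial associated scalars---so no genuine obstacle arises here; the regular case amounts to recognizing that abelianness of $L$ and $\wt L$ lets one feed these inputs into the criterion. The real difficulty still ahead lies in the non-regular case, where $\bL$ is a proper non-toral Levi and one cannot reduce to linear characters directly.
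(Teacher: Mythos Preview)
Your proof is correct and follows essentially the same route as the paper: verify conditions (\ref{extmap})--(\ref{extcond}) of Theorem~\ref{thm:RSST} by combining Lemma~\ref{lem:extlinear} with Lemma~\ref{lem:lift ext map}/Remark~\ref{rem:lift ext map} for the extension map, and Lemmas~\ref{lem:glob transversal}, \ref{lem:loc transversal} with Corollary~\ref{cor:extensionpart} for the transversals and scalar condition. Your explicit reduction to the non-$\tA$ case via Corollary~\ref{cor:typeA} (needed for Lemma~\ref{lem:loc transversal}) and the observation that $M=N$ in the regular case are helpful clarifications that the paper leaves implicit.
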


\begin{proof}
We check that all conditions in Theorem \ref{thm:RSST} are satisfied. By Lemma \ref{lem:extlinear} and Lemma \ref{lem:lift ext map} (see also Remark \ref{rem:lift ext map}), condition (\ref{extmap}) holds.
    Then conditions (\ref{transversals}) and (\ref{extcond}) of Theorem \ref{thm:RSST} are also satisfied by Lemma \ref{lem:loc transversal}, Lemma \ref{lem:glob transversal}, and Corollary \ref{cor:extensionpart}.  Hence the statement follows by Theorem \ref{thm:RSST}.
\end{proof}

\section{The non-regular case}\label{sec:nonreg}

 In this section, we complete the proof of our main results. We continue to keep the situation of Notation \ref{not:lie type}.
 
\subsection{An extension map in the non-regular case}

We now move to the case that $d=d_\ell(q)$ is non-regular. That is, $L$ is not a torus. We make this assumption throughout this subsection. Note that with this assumption, we have $\ell>3$ and 
 $D\in\Syl_\ell(G)$ can be chosen such that $\N_G(D)\leq N$, thanks to \cite[Thm.~5.14]{Ma07}. 
 
 In this case, the group $L\N_G(D)$ will serve the role of our intermediate group $M$ as in \cite[Thm.~3.4, Cor.~3.5]{RSST}, in place of $N$. 
We will write $\Irr(L)^D$ for the set of $D$-stable characters in $\Irr(L)$, and $\Irr_{\ell'}(L)^D$ for the intersection $\Irr(L)^D\cap \Irr_{\ell'}(L)$.

\begin{remark}\label{rem:diagonal defect}
    Note that if $\la \in \Irr(L)$ is $D$-stable then the same is true for all of its $\tilde{L}$-conjugates, since $[\tilde{L},D] \subset L$.
\end{remark}

Recall from Lemma \ref{lem:ext map prop} that there exists an $\tilde{L}$-transversal $\mathbb{T}_1$ of
$\subL$
such that every $\la$ in $\mathbb{T}_1$ has an extension to $\hat{N}_\la$. Further, note that $\Irr_{\ell'}(L)^D\subseteq \subL$ since $D\leq N_\la$ for any $D$-invariant $\la\in\Irr_{\ell'}(L)$.
By Remark \ref{rem:diagonal defect}, the intersection 
\[\mathbb{T}_D:=\mathbb{T}_1 \cap \Irr_{\ell'}(L)^D\]
is therefore an $\tilde{L}$-transversal of $\Irr_{\ell'}(L)^D$. Since $\mathbb{T}_1$ is $(\mathcal{H}_0 \times \hat{N})$-stable, we see $\mathbb{T}_D$ is further $(\N_{\hat{G}}(D)\times\mathcal{H}_{0})$-stable, where we recall $\hat{G}:=GE$.

Note also that as $\ell> 3$ it follows that $E$ has a normal $\ell$-complement $E_{\ell'} \lhd E$, and we will consider the group $L\N_{GE_{\ell'}}(D)\lhd L\N_{\hat{G}}(D)$.

\begin{proposition}\label{prop:equiv ext map NEW}
    Keep the situation above. Then there exists an $(\N_{\hat{G}}(D)\times\galh_0)$-equivariant extension map $\hat\La$ with respect to $L \lhd L \N_{G E_{\ell'}}(D)$ for the $(\N_{\hat{G}}(D) \times \mathcal{H}_0)$-stable $\tilde{L}$-transversal $\mathbb{T}_D$ of $\Irr_{\ell'}(L)^D$. 
\end{proposition}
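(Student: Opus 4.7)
The plan is to apply Lemma~\ref{lem:extnsinitial_var} with the triple $X := L$, $Y := L\N_{GE_{\ell'}}(D)$, $\hat{Y} := L\N_{\hat{G}}(D)$, using the subgroup $\hat{V}$ from Lemma~\ref{lem:locextstructure}. All three groups sit inside $\hat{N}$ with $L \lhd \hat{N}$, so $X \lhd Y \lhd \hat{Y}$ and $X \lhd \hat{Y}$. The quotient $Y/X$ has normal Sylow $\ell$-subgroup $LD/L$, since $D \cdot \N_L(D) \lhd \N_{GE_{\ell'}}(D)$ and $\N_{GE_{\ell'}}(D)/D$ is an $\ell'$-group (as $D \in \Syl_\ell(GE_{\ell'})$). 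By the second isomorphism theorem, $\hat{Y}/Y \cong \N_{\hat{G}}(D)/\N_{GE_{\ell'}}(D)$ is a quotient of $\hat{E}/(\hat{E} \cap E_{\ell'})$, and hence an $\ell$-group.

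For the decisive condition (ii) of Lemma~\ref{lem:extnsinitial_var}, I would take $\hat{K} := \hat{V} \cap \hat{Y}$. Since $L \leq \hat{Y} \leq L\hat{V} = \hat{N}$ by Lemma~\ref{lem:locextstructure}, Dedekind's modular identity yields $\hat{Y} = L\hat{K}$, and $\hat{K} \cap L \leq \hat{V} \cap L$ is a $2$-group, which is an $\ell'$-group since $\ell$ is odd. Now given $\la \in \mathbb{T}_D$, Lemma~\ref{lem:ext map prop} furnishes an extension of $\la$ to $\hat{N}_\la$, and restricting yields an extension to $Y_\la \leq \hat{N}_\la$. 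Applying Lemma~\ref{lem:extnsinitial_var} then produces an extension $\hat\Lambda(\la) \in \Irr(Y_\la)$ with $(\hat{Y} \times \galh_0)_{\hat\Lambda(\la)} = (\hat{Y} \times \galh_0)_\la$, so in particular $\hat\Lambda(\la)$ is invariant under the smaller stabilizer $(\N_{\hat{G}}(D) \times \galh_0)_\la$. Since $\mathbb{T}_D$ is $(\N_{\hat{G}}(D) \times \galh_0)$-stable, this stabilizer invariance allows me to propagate $\hat\Lambda$ across each orbit by setting $\hat\Lambda(\la^\sigma) := \hat\Lambda(\la)^\sigma$, with well-definedness guaranteed by the invariance of $\hat\Lambda(\la)$.

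The main obstacle is verifying condition (ii). The naive choice $\hat{K} := \N_{\hat{G}}(D)_\la$ fails because $\Z(L)_\ell \leq \N_L(D) = \N_{\hat{G}}(D) \cap L$ is a nontrivial $\ell$-group in the non-regular setting. The subgroup $\hat{V}$ provided by Lemma~\ref{lem:locextstructure} was constructed precisely so that $\hat{V} \cap L$ lies in $\Z(L)$ and is a $2$-group, which is exactly the $\ell'$-intersection condition needed to apply Lemma~\ref{lem:extnsinitial_var} for odd $\ell$.
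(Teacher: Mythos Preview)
Your proposal is correct and follows essentially the same route as the paper: both apply Lemma~\ref{lem:extnsinitial_var}(ii) with $X=L$, $Y=L\N_{GE_{\ell'}}(D)$, $\hat Y=L\N_{\hat G}(D)$, and $\hat K=\hat V\cap\hat Y$, using that $\hat V\cap L$ is a $2$-group from Lemma~\ref{lem:locextstructure}. The one place where the paper is slightly more explicit is in invoking Corollary~\ref{cor:structure sylow} to obtain $\N_{\hat G}(D)=\N_G(D)\hat E\leq\hat N$, which is what justifies your assertion that ``all three groups sit inside $\hat N$''; you should cite that corollary (valid here since $d$ is non-regular) rather than leave the containment $\hat Y\leq\hat N$ implicit.
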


\begin{proof}
Note that it again suffices to show that each $\la\in\mathbb{T}_D$ has an extension to $L\N_{GE_{\ell'}}(D)$ with the same stabilizer in $\N_{\hat G}(D)\times \galh_0$.  
Recall that, writing $\hat{M}:=L\N_{\hat{G}}(D)$ and $M=L\N_G(D)$, we have $\hat{M}=M \hat{E}\leq \hat N$ by Corollary \ref{cor:structure sylow}. Then we have $\hat{M}=L(\hat{V}\cap \hat{M})$, where $\hat{V}$ is as in Lemma \ref{lem:locextstructure}, and it follows that the assumptions of Lemma \ref{lem:extnsinitial_var}(ii) are satisfied for $X:=L$, $Y:=L \N_{GE_{\ell'}}(D)$, and $\hat{Y}:=L\N_{\hat{G}}(D)$. Then we complete the claim by applying Lemma \ref{lem:extnsinitial_var} to the transversal $\mathbb{T}_D$.
\end{proof}

\begin{corollary}\label{cor:equiv ext map}
    There exists an $(\N_{\hat{G}}(D)\times\galh_0)$-equivariant extension map $\La$ with respect to $L \lhd L \N_{G}(D)$ for an $(\N_{\hat{G}}(D)\times\galh_0)$-stable $\tilde{L}$-transversal $\mathbb{T}_D$ of $\Irr_{\ell'}(L)^D$. 
\end{corollary}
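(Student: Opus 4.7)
The plan is to obtain $\La$ directly by restricting the extension map $\hat\La$ provided by Proposition \ref{prop:equiv ext map NEW} to the smaller subgroup $L\N_G(D) \leq L\N_{GE_{\ell'}}(D)$. Concretely, for each $\la \in \mathbb{T}_D$ I would define
\[
\La(\la) := \Res^{(L\N_{GE_{\ell'}}(D))_\la}_{(L\N_G(D))_\la}(\hat\La(\la)),
\]
which makes sense since $(L\N_G(D))_\la = (L\N_G(D)) \cap (L\N_{GE_{\ell'}}(D))_\la$.

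The first step is to verify that $\La(\la)$ is an irreducible extension of $\la$ to $(L\N_G(D))_\la$. By construction, $\Res_L^{(L\N_G(D))_\la}(\La(\la)) = \Res^{(L\N_{GE_{\ell'}}(D))_\la}_L(\hat\La(\la)) = \la$, which already guarantees the extension property. Irreducibility follows from a standard Clifford-theoretic argument: if $\La(\la) = \sum_i m_i \chi_i$ is a decomposition into irreducible constituents of $(L\N_G(D))_\la$, then each $\Res^{(L\N_G(D))_\la}_L(\chi_i)$ is a nonzero character, and the equality $\sum_i m_i \Res^{(L\N_G(D))_\la}_L(\chi_i) = \la$ with $\la$ irreducible forces exactly one term with $m_i = 1$.

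Next, I would verify the equivariance under $\N_{\hat{G}}(D) \times \galh_0$. The crucial ingredient is that this group normalizes both the source $L\N_{GE_{\ell'}}(D)$ and the target $L\N_G(D)$ of the restriction. Indeed, by Corollary \ref{cor:structure sylow}, $\N_{\hat{G}}(D) = \N_G(D)\hat{E}$, and by Lemma \ref{lem:locextstructure} we have $\hat{E} \leq \hat{N} = \N_{GE}(\Levi)$, which normalizes $L = \Levi^F$; on the other hand, $\N_G(D) \leq N \leq \hat{N}$ likewise normalizes $L$, and $\N_{\hat{G}}(D)$ trivially normalizes $\N_G(D)$. Since the $\galh_0$-action commutes with restriction, and since conjugation by elements of $\N_{\hat{G}}(D)$ commutes with restriction whenever both subgroups are stabilized, the equivariance of $\hat{\La}$ together with the $(\N_{\hat{G}}(D) \times \galh_0)$-stability of $\mathbb{T}_D$ yields $\La(\la^\sigma) = \La(\la)^\sigma$ for every $\sigma \in \N_{\hat{G}}(D) \times \galh_0$ and every $\la \in \mathbb{T}_D$.

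I do not anticipate a serious obstacle: the corollary is essentially a formal consequence of Proposition \ref{prop:equiv ext map NEW}, and the only bookkeeping required is the verification that $L\N_G(D)$ is stable under the action of $\N_{\hat{G}}(D) \times \galh_0$, which follows cleanly from the structural information about $\hat{E}$ established in Lemma \ref{lem:locextstructure} and Corollary \ref{cor:structure sylow}.
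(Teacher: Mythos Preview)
Your proposal is correct and follows exactly the paper's approach: the paper also defines $\La(\psi)$ as the restriction $\Res_{L \N_G(D)_\psi}^{L \N_{G E_{\ell'}}(D)_\psi}(\hat{\Lambda}(\psi))$ and observes that it inherits $(\N_{\hat{G}}(D)\times\galh_0)_\psi$-invariance from $\hat\La(\psi)$. Your write-up simply spells out the irreducibility and normalization checks that the paper leaves implicit.
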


\begin{proof}

Let $\mathbb{T}_D$ be the transversal from Proposition \ref{prop:equiv ext map NEW}, so that every character $\psi$ in $\mathbb{T}_D$ has an $(\N_{\hat{G}}(D)\times\galh_0)_\psi$-invariant extension $\hat{\Lambda}(\psi) \in \Irr(L \N_{G E_{\ell'}}(D)_\psi \mid \psi)$. Hence, $\Res_{L \N_G(D)_\psi}^{L \N_{G E_{\ell'}}(D)_\psi}(\hat{\Lambda}(\psi))$ is $(\N_{\hat{G}}(D)\times\galh_0)_\psi$-invariant.
\end{proof}

\subsection{The inductive conditions in the non-regular case}

The proof of Theorem \ref{thm:main} below will use induction to deal with the case that $d$ is not regular. The following will allow us to achieve the inductive step.

\begin{theorem}\label{thm:induc}
Keep the situation of Notation \ref{not:lie type} and assume that $d$ is not regular for the group $(\G,F)$ and that $\bG$ is not of type $\tA$.
Suppose that the inductive Isaacs--Navarro condition holds for the simple groups involved in all groups whose order is smaller than the order of $G/\Z(G)$.  
Let $D\in\Syl_\ell(G)$.
    Then there exists an $(\Irr(\wt{G}/G)\rtimes(\Aut(\tilde{G})_D \times \mathcal{H}_0))$-equivariant bijection $\tilde{\Omega}:\Irr_{\ell'}(\tilde{G}) \to \Irr_{\ell'}(\tilde{L}\N_{\tilde{G}}(D))$.
\end{theorem}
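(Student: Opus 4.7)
The plan is to construct $\wt\Omega$ as a composition of two equivariant bijections, with the intermediate group being $\wt N = \N_{\wt G}(\bS)$. Since $\bG$ is not of type $\tA$ and $\ell$ is odd, both $\wt G/G$ and $\wt N/N$ are $\ell'$-groups, so $\Irr_{\ell'}(\wt G)$ may be identified with $\Irr(\wt G \mid \Irr_{\ell'}(G))$ and similarly for $\wt N$; moreover $\N_{\wt G}(D) \leq \wt N$ since $\bS$ is canonically attached to $D$, so $\wt L \N_{\wt G}(D) \leq \wt N$.

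\textbf{First step.} I would verify condition (\ref{extmap}) of Theorem \ref{thm:RSST} by combining Lemma \ref{lem:ext map prop} with Lemma \ref{lem:lift ext map} (cf.\ Remark \ref{rem:lift ext map}). Feeding the resulting extension map into \cite[Thm.~B]{RSST} produces an $(\Irr(\wt G/G)\rtimes((GE)_\bS \times \galh_0))$-equivariant bijection
\[
\wt\Omega_1 : \Irr_{\ell'}(\wt G) \longrightarrow \Irr_{\ell'}(\wt N).
\]

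\textbf{Second step.} Here the inductive hypothesis enters. By Clifford theory, every $\psi \in \Irr_{\ell'}(\wt N)$ lies above a character $\wt\la \in \Irr_{\ell'}(\wt L)$, and after $\wt L$-conjugation I may assume $\wt\la$ restricts to a character of $L$ in $\mathbb{T}_D$ (in particular, $D$-stable). Non-regularity of $d$ forces $\bL$ to be a proper Levi, so $[\bL,\bL]^F$ is a central product of quasisimple groups whose simple quotients each have order strictly less than $|G/\Z(G)|$; by hypothesis, each such simple group satisfies the inductive Isaacs--Navarro condition. Translating these conditions through the central product structure---following the Clifford-theoretic gluing that underlies the reduction arguments of \cite[Thm.~A]{NSV20}---produces an $(\N_{\wt G}(D)\times\galh_0)$-equivariant McKay-type bijection $\Irr_{\ell'}(\wt L) \to \Irr_{\ell'}(\N_{\wt L}(D_L))$, where $D_L$ is a Sylow $\ell$-subgroup of $\wt L$ chosen compatibly with $D$ via Corollary \ref{cor:structure sylow}. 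Combining this with the extension map from Corollary \ref{cor:equiv ext map} (lifted from $L \lhd L\N_G(D)$ to $\wt L \lhd \wt L \N_{\wt G}(D)$ using the argument of Lemma \ref{lem:lift ext map}) promotes it to the required $(\Irr(\wt G/G) \rtimes (\Aut(\wt G)_D \times \galh_0))$-equivariant bijection
\[
\wt\Omega_2 : \Irr_{\ell'}(\wt N) \longrightarrow \Irr_{\ell'}(\wt L\N_{\wt G}(D)),
\]
so that $\wt\Omega := \wt\Omega_2 \circ \wt\Omega_1$ is the desired bijection.

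The hard part will be controlling the equivariance in the second step. The inductive hypothesis supplies a bijection for each simple composition factor of $[\bL,\bL]^F$ equivariant under its own automorphism group, but elements of $\Aut(\wt G)_D$ may permute these factors or act on them via a combination of diagonal, field, and graph automorphisms; showing that the glued bijection remains equivariant under this ambient action while simultaneously compatible with the extension maps used in lifting from $L$ to $\wt L$ requires careful bookkeeping of the twisting data coming from the embedding of the simple factors into $\bG$. The fact that $\wt L / L\Z(\wt L)$ is a $2$-group whose action commutes with that of $\galh_0$ (Lemma \ref{lem:tildeL/LZ}) will be essential, enabling coprime-action arguments analogous to those already used in Lemmas \ref{lem:ext map prop} and \ref{lem:glob transversal}.
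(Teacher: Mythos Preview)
Your approach contains two genuine gaps, one in each step.

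\textbf{Step 1.} Lemma \ref{lem:ext map prop} only asserts that each $\la\in\mathbb{T}_1$ \emph{admits} an extension to $\hat N_\la$; it does not produce an $(\hat N\times\galh_0)$-\emph{equivariant} extension map, which is the hypothesis of Lemma \ref{lem:lift ext map}. To upgrade existence to equivariance you would need each extension to be $(\hat N\times\galh_0)_\la$-invariant, and the available tool for this (Lemma \ref{lem:extnsinitial_var}) fails because $N_\la/L$, a subgroup of the relative Weyl group, need not have a normal Sylow $\ell$-subgroup. This obstruction is exactly why the paper replaces $N$ by $M=L\N_G(D)$ in the non-regular case: $M_\la/L$ \emph{does} have a normal Sylow $\ell$-subgroup, so Proposition \ref{prop:equiv ext map NEW} and Corollary \ref{cor:equiv ext map} apply there, whereas the analogous statement for $L\lhd N$ is not established.

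\textbf{Step 2.} You apply the inductive hypothesis to the wrong group. Since $d$ is non-regular, the Sylow $\ell$-subgroup of $L$ is central in $L$ (see the proof of Corollary \ref{cor:structure sylow}), so $\N_{\wt L}(D_L)=\wt L$ and the Isaacs--Navarro bijection for $\wt L$, or for the simple factors of $[\Levi,\Levi]^F$, is the identity; it contributes nothing toward $\wt\Omega_2$. Passing from $\wt N$ to $\wt L\N_{\wt G}(D)$ via Clifford theory over $\wt L$ instead requires a McKay-type bijection for the \emph{relative Weyl group} $W_{\wt\la}=\wt N_{\wt\la}/\wt L$, whose Sylow $\ell$-normalizer is $\N_{\wt G}(D)_{\wt\la}\wt L/\wt L$. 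The paper applies the inductive hypothesis precisely there: because $\Irr(W_{\wt\la})$ is already pointwise $\galh_0$-fixed by \cite[Sec.~4.B]{RSST}, the Isaacs--Navarro conjecture for $W_{\wt\la}$ forces the local characters to be $\galh_0$-fixed too, so the bijection $f_{\wt\la}$ coming from the inductive McKay condition \cite[Thm.~B]{CS25} is automatically $\galh_0$-equivariant. The paper then defines $\wt\Omega$ directly from Malle's parametrization of $\Irr_{\ell'}(\wt G)$ by pairs $(\psi,\wt\eta)$ with $\psi\in\Irr_{\ell'}(\wt L)$ and $\wt\eta\in\Irr_{\ell'}(W_\psi)$, bypassing $\wt N$ entirely.
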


\begin{proof}
Recall that we have a regular embedding yielding $G\lhd \wt G$.  
Note that we may assume that $\ell\neq 3$, since otherwise $d\in\{1,2\}$ is regular. In particular, $\ell$ does not divide $|\tilde{G}:G \Z(\tilde{G})|$ so that $\tilde{D}:=D \Z(\tilde{G})_\ell$ is a Sylow $\ell$-subgroup of $\tilde{G}$. Let $\tilde{\la} \in \Irr_{\ell'}(\tilde{L})^D$.
As established in  
\cite[Sec.~4.B]{RSST}, the characters of the relative Weyl group $W_{\tilde{\lambda}}:=\N_{\tilde{G}}(\tilde{\Levi},\tilde{\lambda})/\tilde{L}$ are all $\mathcal{H}_\ell$-invariant. 
By our assumption and applying \cite[Thm.~A]{NSV20}, the Isaacs--Navarro Galois conjecture holds  
for the group $W_{\tilde{\la}}$. This in particular implies that we have an $\mathcal{H}_0$-equivariant bijection $\Irr_{\ell'}(W_{\tilde{\la}}) \to \Irr_{\ell'}(\N_{\tilde{G}}(\tilde{D})_{\tilde{\la}} \tilde{L}/\tilde{L})$. (Note that the image $\tilde{D}\tilde{L}/\tilde{L}$ is a Sylow $\ell$-subgroup of $\N_{\tilde{G}}(\tilde{\Levi})/\tilde{L}$ and that $\N_{\tilde{G}}(\tilde{D})\tilde{L}/\tilde{L}$ is its normalizer in $\N_{\tilde{G}}(\tilde{\Levi})/\tilde{L}$.) 
From this, we see that every character in $\Irr_{\ell'}(\N_G(\tilde{D})_{\tilde{\la}} \tilde{L}/\tilde{L})$ is $\mathcal{H}_0$-invariant. 

On the other hand, the inductive McKay condition (which holds for all finite groups thanks to \cite[Thm.~B]{CS25}) yields a collection $f_{\tilde{\la}}:\Irr_{\ell'}(\N_{\tilde{G}}(\tilde{\Levi},\tilde{\la})/\tilde{L}) \to \Irr_{\ell'}(\N_{\tilde{G}}(\tilde{D})_{\tilde{\la}} \tilde{L}/\tilde{L})$ of $(\Aut(\N_{\tilde{G}}(\Levi))_{\tilde{D}} \times \mathcal{H}_0)$-equivariant bijections.
(For this, first fix an $(\Aut(\N_{\tilde{G}}(\Levi))_D \times \mathcal{H}_0)$-transversal of $\Irr_{\ell'}(\tilde{L})^D$. Then for every character $\tilde{\lambda}$ in this transversal we get by the inductive McKay condition an $\Aut(\N_G(\Levi,\tilde{\la}))_D$-equivariant map. Then extend the definition of $f_{\tilde{\lambda}}$ in an $( \Aut(\N_{\tilde{G}}(\Levi))_D \times \mathcal{H}_0)$-equivariant way, noting that $\mathcal{H}_0$ acts trivially on both sides of $f_{\tilde{\lambda}}$).

Let $\Lambda$ be the $(\mathcal{H}_0 \times \N_{\hat{G}}(D))$-equivariant extension map with respect to $L \lhd L \N_{G}(D)$ for the $\wt{L}$-transversal $\mathbb{T}_D \subset \Irr_{\ell'}(L)^D$ from Corollary \ref{cor:equiv ext map}. By \cite[Prop.~2.3]{CS25}, the map $\Lambda$ yields an $(\Irr(\N_{\tilde{G}}(D)/\N_{G}(D)) \rtimes (\N_{\hat{G}}(D) \times \mathcal{H}_0))$-equivariant extension map $\tilde{\La}$ for $\tilde{L} \lhd \tilde{L} \N_{\tilde{G}}(D)$ for the set $\Irr_{\ell'}(\tilde{L})^D$. 

By Gallagher's theorem, for $\tilde{\lambda} \in \Irr_{\ell'}(\tilde{L})^D$, there is a bijection
$$
\Irr_{\ell'}(\N_{\tilde{G}}(D)_{\tilde{\lambda}} \tilde{L}/\tilde{L}) \to \Irr_{\ell'}(\N_{\tilde{G}}(D) \tilde{L} \mid {\tilde{\lambda}})$$ given by
$$\eta \mapsto \Ind_{\N_{\tilde{G}}(D)_{\tilde{\lambda}} \wt L}^{\N_{\tilde{G}}(D) \wt L}(\tilde{\Lambda}({\tilde{\lambda}}) \eta).
$$
Let $\tilde{t} \in \tilde{L}^\ast$ be such that $\tilde{\lambda} \in \mathcal{E}(\tilde{L},\tilde{t})$ and denote by $\wt N_0$ the stabilizer in $\wt N$ of $\mathcal{E}(\tilde{L},\tilde{t})$.
Then the characters in $\Irr_{\ell'}(\tilde{G}) \cap \mathcal{E}(\tilde{G},\tilde{t})$ are in bijection with pairs $(\psi, \wt\eta)$, where $\psi\in\Irr_{\ell'}(\wt L)\cap \mathcal{E}(\wt L, \wt t)$, up to $\wt N_0$-conjugation, and $\wt\eta\in \Irr_{\ell'}(\wt{N}_{\psi}/\wt L)$. (See \cite[Prop.~7.3]{Ma07} and its adaption to $\wt{G}$ in \cite[Sec.~4]{CS13}. Note that $\psi$ in turn corresponds to some unipotent character in $\Irr_{\ell'}(\C_{\wt L^\ast}(\wt{t}))$.) Let $\chi_{\tilde{t},\tilde{\eta}}^{\psi}$ denote the character in $\Irr_{\ell'}(\tilde{G}) \cap \mathcal{E}(\tilde{G},\tilde{t})$ corresponding to $(\psi, \wt\eta)$.

On the other hand, we note that any member of $\Irr_{\ell'}(\N_{\wt{G}}(D)\wt L)$ must lie above a character $\wt\la\in\Irr_{\ell'}(\wt{L})^D$, applying Clifford correspondence. This shows that we have a bijection 
$$\wt\Omega\colon\Irr_{\ell'}(\tilde{G}) \to \Irr_{\ell'}(\N_{\tilde{G}}(D) \tilde{L})$$ given by
$$\chi_{\tilde{t},\tilde{\eta}}^{\psi} \mapsto \Ind_{\N_{\tilde{G}}(D)_\psi \wt L}^{\N_{\tilde{G}}(D) \wt L}(\tilde{\Lambda}(\psi) f_{{\psi}}(\wt\eta)).$$
By the equivariance properties of Jordan decomposition in the connected center case (see \cite[Thm.~3.1]{CS13} for the case of group automorphisms and the main result of \cite{SV20} for $\gal$, hence $\galh_0$) and the equivariance properties of $\tilde{\Lambda}$ and $f_{\tilde{\lambda}}$, it follows that this bijection is $(\Irr(\tilde{G}/G) \rtimes (\Aut(\tilde{G})_{D} \times \mathcal{H}_0))$-equivariant. 
\end{proof}

We now complete the proof of our main results.

\begin{proof}[Proof of Theorem \ref{thm:main}]
By Theorem \ref{thm:reduction}, it suffices to show that the inductive Galois--McKay condition holds with respect to $\galh_0$ for each finite non-abelian simple group. Let $S$ be a nonabelian simple group with universal covering group $G$. By Lemmas \ref{lem:nonlie} and \ref{lem:altspor} and Corollary \ref{cor:typeA}, we may assume that $\ell$ is odd, $G=\bG^F$ with $(\bG, F)$ a finite reductive group of simply connected type defined over $\bar{\mathbb{F}}_p$ with $p\neq \ell$,  $\Z(G)$ is a nonexceptional Schur multiplier for $S$, $\bG$ is not of type $\tA$, and that $\N_G(D)\leq N$, where $N$ is as in Notation \ref{not:lie type} and $D\in\Syl_\ell(G)$. By Proposition \ref{prop:regular}, we may assume that $d:=d_\ell(q)$ is not regular for $(\bG, F)$. As before, let $G\lhd\wt{G}$, obtained by a regular embedding, and note  that $\ell\nmid[\wt{G}:G]$ by our assumption on $d$.

We proceed by induction. Namely, suppose that  the inductive Isaacs--Navarro condition holds for the simple groups involved in all groups whose order is smaller than the order of $G/\Z(G)$. 
Then by Theorem \ref{thm:induc}, we have an $(\Irr(\wt{G}/G)\rtimes(\Aut(\tilde{G})_D \times \mathcal{H}_0))$-equivariant bijection $\tilde{\Omega}:\Irr_{\ell'}(\tilde{G}) \to \Irr_{\ell'}(\tilde{L}\N_{\tilde{G}}(D))$. 
Let $\wt M:=\tilde{L}\N_{\tilde{G}}(D)$ and $M:=L\N_G(D)$. Note then that $\N_{\wt{G}}(M)=\wt{M}$, and we can apply \cite[Thm.~3.4, Cor.~3.5]{RSST} with this choice of $M$, noting that the required transversal conditions hold thanks to Lemmas \ref{lem:loc transversal} and \ref{lem:glob transversal}.
\end{proof}

\section{Corollaries \ref{cor:NTconj} and \ref{cor:Hungconj} and Further Remarks}\label{sec:corollaries}

As noted in the introduction, Corollaries \ref{cor:NTconj} and \ref{cor:Hungconj} follow from Theorem \ref{thm:main}. We discuss these further here.

Namely, the equivalence of the first two items in Corollary \ref{cor:NTconj} was shown to follow from the Isaacs--Navarro Galois conjecture in \cite[pp.~342]{IN} (see also \cite[Thm.~9.12]{Nav18}), which means that portion of Corollary \ref{cor:NTconj} now follows from Theorem \ref{thm:main}. 
As noted also in the introduction, that the third item implies the first was proven in \cite[Thm.~B]{NT19}.  Hence, combining \cite[Thm.~B]{NT19} and Theorem \ref{thm:main}, we obtain the full Corollary \ref{cor:NTconj}.

Similarly, \cite[Conj.~1.1 and Conj.~2.3]{hung} follow from the Isaccas--Navarro Galois conjecture, as can be seen from the proof of \cite[Thm.~2.5]{hung}. Hence Corollary \ref{cor:Hungconj} again follows from Theorem \ref{thm:main}.

We also remark that, thanks to \cite[Thm.~8.3]{hung} and our Corollary \ref{cor:Hungconj}, the final conjecture in \cite{hung}, namely \cite[Conj.~1.4]{hung} (which would give a lower bound on $|\Irr_{\ell'}(G)|$ in terms of $\mathrm{exp}(D/D')$ and $\ell$) would be a consequence of the open conjecture \cite[Conj.~B3]{NT21}. Further, as noted in \cite[pp.~12]{NT21}, the conclusion of \cite[Thm.~B1]{NT21} (which would be implied by \cite[Conj.~B3]{NT21})  now also holds in the special case that $D/D'$ is elementary abelian, thanks to Corollary \ref{cor:NTconj}.

We close by mentioning a conjecture of  Malle--Mart{\'i}nez--Vallejo, \cite[Conj.~B]{MMV25}, which gives an explanation of the so-called ``$\ell$-rationality gap"---the phenomenon that there exist finite groups $G$ such that every $\chi\in\Irr_{\ell'}(G)$ that is $\sigma_1$-stable is in fact $\ell$-rational.  Namely, \cite[Conj.~B]{MMV25} conjecturally classifies such groups.  While this conjecture was proved to follow from the McKay--Navarro conjecture in \cite[Cor.~5.6]{MMV25}, we remark that it does \emph{not} follow just from the $\galh_0$-version, i.e. it does not follow from our Theorem \ref{thm:main}. This is because the characters with $\ell$-rationality levels in $\{0, 1\}$ are not distinguished by $\galh_0$ alone.

\end{document}